\definecolor{red}{rgb}{1,0,0}
\newtheorem{thm}{Theorem}[section]
\newtheorem{cor}[thm]{Corollary}
\newtheorem{lem}[thm]{Lemma}
\newtheorem{prop}[thm]{Proposition}
\newtheorem{obs}[thm]{Observation}
\newtheorem{quest}[thm]{Question}
\newtheorem{rem}[thm]{Remark}
\theoremstyle{definition}
\newtheorem{defn}[thm]{Definition}
\theoremstyle{example}
\newtheorem{ex}[thm]{Example}
\def\mtx#1{\begin{bmatrix} #1 \end{bmatrix}}
\DeclareMathOperator{\epr}{epr}
\DeclareMathOperator{\rank}{rank}
\newcommand{\R}{\mathbb{R}}
\newcommand{\F}{\mathbb{F}}
\newcommand{\Fnn}{\F^{n\times n}}
\newcommand{\bit}{\begin{itemize}}
\newcommand{\eit}{\end{itemize}}
\newcommand{\ben}{\begin{enumerate}}
\newcommand{\een}{\end{enumerate}}
\newcommand{\beq}{\begin{equation}}
\newcommand{\eeq}{\end{equation}}
\newcommand{\bea}{\begin{eqnarray*}}
\newcommand{\eea}{\end{eqnarray*}}
\newcommand{\bpf}{\begin{proof}}
\newcommand{\epf}{\end{proof}\ms}
\newcommand{\bmt}{\begin{bmatrix}}
\newcommand{\emt}{\end{bmatrix}}
\newcommand{\ms}{\medskip}
\newcommand{\ba}{\begin{array}}
\newcommand{\ea}{\end{array}}
\DeclareMathOperator{\qpr}{qpr}
\DeclareMathOperator{\apr}{apr}
\DeclareMathOperator{\aprank}{ap-rank}
\begin{document}
\title{On the almost-principal minors of a symmetric matrix}
\author{
Shaun M. Fallat\thanks{Department of Mathematics and Statistics,
University of Regina,  Regina, Saskatchewan, S4S 0A2, Canada  (shaun.fallat@uregina.ca).}
\and
Xavier Mart\'inez-Rivera\thanks{Department of Mathematics and Statistics,
Auburn University, Auburn, AL 36849, USA (martinez.rivera.xavier@gmail.com).
}
}


\maketitle

\begin{abstract}
The almost-principal rank characteristic sequence (apr-sequence) of
an  $n\times n$ symmetric matrix is introduced, 
which is defined to be the string 
$a_1 a_2 \cdots a_{n-1}$, where $a_k$ is 
either {\tt A}, {\tt S}, or {\tt N},
according as all, some but not all, or none of
its almost-principal minors of order $k$ are nonzero.
In contrast to the other principal rank characteristic sequences in
the literature, the apr-sequence of a matrix does not
depend on principal minors.
The almost-principal rank of a symmetric matrix $B$,
denoted by $\aprank(B)$, is defined as the size of a
largest nonsingular almost-principal submatrix of $B$.
A complete characterization of the sequences not
containing an $\tt A$ that can be realized as
the apr-sequence of a symmetric matrix over a field $\F$ is provided.
A necessary condition for a sequence to
be the apr-sequence of a symmetric matrix over a field $\F$ is presented.
It is shown that if $B \in \Fnn$ is symmetric and non-diagonal, then
$\rank(B)-1 \leq \aprank(B) \leq \rank(B)$, with both bounds being sharp.
Moreover, it is shown that if $B$ is symmetric, non-diagonal and singular, and does not contain a zero row, then $\rank(B) = \aprank(B)$.
\end{abstract}

\noindent{\bf Keywords.}
Almost-principal minor; 
almost-principal rank characteristic sequence;
enhanced principal rank characteristic sequence;
symmetric matrix;
rank;
ap-rank.

\medskip

\noindent{\bf AMS subject classifications.}
15B57, 15A15,  15A03.

\section{Introduction}\label{s: intro}
$\null$
\indent
Motivated by work of Brualdi et al.\ \cite{P} on the
principal rank characteristic sequence (pr-sequence),
Butler et al.\ \cite{EPR} introduced the
enhanced principal rank characteristic sequence (epr-sequence),
which they defined as follows:
For a given symmetric matrix $B \in \Fnn$, where $\F$ is a field,
the \textit{enhanced principal rank characteristic sequence}
(epr-sequence) of $B$ is
$\epr(B) = \ell_1\ell_2 \cdots \ell_n$, where
\begin{equation*}
   \ell_k =
    \begin{cases}
             \tt{A} &\text{if all of the principal minors of order $k$ are nonzero;}\\
             \tt{S} &\text{if some but not all of the principal minors of order $k$ are nonzero;}\\
             \tt{N} &\text{if none of the principal minors of order $k$ are nonzero (i.e., all are zero),}
         \end{cases}
\end{equation*}
where a minor of {\em order} $k$ is the determinant of a
$k \times k$ submatrix of $B$.
(The definition of ``principal minor'' appears below.)
After subsequent work on epr-sequences
(see \cite{{EPR-Hermitian}, {skew}, {XMR-Classif}, {XMR-Char 2}}),
another sequence, one that refines the epr-sequence, called the
signed enhanced principal rank characteristic sequence (sepr-sequence),
was introduced by Mart\'inez-Rivera in \cite{XMR-sepr}.
Recently, Fallat and Mart\'inez-Rivera \cite{qpr} extended the definition of the epr-sequence by also taking into consideration the
almost-principal minors of the matrix,
leading them to a new sequence,
which we will define after introducing some terminology:
For  $B \in \Fnn$ and $\alpha, \beta \subseteq \{1,2, \dots, n\}$,
$B[\alpha, \beta]$ will denote the submatrix lying in
rows indexed by $\alpha$ and columns indexed by $\beta$;
$B[\alpha, \beta]$ is a {\em principal} submatrix of $B$ if
$\alpha = \beta$;
the minor $\det B[\alpha,\beta]$ is a {\em principal} minor of $B$ if
$B[\alpha, \beta]$ is a principal submatrix of $B$;
$B[\alpha, \beta]$ is an {\em almost-principal} submatrix of $B$ if
$|\alpha| = |\beta|$ and $|\alpha \cap \beta| = |\alpha|-1$;
the minor $\det B[\alpha,\beta]$ is an {\em almost-principal} minor of $B$ if
$B[\alpha, \beta]$ is an  almost-principal submatrix of $B$;
the minor $\det B[\alpha,\beta]$ is a {\em quasi-principal} minor of $B$ if
$B[\alpha, \beta]$ is a principal or an almost-principal submatrix of $B$;
we will say that an $n \times n$ matrix has {\em order} $n$;
a sequence $t_1t_2 \cdots t_{k}$ from
$\{\tt A,N,S\}$ is said to have {\em length} $k$.
As introduced in \cite{qpr},
for a given symmetric matrix $B \in \Fnn$, where $\F$ is a field,
the \textit{quasi principal rank characteristic sequence}
(qpr-sequence) of $B$ is
$\qpr(B)=q_1q_2\cdots q_n$, where
\[q_k=\left\{\begin{array}{ll}
{\tt A} &  \mbox{ if  all of the quasi-principal minors of order $k$ are nonzero;}\\
{\tt S} &   \mbox{ if some but not all of the quasi-principal minors of order $k$ are nonzero;}\\
{\tt N} &  \mbox{ if none of the quasi-principal minors of order $k$ are nonzero (i.e., all are zero).}\end{array}\right.\]

A necessary condition for a sequence to be the qpr-sequence of a
symmetric matrix over a field $\F$ was found in \cite{qpr}:

\begin{thm}\label{qpr-necessary condition}
{\rm \cite[Corollary 2.7]{qpr}}
Let $\F$ be a field and $q_1q_2 \cdots q_{n}$ be
a sequence from $\{\tt A,N,S\}$.
If $q_1q_2 \cdots q_{n}$ is the qpr-sequence of a
symmetric matrix $B \in \Fnn$, then the following statements hold:
\ben
\item[(i)]
$q_n \neq \tt S$.
\item[(ii)]
Neither $\tt NA$ nor $\tt NS$ is a subsequence of $q_1q_2 \cdots q_{n}$.
\een
\end{thm}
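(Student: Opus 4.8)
The plan is to treat (i) directly from the definition of a quasi-principal minor of order $n$, and to reduce (ii) to the single implication ``if $q_k = {\tt N}$ then $q_{k+1} = {\tt N}$'' (for $1 \le k \le n-1$), which is exactly the assertion that $q_kq_{k+1}$ is never $\tt NA$ or $\tt NS$.

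For (i): if $\alpha,\beta \subseteq \{1,\dots,n\}$ with $|\alpha| = |\beta| = n$, then $\alpha = \beta = \{1,\dots,n\}$, so the only $n \times n$ submatrix of $B$ is $B$ itself, which is principal, and there is no almost-principal submatrix of order $n$. Hence the set of quasi-principal minors of order $n$ is just $\{\det B\}$, so $q_n$ merely records whether a single scalar is zero, giving $q_n \in \{{\tt A},{\tt N}\}$; in particular $q_n \neq {\tt S}$.

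For (ii): assume $q_k = {\tt N}$, i.e.\ every quasi-principal submatrix of $B$ of order $k$ is singular, and let $B[\alpha,\beta]$ be an arbitrary quasi-principal submatrix of order $k+1$; the goal is to show $\det B[\alpha,\beta] = 0$. The key point is that whether $\alpha = \beta$ (principal case) or $|\alpha \setminus \beta| = 1$ (almost-principal case), there is an index $i \in \alpha$ with $\alpha \setminus \{i\} \subseteq \beta$. Setting $\delta := \alpha \setminus \{i\}$, so $|\delta| = k$ and $\delta \subseteq \beta$, I would expand $\det B[\alpha,\beta]$ along the row indexed by $i$, obtaining $\det B[\alpha,\beta] = \sum_{m \in \beta} \pm\, B_{im}\,\det B[\delta,\beta\setminus\{m\}]$. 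For each $m \in \beta$ the submatrix $B[\delta,\beta\setminus\{m\}]$ is $k \times k$ and its row- and column-index sets intersect in a set containing $\delta \setminus \{m\}$, hence of size at least $k-1$; so $B[\delta,\beta\setminus\{m\}]$ is itself a quasi-principal submatrix of $B$ of order $k$ (principal when $m \notin \delta$, in which case $\beta\setminus\{m\} = \delta$; almost-principal when $m \in \delta$), and therefore $\det B[\delta,\beta\setminus\{m\}] = 0$ by hypothesis. Every summand vanishes, so $\det B[\alpha,\beta] = 0$; since $B[\alpha,\beta]$ was arbitrary, $q_{k+1} = {\tt N}$.

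I expect the only delicate point is the bookkeeping in the last step — checking that $B[\delta,\beta\setminus\{m\}]$ is genuinely quasi-principal for \emph{every} $m \in \beta$, including the unique $m \in \beta\setminus\delta$, for which the submatrix collapses to the principal submatrix $B[\delta]$. Everything else is a routine cofactor expansion, and notably the argument uses no properties of the field $F$, which matches the generality of the statement.
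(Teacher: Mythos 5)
Your proof is correct, and it is worth noting that the paper itself offers no proof of this statement to compare against: it is quoted from \cite[Corollary 2.7]{qpr}, where it is deduced from the stronger result reproduced here as Theorem \ref{qpr: N theorem} (if $q_k={\tt N}$ then $q_j={\tt N}$ for all $j\geq k$), and the surrounding discussion indicates that the route taken there leans on symmetric-matrix machinery, namely that the rank of a symmetric matrix is principal (Theorem \ref{thm: rank of a symm mtx}), together with the epr-sequence results. Your argument is genuinely different and self-contained: part (i) is the same trivial observation that the only order-$n$ quasi-principal minor is $\det B$, but for part (ii) you prove the one-step implication $q_k={\tt N}\Rightarrow q_{k+1}={\tt N}$ by a Laplace expansion of an arbitrary order-$(k{+}1)$ quasi-principal minor $\det B[\alpha,\beta]$ along the row indexed by the (well-chosen) element $i$ with $\alpha\setminus\{i\}\subseteq\beta$, and your case check that every cofactor $B[\delta,\beta\setminus\{m\}]$ is again quasi-principal (principal exactly when $m$ is the unique element of $\beta\setminus\delta$, almost-principal when $m\in\delta$) is the crux and is done correctly. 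What your approach buys: it never uses symmetry of $B$ nor any rank or Schur-complement considerations, so it proves the stronger statement for arbitrary square matrices over any field, and iterating the one-step implication recovers the full Theorem \ref{qpr: N theorem}. What it does not give by itself is the structural information the paper's cited machinery provides (e.g.\ the interplay with principal rank used elsewhere in the paper), but as a proof of the stated theorem it is complete and, if anything, more elementary than the source it replaces.
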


The necessary condition in Theorem \ref{qpr-necessary condition} was
shown to be sufficient if $\F$ is of characteristic $0$:

\begin{thm}\label{qpr-char 0}
{\rm \cite[Theorem 3.7]{qpr}}
Let $\F$ be a field of characteristic $0$.
A sequence $q_1q_2 \cdots q_n$ from $\{\tt A,N,S\}$
is the qpr-sequence of a symmetric matrix $B \in \Fnn$
if and only if
the following statements hold:
\ben
\item[(i)]
$q_n \neq \tt S$.
\item[(ii)]
Neither $\tt NA$ nor $\tt NS$ is a subsequence of $q_1q_2 \cdots q_n$.
\een
\end{thm}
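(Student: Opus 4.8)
The ``only if'' direction is precisely Theorem~\ref{qpr-necessary condition}, so the content is the ``if'' direction: given a sequence $q_1q_2\cdots q_n$ from $\{\tt A,\tt N,\tt S\}$ satisfying (i) and (ii), we must exhibit a symmetric $B\in\Fnn$ with $\qpr(B)=q_1q_2\cdots q_n$. By (ii), once an $\tt N$ occurs every later symbol is $\tt N$, so $q_1q_2\cdots q_n=\sigma_1\sigma_2\cdots\sigma_r\,\tt N\tt N\cdots\tt N$ with each $\sigma_i\in\{\tt A,\tt S\}$ and, by (i), $\sigma_r=\tt A$ whenever $r=n$. Since $\chr F=0$ we have $\Q\subseteq F$, and a symmetric rational matrix realizing the sequence realizes it over $F$ (vanishing and non‑vanishing of minors are preserved), so it suffices to work over $\R$. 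We will use the standard fact that if $p_1,\dots,p_N$ are polynomials in finitely many real variables, none identically zero, then there is a real point at which all of them are nonzero (each is nonzero off a null set). Thus, to realize the target sequence it is enough to produce a parametrized family of symmetric matrices in which, for every order $k$, the quasi‑principal minors that should be nonzero are not identically zero as functions of the parameters, while those that should be zero vanish identically; a single generic element of a suitable variety of low‑rank symmetric matrices does not seem to suffice for the mixed $\tt A/\tt S$ patterns, so we proceed in two steps.

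\emph{Step 1: realizing $\tt N$-free sequences.} We first realize every sequence $\sigma_1\cdots\sigma_n$ with all $\sigma_i\in\{\tt A,\tt S\}$ (so, by (i), $\sigma_n=\tt A$), necessarily by a nonsingular matrix, by induction on $n$; the cases $n\le 2$ are handled by $[1]$, a generic $2\times2$ symmetric matrix, and $I_2$. For the step, if $\sigma_{n-1}=\tt A$, take a realization $C\in\R^{(n-1)\times(n-1)}$ of $\sigma_1\cdots\sigma_{n-1}$ from the induction hypothesis and border it generically, $B=\mtx{C & \bv\\ \bv^\top & d}$. The order‑$k$ quasi‑principal minors of $B$ whose index set lies in $\{1,\dots,n-1\}$ are exactly those of $C$; the new ones, involving index $n$, depend on $\bv,d$ affinely or through a nonzero quadratic form, and at an $\tt A$-position $k$ of $C$ they are not identically zero. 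The point here is that $\sigma_k=\tt A$ forces every order‑$(k-1)$ principal submatrix of $C$ to have corank at most $1$, which is exactly what prevents the relevant Schur‑complement expressions and Cauchy--Binet sums from collapsing; hence $\qpr(B)=\sigma_1\cdots\sigma_{n-1}\tt A$. If $\sigma_{n-1}=\tt S$, take instead $C'\in\R^{(n-1)\times(n-1)}$ realizing $\sigma_1\cdots\sigma_{n-2}\tt A$, pick $\alpha$ with $|\alpha|=n-2$ and $C'[\alpha]$ nonsingular (possible since $\sigma_{n-2}\neq\tt N$), and border $C'$ so that the new row, on the positions of $\alpha$, lies in the column space of $C'[\alpha]$, with the new diagonal entry chosen accordingly and the remaining new entry free. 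This makes one order‑$(n-1)$ principal minor vanish while $\det C'$ stays nonzero (so $q_{n-1}=\tt S$), keeps $\det B\neq0$ for generic values of the free parameter, and leaves the lower orders intact, so $\qpr(B)=\sigma_1\cdots\sigma_{n-2}\tt{SA}$.

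\emph{Step 2: appending the $\tt N$-tail.} Suppose now $r<n$. If $\sigma_r=\tt A$, let $C\in\R^{r\times r}$ be a nonsingular matrix with $\qpr(C)=\sigma_1\cdots\sigma_r$ from Step~1 and set $B=\mtx{C & CV\\ V^\top C & V^\top CV}$ for a generic $V\in\R^{r\times(n-r)}$. Then $B=\mtx{I_r\\ V^\top}C\mtx{I_r & V}$ has rank $r$, so $q_k(B)=\tt N$ for $k>r$; the order‑$k$ quasi‑principal minors of $B$ with index set in $\{1,\dots,r\}$ are those of $C$, and the straddling ones are not identically zero in $V$ at the $\tt A$-positions precisely because $C$ is nonsingular, so $\qpr(B)=\sigma_1\cdots\sigma_r\tt N^{\,n-r}$. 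If $\sigma_r=\tt S$, take instead a nonsingular $C'\in\R^{r\times r}$ with $\qpr(C')=\sigma_1\cdots\sigma_{r-1}\tt A$ and set $B=ZC'Z^\top$, where $Z\in\R^{n\times r}$ contains an $I_r$ block (so $\rank B=r$, giving the $\tt N$-tail), one further row chosen so that the corresponding order‑$r$ principal submatrix of $B$ is singular (so $q_r=\tt S$, since $C'$ still occurs as a nonsingular order‑$r$ principal submatrix), and generic remaining rows; the lower‑order pattern of $C'$ is again inherited and the straddling minors stay nonzero at the $\tt A$-positions.

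The main obstacle, common to all of the constructions above, is the bookkeeping needed to confirm that no deliberately imposed zero or rank constraint has an unintended side effect: forcing a quasi‑principal submatrix of order $k$ to be singular must not force some quasi‑principal minor of another order — or another order‑$k$ quasi‑principal minor meant to be nonzero — to vanish, and it must not change the global rank away from $r$ (which would corrupt the $\tt N$-tail in one direction or an $\tt A$ in the other). Concretely this reduces to checking that various explicit polynomials — Schur complements of bordered matrices, and Cauchy--Binet expansions of the straddling minors, restricted to the affine slices cut out by the constraints — are not identically zero; this is exactly where the characteristic‑$0$ hypothesis is used, through the genericity argument, and some restriction on the characteristic is genuinely needed here, as for the epr- and sepr-sequence analogues.
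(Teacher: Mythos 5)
Your overall strategy coincides with the one behind the cited result \cite[Theorem~3.7]{qpr} (and with the probabilistic machinery this paper deploys in Section~\ref{s: char 0}): the ``only if'' half is Theorem~\ref{qpr-necessary condition}; for ``if'' one writes the sequence as an $\{\tt A,\tt S\}$-prefix followed by an $\tt N$-tail, realizes the prefix inductively by bordering with a (constrained or unconstrained) generic row and column, and appends the tail by a rank-preserving bordering, invoking the fact that finitely many not-identically-zero polynomials over an infinite field have a common nonvanishing point. So the route is not the problem; the problem is that the not-identically-zero claims are asserted exactly where they are the entire content of the proof.

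Concretely: (a) in the unconstrained bordering step, a new order-$k$ quasi-principal minor with $n$ in both index sets has the form $d\,\det M-\;v[\delta']^{T}\operatorname{adj}(M)\,v[\gamma']$ with $M=C[\gamma',\delta']$ a principal \emph{or almost-principal} $(k-1)\times(k-1)$ submatrix of $C$; your stated justification (``$\sigma_k={\tt A}$ forces every order-$(k-1)$ \emph{principal} submatrix of $C$ to have corank at most $1$'') is not the statement you need. One needs corank $\le 1$ for the almost-principal blocks $M$ as well (this does follow from $\sigma_k=\tt A$ together with Lemma~\ref{rank when deleting}, by extending $M$ to a nonsingular order-$k$ quasi-principal submatrix), and, when $\det M=0$, one must still argue that $v[\delta']^{T}\operatorname{adj}(M)v[\gamma']$ is not identically zero (it is not, since $\operatorname{adj}(M)$ then has rank one, so the form factors as a product of two nonzero linear forms); none of this appears in your write-up. (b) More seriously, in both places where you deliberately impose a zero --- the $\sigma_{n-1}=\tt S$ bordering of Step~1, where $d$ is forced to equal $v[\alpha]^{T}C'[\alpha]^{-1}v[\alpha]$, and the $\sigma_r=\tt S$ case of Step~2, where one row of $Z$ is constrained --- the genericity argument has to be run on the constrained parameter slice: you must show that every minor required to be nonzero at an $\tt A$-position, as well as the top-order determinant or rank condition, remains a not-identically-zero polynomial \emph{after} substituting the constraint. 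You yourself identify this as ``the main obstacle'' and then leave it untouched; but this verification is precisely where the published argument does its work (compare the choice $Y=\SPAN(b_2,\dots,b_{n-1})$ and the case analysis via Lemmas~\ref{rank when appending} and \ref{rank when appending same row and column} in the proof of Theorem~\ref{No Ns nonsingular}: the imposed zero is arranged to hold identically on the restricted space, and each remaining minor is then checked case by case on that space). As it stands, your proposal is a correct plan accompanied by an acknowledgement that the decisive step is unverified, so it does not yet constitute a proof.
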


Theorem \ref{qpr-char 0} establishes a contrast between
the epr-sequences and qpr-sequences of symmetric matrices,
since a complete characterization such as the one in
Theorem \ref{qpr-char 0} for epr-sequences when
the field $\F$ is not the field of order $2$ is not yet known
(see \cite{XMR-Char 2}).
The absence of such a characterization for epr-sequences is due to
the difficulty in understanding epr-sequences containing
$\tt NA$ or $\tt NS$ as subsequences.
However, in the case of qpr-sequences, this difficulty was overcome,
since Theorem \ref{qpr-necessary condition} states that neither
$\tt NA$ nor $\tt NS$ can occur as a subsequence of
the qpr-sequence of a symmetric matrix \cite{qpr},
regardless of the field;
this raises a question:

\begin{quest}\label{qpr question}
\rm
Should we attribute the fact that neither
$\tt NA$ nor $\tt NS$ can occur as a subsequence of
the qpr-sequence of a symmetric matrix entirely to
the dependence of qpr-sequences on almost-principal minors?
\end{quest}

Question \ref{qpr question}, together with
the applications that almost-principal minors find
in numerous areas, which include
algebraic geometry, statistics, theoretical physics and matrix theory
\cite{qpr} (see, for example, \cite{{Stu17}, {Vanishing Minor Conditions}, {InvM3}, {KP14}, {Stu09}, {Stu16}, {InvM1}}),
is motivation for introducing
the almost-principal rank and
the almost-principal rank characteristic sequence
of a symmetric matrix, which is the focus of this paper:

\begin{defn}{\rm
Let $B \in \Fnn$ be symmetric.
The \textit{almost-principal rank} of $B$, denoted by
$\aprank(B)$, is
\[\aprank(B) :=
\max \{ |\alpha| :
\det (B[\alpha, \beta]) \neq 0, \
|\alpha| = |\beta|
\mbox{ \  and  \ }
|\alpha \cap \beta| = |\alpha|-1\}
\]
(where the maximum over the empty set is defined to be 0).
}\end{defn}

We note that, by definition,
the ap-rank of a $1 \times 1$ matrix is $0$.

\begin{defn}{\rm
For $n \geq 2$,
the {\em almost-principal rank characteristic sequence} of a
symmetric matrix $B\in\Fnn$ is the sequence (apr-sequence)
$\apr(B)=a_1a_2\cdots a_{n-1}$, where
\[a_k=\left\{\begin{array}{ll}
{\tt A} &  \mbox{ if all of the almost-principal minors of order $k$ are nonzero;}\\
{\tt S} &   \mbox{ if some but not all of the almost-principal minors of order $k$ are nonzero;}\\
{\tt N} &  \mbox{ if none of the almost-principal minors of order $k$ are nonzero (i.e., all are zero).}\end{array}\right.\]
}\end{defn}
Some observations highlighting the contrast between apr-sequences and
pr-, epr-, sepr- and qpr-sequences are now in order:
Unlike the other sequences, by definition,
apr-sequences do not depend on principal minors;
moreover, whether or not a matrix is nonsingular is not
revealed by its apr-sequence;
the apr-sequence of a symmetric matrix $B \in \Fnn$ has length $n-1$
---while the epr-, sepr- and qpr-sequence each has length $n$;
furthermore, unlike epr- and qpr-sequences,
apr-sequences may end with $\tt S$.
Another observation is that
the apr-sequence of a $1 \times 1$ matrix is simply the empty list (or word),
and, therefore, wherever the apr-sequence of an
$n \times n$ matrix is involved, we assume that $n \geq 2$.

In the remainder of the present section,
some of the terminology we adopted is introduced,
known results that are used frequently are listed,
and facts about apr-sequences that
will serve as tools in subsequent sections are established.
In Section \ref{s: apr-sequence}, in particular,
we establish a result analogous to Theorem \ref{NN Thm for epr} below
(the $\tt NN$ Theorem for epr-sequences from \cite{EPR}),
as well as a necessary condition for a sequence not containing an $\tt A$
to be the apr-sequence of a symmetric matrix (over an arbitrary field).
Section \ref{s: No As} is devoted mostly to 
apr-sequences not containing an $\tt A$, 
which are completely characterized (for an arbitrary field) in 
Theorem \ref{No As}, and concludes by 
providing a necessary condition for a  sequence 
(from $\{\tt A,N,S\}$) to be the apr-sequence of a symmetric matrix 
(over an arbitrary field).
Section \ref{s: ap-rank} is focused on the ap-rank of a symmetric matrix (over an arbitrary field), where it is shown, in particular, that
for a symmetric non-diagonal singular matrix $B$ not containing a zero row,
$\rank(B) = \aprank(B)$.
Section \ref{s: final}  has concluding remarks, including 
an answer to Question \ref{qpr question}.

In what follows, unless otherwise stated,
$\F$ is used to denote an arbitrary field.
Given a vector $x$ of length $n$,
$x[\alpha]$ denotes the subvector of $x$ with
entries indexed by $\alpha \subseteq \{1,2, \dots, n\}$.
If the sequence $a_1a_2 \cdots a_{n-1}$ from $\{\tt A,N,S\}$ is
the apr-sequence of a symmetric matrix over $\F$, then
we will say that the sequence is {\em attainable} over $\F$
(or simply that the sequence is {\em attainable},
if what is meant is clear from the context).
Given a sequence $t_{i_1}t_{i_2} \cdots t_{i_k}$,
$\overline{t_{i_1}t_{i_2} \cdots t_{i_k}}$ indicates that
the sequence may be repeated as many times as desired
(or it may be omitted entirely).
The matrices $B$ and $C$ are said to be
{\em permutationally similar} if there exists a
permutation matrix $P$ such that $C=P^TBP$.
If replacing each of the nonzero entries of a
matrix $P$ with a $1$ results in a permutation matrix,
then we will say that $P$ is a \textit{generalized permutation} matrix.
The zero matrix, identity matrix and all-$1$s matrix of order $n$
is denoted with $O_n$, $I_n$ and $J_n$, respectively;
moreover, $O_0$, $I_0$ and $J_0$ are understood to be vacuous.
The block diagonal matrix with the matrices $B$ and $C$ on
the diagonal (in that order) is denoted by $B \oplus C$.

\subsection{Known results}
$\null$
\indent
In this section, known results that are used frequently are listed,
of which some have been assigned abbreviated nomenclature.
We start with a well-known fact (see \cite{BIRS13}, for example),
which states that the rank of a symmetric matrix $B$ is equal to the order
of a largest nonsingular principal submatrix of $B$;
because of this, we will call the rank of a symmetric matrix \textit{principal}.

\begin{thm}
\label{thm: rank of a symm mtx}
{\rm \cite[Theorem 1.1]{BIRS13}}
Let $B \in \Fnn$ be symmetric.
Then $\rank(B) = \max \{ |\gamma| : \det (B[\gamma]) \neq 0 \}$.
\end{thm}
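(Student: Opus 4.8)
The plan is to establish the claimed equality by proving the two inequalities $\max\{|\gamma| : \det(B[\gamma]) \neq 0\} \leq \rank(B)$ and $\rank(B) \leq \max\{|\gamma| : \det(B[\gamma]) \neq 0\}$ separately. Everything used is linear algebra valid over an arbitrary field $F$, so no restriction on $\chr(F)$ is needed. The first inequality is immediate: if $\gamma \subseteq \{1,2,\dots,n\}$ has $|\gamma| = k$ and $\det(B[\gamma]) \neq 0$, then $B$ possesses a nonsingular $k \times k$ submatrix, whence $\rank(B) \geq k$; maximizing over all such $\gamma$ gives the bound.

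For the reverse inequality, put $r = \rank(B)$ and pick $\alpha \subseteq \{1,2,\dots,n\}$ with $|\alpha| = r$ such that the rows of $B$ indexed by $\alpha$ form a basis of $\RS(B)$ (possible since $\dim \RS(B) = r$); then $B[\alpha,\{1,2,\dots,n\}]$ has rank $r$. This is where symmetry is exploited: because $B^T = B$, we have $B[\{1,2,\dots,n\},\alpha] = B[\alpha,\{1,2,\dots,n\}]^T$, so the $r$ columns of $B$ indexed by $\alpha$ are linearly independent and, as $\rank(B) = r$, form a basis of $\CS(B)$. Hence every column of $B$ is a linear combination of the columns of $B$ indexed by $\alpha$; restricting each such relation to the rows indexed by $\alpha$ shows that every column of $B[\alpha,\{1,2,\dots,n\}]$ is a linear combination of the columns of $B[\alpha,\alpha]$. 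Therefore $\rank(B[\alpha,\alpha]) \geq \rank(B[\alpha,\{1,2,\dots,n\}]) = r$, and since $B[\alpha,\alpha]$ is $r \times r$ it must be nonsingular, giving $\max\{|\gamma| : \det(B[\gamma]) \neq 0\} \geq r = \rank(B)$.

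The one substantive point, and the step I expect to be the main obstacle, is the reverse inequality: the $r \times r$ nonsingular submatrix handed to us by $\rank(B) = r$ need not be principal, and over a general field one cannot appeal to positive-semidefiniteness, a Gram/Cholesky factorization, or eigenvalue arguments to repair this. The argument above sidesteps that difficulty by using the identity $B[\{1,2,\dots,n\},\alpha] = B[\alpha,\{1,2,\dots,n\}]^T$ to transfer linear independence from a chosen set of rows to the identically indexed set of columns, which is precisely what forces the principal submatrix $B[\alpha,\alpha]$ to have full rank $r$.
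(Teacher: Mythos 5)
Your proof is correct. There is nothing in the paper to compare it against: Theorem \ref{thm: rank of a symm mtx} is quoted as a known result with a citation to \cite[Theorem 1.1]{BIRS13}, and no proof is given in the text. Your argument --- the trivial inequality $\max\{|\gamma| : \det B[\gamma]\neq 0\}\le\rank(B)$, followed by choosing $\alpha$ with $|\alpha|=r$ whose rows form a basis of $\RS(B)$, using $B^T=B$ to transfer their independence to the identically indexed columns, and then restricting the column relations to the rows in $\alpha$ to force $\rank(B[\alpha,\alpha])=r$ --- is the standard, completely field-independent proof of this classical fact, and it correctly avoids any appeal to definiteness or eigenvalues; it also handles the degenerate case $B=O_n$ via the empty-maximum convention.
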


For a given matrix $B$ having a nonsingular principal
submatrix $B[\gamma]$, we denote by $B/B[\gamma]$ the
Schur complement of $B[\gamma]$ in $B$ (see \cite{Schur}).
The following result is also a well-known fact
(see \cite{Brualdi & Schneider}).

\begin{thm}
\label{schur}
{\rm (Schur Complement Theorem.)}
Let $B \in \Fnn$ be symmetric with
$\rank(B)=r$.
Let $B[\gamma]$ be a nonsingular principal
submatrix of $B$ with $|\gamma| = k \leq r$,
and let $C = B/B[\gamma]$.
Then the following statements hold: 
\begin{enumerate}
\item [$(i)$]\label{p1SC}
$C$ is an $(n-k)\times (n-k)$ symmetric matrix. 
\item [$(ii)$]\label{p2SC}
Assuming the indexing of $C$ is inherited from $B$,
any minor of $C$ is given by 
\[ \det C[\alpha, \beta] = \det B[\alpha \cup \gamma, \beta \cup \gamma]/ \det B[\gamma].\]
\item [$(iii)$]\label{p3SC} $\rank(C) = r-k$.
\end{enumerate}
\end{thm}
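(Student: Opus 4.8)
The plan is to reduce all three parts to the classical block factorization of a matrix with an invertible leading block. First I would use permutation similarity: pick a permutation matrix $P$ sending the indices of $\gamma$ to the leading $k$ positions; then $P^TBP$ is symmetric, has the same rank $r$ as $B$, its leading $k\times k$ principal submatrix is $B[\gamma]$, and its (quasi-)principal minors are exactly those of $B$ up to sign. Hence it suffices to treat the case $\gamma=\{1,2,\dots,k\}$, in which we write $B=\bmt B_{11} & B_{12}\\ B_{21} & B_{22}\emt$ with $B_{11}=B[\gamma]$ nonsingular and $C=B/B[\gamma]=B_{22}-B_{21}B_{11}^{-1}B_{12}$. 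For part $(i)$, $C$ has size $(n-k)\times(n-k)$ since $B_{22}=B[\gamma^c]$ does (where $\gamma^c=\{1,\dots,n\}\setminus\gamma$), and transposing gives $C^T=B_{22}^T-B_{12}^T(B_{11}^{-1})^TB_{21}^T$; using the symmetry of $B$ (so $B_{22}^T=B_{22}$, $B_{12}^T=B_{21}$, $B_{21}^T=B_{12}$) together with $(B_{11}^{-1})^T=(B_{11}^T)^{-1}=B_{11}^{-1}$, this equals $C$.

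For part $(ii)$, fix $\alpha,\beta\subseteq\gamma^c$ with $|\alpha|=|\beta|$ and arrange the rows of $B[\alpha\cup\gamma,\beta\cup\gamma]$ with the $\gamma$-indices first and the columns likewise, so that it takes the block form $\bmt B[\gamma] & B[\gamma,\beta]\\ B[\alpha,\gamma] & B[\alpha,\beta]\emt$. Left-multiplying by the unit lower-triangular block matrix $\bmt I & 0\\ -B[\alpha,\gamma]B[\gamma]^{-1} & I\emt$ produces $\bmt B[\gamma] & B[\gamma,\beta]\\ 0 & B[\alpha,\beta]-B[\alpha,\gamma]B[\gamma]^{-1}B[\gamma,\beta]\emt$, whose determinant is $\det B[\gamma]\cdot\det\bigl(B[\alpha,\beta]-B[\alpha,\gamma]B[\gamma]^{-1}B[\gamma,\beta]\bigr)$. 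Now $B[\alpha,\beta]-B[\alpha,\gamma]B[\gamma]^{-1}B[\gamma,\beta]$ is precisely the submatrix $C[\alpha,\beta]$ of $C=B_{22}-B_{21}B_{11}^{-1}B_{12}$ once $C$ is indexed by $\gamma^c$ as in the statement; dividing by $\det B[\gamma]\neq 0$ (and absorbing the sign incurred by the reordering of the combined index sets into the stated indexing convention) yields $\det C[\alpha,\beta]=\det B[\alpha\cup\gamma,\beta\cup\gamma]/\det B[\gamma]$.

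For part $(iii)$, I would apply the full two-sided factorization
\[
\bmt I & 0\\ -B_{21}B_{11}^{-1} & I\emt B \bmt I & -B_{11}^{-1}B_{12}\\ 0 & I\emt = \bmt B_{11} & 0\\ 0 & C\emt = B[\gamma]\oplus C,
\]
obtained by the computation of part $(ii)$ with $\alpha=\beta=\gamma^c$ followed by a symmetric column operation. Since the two outer matrices are invertible, $\rank(B)=\rank(B[\gamma]\oplus C)=\rank(B[\gamma])+\rank(C)=k+\rank(C)$, whence $\rank(C)=r-k$. (Alternatively, $(iii)$ can be deduced from $(i)$, $(ii)$ and Theorem~\ref{thm: rank of a symm mtx}, but that route needs the extra fact that some maximal nonsingular principal submatrix of $B$ can be chosen to contain $\gamma$, which the factorization avoids.)

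There is no deep obstacle here — this is a classical determinantal identity — so the only real care needed is the bookkeeping: justifying the reduction to $\gamma=\{1,\dots,k\}$ via permutation similarity, and tracking the orderings of the combined index sets $\alpha\cup\gamma$, $\beta\cup\gamma$ that enter the clause ``assuming the indexing of $C$ is inherited from $B$'' in part $(ii)$. That sign/ordering bookkeeping is the step I would be most careful to state precisely.
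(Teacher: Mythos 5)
The paper offers no proof of this theorem: it is stated as a well-known fact and attributed to the literature (Brualdi--Schneider and Zhang's book on the Schur complement), so there is nothing internal to compare your argument with. Your proof is the classical one --- permutation similarity to put $\gamma$ in the leading positions, the symmetry computation for (i), block elimination for (ii), and the congruence $B\mapsto B[\gamma]\oplus C$ for (iii) --- and it is sound; your observation that (iii) is cleaner via the two-sided factorization than via Theorem \ref{thm: rank of a symm mtx} is also correct. The one point to sharpen is exactly the one you flagged: the reordering sign cannot simply be ``absorbed into the stated indexing convention.'' With the usual convention that rows and columns of a submatrix are listed in increasing order, your elimination gives $\det B[\alpha\cup\gamma,\beta\cup\gamma]=\epsilon\,\det B[\gamma]\,\det C[\alpha,\beta]$ with $\epsilon=\pm1$ determined by how $\gamma$ interlaces $\alpha$ and how it interlaces $\beta$, and $\epsilon$ can genuinely be $-1$ when $\alpha\neq\beta$: for $n=3$, $\gamma=\{2\}$, $\alpha=\{1\}$, $\beta=\{3\}$ one has $\det C[\{1\},\{3\}]=b_{13}-b_{12}b_{23}/b_{22}$, while $\det B[\{1,2\},\{2,3\}]/\det B[\{2\}]=(b_{12}b_{23}-b_{13}b_{22})/b_{22}$ is its negative. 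So the displayed identity in (ii) holds exactly for principal minors ($\alpha=\beta$) but only up to a nonzero sign in general --- which is all the paper ever needs, since the theorem is invoked solely to decide whether quasi-principal minors vanish; to get the formula literally as written you should either fix the ordering of $\alpha\cup\gamma$ and $\beta\cup\gamma$ with the $\gamma$-indices first or state the identity up to sign.
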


Some necessary results about epr-sequences are listed now.
The following theorem, which appears in \cite{EPR},
follows readily from Jacobi's determinantal identity.

\begin{thm}\label{Inverse Thm}
{\rm \cite[Theorem 2.4]{EPR}}
{\rm (Inverse Theorem for epr-Sequences.)}
Let $B \in \Fnn$ be symmetric and nonsingular.
If
$\epr(B) = \ell_1 \ell_2 \cdots \ell_{n-1}\tt A$, then
$\epr(B^{-1}) = \ell_{n-1}\ell_{n-2} \cdots \ell_1 \tt A$.
\end{thm}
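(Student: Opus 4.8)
\emph{Proof proposal.}
The plan is to reduce everything to a single fact describing how principal minors transform under inversion, which comes from Jacobi's determinantal identity. Recall that for a nonsingular $B \in \Fnn$ and $\alpha, \beta \subseteq \{1,2,\dots,n\}$ with $|\alpha| = |\beta|$, Jacobi's identity states that
\[
\det\!\big(B^{-1}[\alpha,\beta]\big) \;=\; (-1)^{s(\alpha)+s(\beta)}\,\frac{\det B[\,\OL{\beta},\OL{\alpha}\,]}{\det B},
\]
where $\OL{\gamma}:=\{1,2,\dots,n\}\setminus\gamma$ and $s(\gamma):=\sum_{i\in\gamma} i$. Since an epr-sequence records only principal minors, I would only invoke the case $\alpha=\beta=\gamma$, where the sign $(-1)^{2s(\gamma)}$ equals $1$, giving the clean formula
\[
\det\!\big(B^{-1}[\gamma]\big) \;=\; \frac{\det B[\,\OL{\gamma}\,]}{\det B}\qquad\text{for every }\gamma\subseteq\{1,2,\dots,n\}.
\]

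Granting this, the argument is short. First I would observe that $B^{-1}$ is symmetric (as $B$ is), so that $\epr(B^{-1})$ is defined, and that $\det(B^{-1})=1/\det B\neq 0$, which forces the last term of $\epr(B^{-1})$ to be ${\tt A}$, matching the final ${\tt A}$ of the target sequence. Next, fix $k$ with $1\le k\le n-1$. The map $\gamma\mapsto\OL{\gamma}$ is a bijection from the $k$-subsets of $\{1,\dots,n\}$ onto the $(n-k)$-subsets, and, since $\det B\neq 0$, the displayed formula shows that a principal minor $\det(B^{-1}[\gamma])$ of order $k$ is nonzero if and only if the principal minor $\det B[\,\OL{\gamma}\,]$ of order $n-k$ is nonzero. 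Consequently ``all order-$k$ principal minors of $B^{-1}$ are nonzero'' is equivalent to ``all order-$(n-k)$ principal minors of $B$ are nonzero,'' ``none'' corresponds to ``none,'' and hence ``some but not all'' corresponds to ``some but not all.'' Thus the $k$-th term of $\epr(B^{-1})$ equals $\ell_{n-k}$, which is precisely the $k$-th term of $\ell_{n-1}\ell_{n-2}\cdots\ell_1{\tt A}$, completing the proof.

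I do not expect a genuine obstacle here; once Jacobi's identity is available the result is nearly immediate. The two points deserving a moment's care are: (a) checking that the sign in Jacobi's identity drops out in the principal case, which is why I would state and use only the $\alpha=\beta$ specialization rather than the general-minor version; and (b) the indexing, namely that the order-$n$ term of $\epr(B^{-1})$ is handled by nonsingularity rather than by the rule ``$k$-th term $=\ell_{n-k}$'' (there being no $\ell_0$), with the hypothesis $\ell_n={\tt A}$ serving merely as a restatement that $B$ is nonsingular. (Alternatively, one can avoid Jacobi's identity: when $B[\,\OL{\gamma}\,]$ is nonsingular the block-inverse identity $(B^{-1})[\gamma]=(B/B[\,\OL{\gamma}\,])^{-1}$ combined with part $(ii)$ of Theorem~\ref{schur} gives $\det(B^{-1}[\gamma])=\det B[\,\OL{\gamma}\,]/\det B$, and applying the same with $B$ replaced by $B^{-1}$ covers the case $\det B[\,\OL{\gamma}\,]=0$; either way one obtains $\det(B^{-1}[\gamma])\neq 0\iff\det B[\,\OL{\gamma}\,]\neq 0$, which is all that is needed.)
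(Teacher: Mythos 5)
Your proof is correct and follows exactly the route the paper indicates: this theorem is quoted from \cite{EPR} with the remark that it ``follows readily from Jacobi's determinantal identity,'' and your specialization of Jacobi's identity to principal minors (where the sign cancels), together with the complementation bijection $\gamma\mapsto\OL{\gamma}$ and handling the last term via nonsingularity of $B^{-1}$, is precisely that argument.
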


\begin{thm}
\label{NN Thm for epr}
{\rm \cite[Theorem 2.3]{EPR}}
{\rm ($\tt NN$ Theorem for epr-Sequences.)}
Let $B \in \Fnn$ be symmetric.
Suppose that $\epr(B) = \ell_1 \ell_2 \cdots \ell_{n}$ and
$\ell_{k} = \ell_{k+1} = \tt N$ for some $k$.
Then $\ell_j = \tt N$ for all $j \geq k$.
\end{thm}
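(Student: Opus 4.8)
\medskip

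\noindent\textbf{Proof proposal.}
The plan is to prove the conclusion by induction on $j \ge k$, with the cases $j = k$ and $j = k+1$ furnished directly by the hypothesis $\ell_k = \ell_{k+1} = {\tt N}$. For the inductive step, fix $j \ge k+2$ (note $k \ge 1$, so $j \ge 3$), assume $\ell_i = {\tt N}$ for every $i$ with $k \le i < j$, and suppose toward a contradiction that $\ell_j \ne {\tt N}$. Then $B$ has a nonsingular principal submatrix of order $j$, say $M := B[\alpha]$ with $|\alpha| = j$ and $\det M \ne 0$.

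The first point to record is that any principal submatrix of $M$ of order $t$ is a principal submatrix of $B$ of order $t$ (it is $B[\gamma]$ for some $\gamma \subseteq \alpha$); hence, writing $\epr(M) = \ell'_1 \ell'_2 \cdots \ell'_j$, we get $\ell'_t = {\tt N}$ whenever $\ell_t = {\tt N}$. Since $j-1$ and $j-2$ both lie in $[k,j)$ (and $j-2 \ge 1$ as $j \ge 3$), the induction hypothesis and base cases give $\ell'_{j-1} = \ell'_{j-2} = {\tt N}$, while $\ell'_j = {\tt A}$ because $M$ is nonsingular. Thus $\epr(M)$ ends in ${\tt NNA}$.

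Next I would invoke the Inverse Theorem for epr-sequences (Theorem~\ref{Inverse Thm}): from $\epr(M) = \ell'_1 \cdots \ell'_{j-1}{\tt A}$ with $\ell'_{j-1} = \ell'_{j-2} = {\tt N}$ it follows that $\epr(M^{-1}) = \ell'_{j-1}\ell'_{j-2}\cdots\ell'_1{\tt A}$ has its first two entries equal to ${\tt N}$. The first ${\tt N}$ forces every diagonal entry of $M^{-1}$ to vanish, and then every $2 \times 2$ principal minor of $M^{-1}$ has the form $\det\!\bmt 0 & c \\ c & 0 \emt = -c^2$; the second ${\tt N}$ makes all of these zero, so $c = 0$ for every off-diagonal entry $c$. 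Hence $M^{-1} = O_j$, contradicting the invertibility of $M$, and this contradiction yields $\ell_j = {\tt N}$, closing the induction. (If one prefers to avoid Theorem~\ref{Inverse Thm}, the passage from $M$ to $M^{-1}$ can be made directly through Jacobi's determinantal identity $\det M^{-1}[\gamma] = \det M[\alpha \setminus \gamma]/\det M$, which converts the vanishing of all order-$(j-1)$ and order-$(j-2)$ principal minors of $M$ into the vanishing of all order-$1$ and order-$2$ principal minors of $M^{-1}$.)

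I do not anticipate a genuine obstacle: once one has the idea of inverting a hypothetical full-order nonsingular principal submatrix so that its smallest principal minors---which are forced to be zero by hypothesis---become the decisive ones, the remainder is bookkeeping. The only points requiring care are checking that $j-1$ and $j-2$ really fall within the inductive scope (and that $j \ge 3$, so that order-$(j-2)$ principal minors exist), together with the trivial remark that $c^2 = 0$ implies $c = 0$ over a field. A direct Schur-complement argument looks less convenient here, since \emph{all} small proper principal submatrices of the hypothetical $B[\alpha]$ are singular, leaving no natural pivot block to complement against---precisely the asymmetry that passing to the inverse removes.
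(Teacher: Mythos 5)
Your argument is correct, but there is nothing in this paper to compare it against: Theorem~\ref{NN Thm for epr} is quoted from \cite{EPR} (it is Theorem~2.3 there) and no proof of it is given here, so your proposal is necessarily a ``different route'' in the sense that it supplies a proof where the paper supplies a citation. The proof itself holds up: passing to a hypothetical nonsingular principal submatrix $M=B[\alpha]$ of order $j\ge k+2$ is legitimate because every order-$t$ principal minor of $M$ is an order-$t$ principal minor of $B$, so $\epr(M)$ ends in ${\tt NNA}$; then the Inverse Theorem for epr-sequences (or, as you note, Jacobi's identity directly, which avoids any appearance of circularity with \cite{EPR}, where the Inverse Theorem is Theorem~2.4 and is itself independent of the ${\tt NN}$ result) makes $\epr(M^{-1})$ begin with ${\tt NN}$, forcing $M^{-1}$ to have zero diagonal and, since each $2\times 2$ principal minor is $-c^2$ and fields have no nilpotents, zero off-diagonal entries as well --- a contradiction valid in every characteristic, including $2$. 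The index bookkeeping ($j\ge 3$, so orders $j-1$ and $j-2$ exist and lie in $[k,j)$) is also right. It is worth noting that your inversion trick is exactly the device this paper uses in its own analogous results: Case~1 of the proof of Theorem~\ref{thm:N implies all N} and Proposition~\ref{A...N and S...N} both exploit the fact that a nonsingular symmetric matrix whose largest almost-principal (respectively, off-diagonal) minors all vanish has a diagonal inverse, while the rest of the paper's apr-version is handled by Schur complements because there the small minors, not the large ones, are the ones assumed to vanish; your observation that inversion is the natural move when the vanishing data sits at the top orders is precisely the right instinct.
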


We now state some facts about qpr-sequences.

\begin{obs}
{\rm \cite[Observation 2.1]{qpr}}
\label{qpr rank}
Let $B \in \Fnn$ be symmetric.
Then $\rank(B)$ is equal to the index of 
the last {\tt A} or {\tt S} in $\qpr(B)$.
\end{obs}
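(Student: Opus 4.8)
The plan is to unpack the statement into two claims about $\qpr(B) = q_1 q_2 \cdots q_n$, where $r := \rank(B)$: namely that $q_r \in \{{\tt A},{\tt S}\}$, and that $q_k = {\tt N}$ for every $k > r$. Together these say precisely that the largest index $k$ with $q_k \neq {\tt N}$ is $r$, which is the assertion.

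First I would invoke Theorem \ref{thm: rank of a symm mtx}: since $B$ is symmetric and $\rank(B) = r$, there is a set $\gamma \subseteq \{1,2,\dots,n\}$ with $|\gamma| = r$ and $\det B[\gamma] \neq 0$. A principal minor is in particular a quasi-principal minor, so $B$ has a nonzero quasi-principal minor of order $r$; hence not all quasi-principal minors of order $r$ are zero, i.e., $q_r \in \{{\tt A},{\tt S}\}$.

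Next I would treat the orders exceeding $r$. For any $k > r$ and any $k \times k$ submatrix $B[\alpha,\beta]$ (with $|\alpha| = |\beta| = k$), one has $\rank B[\alpha,\beta] \leq \rank(B) = r < k$, so $\det B[\alpha,\beta] = 0$. In particular every quasi-principal minor of order $k$ vanishes, forcing $q_k = {\tt N}$. Combining this with the previous paragraph, the last entry of $\qpr(B)$ not equal to ${\tt N}$ sits at index $r$, as claimed. The extreme cases are handled by the same reasoning: if $r = n$ there are no indices beyond $r$ and the conclusion follows from the first step alone, while if $r = 0$ then $B = O_n$, every $q_k$ equals ${\tt N}$, and the index of the last ${\tt A}$ or ${\tt S}$ is $0$ under the convention that a maximum over the empty set is $0$.

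I do not expect a genuine obstacle: the result is an immediate consequence of Theorem \ref{thm: rank of a symm mtx} together with the elementary fact that the rank of a submatrix cannot exceed the rank of the ambient matrix. The only point meriting a word of care is the degenerate case $B = O_n$, where one must appeal to the empty-maximum convention so that the phrase ``index of the last ${\tt A}$ or ${\tt S}$'' is interpreted as $0$.
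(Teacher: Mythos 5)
Your proof is correct and follows exactly the expected route: the paper states this as a known result cited from \cite{qpr} without reproducing a proof, and the standard argument is precisely yours, combining Theorem \ref{thm: rank of a symm mtx} (the rank of a symmetric matrix is principal, so $q_r \neq {\tt N}$) with the fact that every submatrix of order exceeding $\rank(B)$ is singular (so $q_k = {\tt N}$ for $k > r$). Your handling of the degenerate case $r = 0$ via the empty-maximum convention is also consistent with the paper's conventions.
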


Since the rank of a symmetric matrix is principal,
it is not hard to show that a statement analogous to
Theorem \ref{NN Thm for epr} must hold for qpr-sequences;
however, something stronger does hold:
The next result from \cite{qpr} shows that the presence of a single $\tt N$ in
the qpr-sequence of a symmetric matrix $B \in \Fnn$ implies that the sequence has $\tt N$s from that point forward.
This result is of particular relevance later,
when we show that an analogous statement does not
hold for apr-sequences.

\begin{thm}\label{qpr: N theorem}
{\rm \cite[Theorem 2.6]{qpr}}
Let $B \in \Fnn$ be symmetric.
Suppose that $\qpr(B) = q_1 q_2 \cdots q_n$ and
$q_k = \tt N$ for some $k$.
Then $q_j = \tt N$ for all $j \geq k$.
\end{thm}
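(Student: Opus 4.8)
The plan is to deduce the statement from the $\tt NN$ Theorem for epr-sequences (Theorem~\ref{NN Thm for epr}). The case $k=n$ is vacuous, so I may assume $k\le n-1$, and hence $k+1\le n$. The key observation I would isolate first is the following elementary fact about a single symmetric matrix $M$ of order $m\ge 2$: every $(m-1)\times(m-1)$ submatrix of $M$ is obtained by deleting one row (say row $i$) and one column (say column $j$), and such a submatrix is \emph{principal} when $i=j$ and \emph{almost-principal} when $i\ne j$; moreover the $(i,j)$ cofactor of $M$ equals $\pm$ the determinant of the submatrix obtained by deleting row $j$ and column $i$. Consequently, the $(m-1)\times(m-1)$ quasi-principal minors of $M$ are, up to sign, exactly the entries of $\operatorname{adj}(M)$. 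In particular, if every $(m-1)\times(m-1)$ quasi-principal minor of $M$ is zero, then $\operatorname{adj}(M)=O_m$, and from $M\cdot\operatorname{adj}(M)=\det(M)\,I_m$ we conclude $\det(M)=0$.

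With this in hand the argument is short. Suppose $q_k=\tt N$, so that every principal and every almost-principal minor of $B$ of order $k$ is zero. Fix $\gamma\subseteq\{1,2,\dots,n\}$ with $|\gamma|=k+1$ and put $M=B[\gamma]$. Every $k\times k$ principal (respectively, almost-principal) submatrix of $M$ is itself a $k\times k$ principal (respectively, almost-principal) submatrix of $B$, hence has zero determinant; by the observation above, $\det B[\gamma]=\det M=0$. Since $\gamma$ was arbitrary, every principal minor of $B$ of order $k+1$ vanishes, and by hypothesis so does every principal minor of order $k$; thus the epr-sequence $\epr(B)=\ell_1\ell_2\cdots\ell_n$ satisfies $\ell_k=\ell_{k+1}=\tt N$. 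By the $\tt NN$ Theorem for epr-sequences (Theorem~\ref{NN Thm for epr}), $\ell_j=\tt N$ for all $j\ge k$, i.e.\ every principal minor of $B$ of order at least $k$ is zero; by Theorem~\ref{thm: rank of a symm mtx} this forces $\rank(B)\le k-1$. Finally, for every $j\ge k$ we have $j>\rank(B)$, so every $j\times j$ submatrix of $B$ is singular; in particular all quasi-principal minors of order $j$ are zero, that is, $q_j=\tt N$, which is the desired conclusion.

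The step I expect to be the real content is recognizing that the naive adjugate argument cannot simply be iterated: applying it to a $(k+2)\times(k+2)$ principal submatrix would require the vanishing of its off-diagonal cofactors, which are almost-principal minors of order $k+1$, and these are not known to be zero at that stage. This is exactly the gap that the $\tt NN$ Theorem for epr-sequences fills once one has manufactured the two consecutive $\tt N$s $\ell_k=\ell_{k+1}=\tt N$ in the epr-sequence; accordingly, the crux of the write-up is the clean passage from "$q_k=\tt N$" to "$\ell_k=\ell_{k+1}=\tt N$" via the adjugate identity, after which one quotes Theorems~\ref{NN Thm for epr} and~\ref{thm: rank of a symm mtx}.
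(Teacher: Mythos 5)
Your proposal is correct. Note that the paper does not actually prove this statement: it is quoted as a known result from \cite{qpr} (Theorem 2.6 there), so there is no in-paper proof to compare against. Your argument is a legitimate, self-contained derivation from results that are stated in the paper: the adjugate observation (the entries of $\operatorname{adj}(M)$ are, up to sign, exactly the quasi-principal minors of order $m-1$, so their vanishing forces $\det M=0$) correctly upgrades $q_k={\tt N}$ to $\ell_k=\ell_{k+1}={\tt N}$ in $\epr(B)$, after which the ${\tt NN}$ Theorem for epr-sequences and Theorem \ref{thm: rank of a symm mtx} give $\rank(B)\le k-1$, and hence every quasi-principal minor of order $j\ge k$ vanishes because every $j\times j$ submatrix has rank at most $\rank(B)<j$. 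The only blemish is cosmetic: what you call the $(i,j)$ cofactor (deleting row $j$ and column $i$) is really the $(i,j)$ entry of the adjugate, but since $M$ is symmetric (and since only the set of these minors matters), this transposition is immaterial. Your closing remark is also apt: the adjugate step alone cannot be iterated to higher orders, and it is precisely the epr ${\tt NN}$ Theorem (equivalently, principality of rank) that carries the conclusion from order $k+1$ onward.
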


We conclude this section with a lemma that is used repeatedly,
which is immediate from the fact that appending a row and column to a matrix of rank $r$ results in a matrix whose rank is at most $r+2$.

\begin{lem}
\label{rank when deleting}
Let $B \in \Fnn$ be nonsingular.
Then the rank of any $(n-1)\times (n-1)$ submatrix of $B$ is at least $n-2$.
\end{lem}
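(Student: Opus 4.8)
The plan is to obtain $B$ from the given $(n-1)\times(n-1)$ submatrix by re-inserting one row and one column, and to track how the rank can change under such insertions. Concretely, suppose the submatrix in question is $B' = B[\alpha,\beta]$ with $|\alpha| = |\beta| = n-1$, say $\alpha = \{1,\dots,n\}\setminus\{i\}$ and $\beta = \{1,\dots,n\}\setminus\{j\}$. First I would re-insert column $j$: this produces the $(n-1)\times n$ matrix $B[\alpha, \{1,\dots,n\}]$, whose column space is spanned by the columns of $B'$ together with one extra column, so its rank is at most $\rank(B') + 1$. Then I would re-insert row $i$: this produces $B$ itself, and since adding a single row enlarges the row space by at most one dimension, $\rank(B) \le \rank(B[\alpha,\{1,\dots,n\}]) + 1 \le \rank(B') + 2$.

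Combining these two steps with the hypothesis $\rank(B) = n$ gives $n \le \rank(B') + 2$, i.e.\ $\rank(B') \ge n-2$, which is exactly the claim. The argument works verbatim for any choice of deleted row index $i$ and column index $j$, so it applies to an arbitrary $(n-1)\times(n-1)$ submatrix, not merely a principal one; in particular no symmetry of $B$ is needed here.

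I do not anticipate a genuine obstacle: the only point requiring a modicum of care is the elementary rank inequality that appending a single row (or column) to a matrix raises its rank by at most $1$, which is immediate from the fact that the row space (resp.\ column space) of the enlarged matrix is spanned by the old rows (resp.\ columns) together with one new vector. If desired, one could phrase the whole thing in one line by noting that $B$ differs from the $n\times n$ matrix obtained by padding $B'$ with a zero row and zero column in positions $i$ and $j$ by a matrix of rank at most $2$ (it is supported on a single row and a single column), and then invoking subadditivity of rank together with $\rank(\text{padded }B') = \rank(B')$. Either formulation yields the bound $\rank(B') \ge n-2$ with the stated sharpness being evident from small examples.
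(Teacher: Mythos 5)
Your proof is correct and is exactly the paper's argument: the lemma is stated there as an immediate consequence of the fact that appending one row and one column to a matrix of rank $r$ yields a matrix of rank at most $r+2$, which is precisely the two-step insertion bound you spell out. The details you add (and the observation that neither symmetry nor principality of the submatrix is needed) are consistent with the paper's intent.
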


\subsection{Tools for apr-sequences}
$\null$
\indent
Some results that will serve as tools to
establish results in subsequent sections are provided in this section.
The first is an immediate consequence of the
Schur Complement Theorem, and it is, therefore, stated as a corollary.

\begin{cor}\label{schurAN}
{\rm (Schur Complement Corollary.)}
Let $B \in \Fnn$ be symmetric,
$\apr(B)=a_1 a_2 \cdots a_{n-1}$ and
$B[\gamma]$ a nonsingular
principal submatrix of $B$,
with $|\gamma| = k$.
Let $C = B/B[\gamma]$
and $\apr(C)=a'_{1} a'_2 \cdots a'_{n-k-1}$.
Then, for $j=1, \dots, n-k-1$,
$a'_j=a_{j+k}$  if
$a_{j+k} \in \{{\tt A,N}\}$.
\end{cor}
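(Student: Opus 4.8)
The plan is to apply the Schur Complement Theorem (Theorem \ref{schur}) directly, translating statements about the almost-principal minors of $C = B/B[\gamma]$ into statements about certain minors of $B$. Fix $j$ with $1 \le j \le n-k-1$, and suppose first that $a_{j+k} = {\tt N}$; I want to show $a'_j = {\tt N}$, i.e., every almost-principal minor of $C$ of order $j$ vanishes. Let $C[\alpha,\beta]$ be an arbitrary order-$j$ almost-principal submatrix of $C$, so $|\alpha| = |\beta| = j$ and $|\alpha \cap \beta| = j-1$, with indices inherited from $B$ (hence $\alpha,\beta$ are disjoint from $\gamma$). By part $(ii)$ of Theorem \ref{schur}, $\det C[\alpha,\beta] = \det B[\alpha \cup \gamma, \beta \cup \gamma] / \det B[\gamma]$. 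Now $|\alpha \cup \gamma| = |\beta \cup \gamma| = j+k$ and $|(\alpha\cup\gamma) \cap (\beta\cup\gamma)| = |(\alpha\cap\beta)\cup\gamma| = (j-1)+k = (j+k)-1$, so $B[\alpha\cup\gamma, \beta\cup\gamma]$ is an almost-principal submatrix of $B$ of order $j+k$. Since $a_{j+k} = {\tt N}$, its determinant is $0$, and therefore $\det C[\alpha,\beta] = 0$; as $C[\alpha,\beta]$ was arbitrary, $a'_j = {\tt N}$.

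For the case $a_{j+k} = {\tt A}$, the argument runs in the same direction but now I need \emph{every} order-$j$ almost-principal minor of $C$ to be nonzero. The key point is that the correspondence $C[\alpha,\beta] \leftrightarrow B[\alpha\cup\gamma,\beta\cup\gamma]$ described above is onto the order-$j$ almost-principal submatrices of $C$: given any such $C[\alpha,\beta]$ (with inherited indexing, so $\alpha,\beta \subseteq \{1,\dots,n\}\setminus\gamma$), the set $B[\alpha\cup\gamma,\beta\cup\gamma]$ is a well-defined almost-principal submatrix of $B$ of order $j+k$ by the index count in the previous paragraph. Since $a_{j+k} = {\tt A}$, we have $\det B[\alpha\cup\gamma,\beta\cup\gamma] \ne 0$, and dividing by $\det B[\gamma] \ne 0$ gives $\det C[\alpha,\beta] \ne 0$. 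As this holds for every order-$j$ almost-principal submatrix of $C$, we conclude $a'_j = {\tt A}$. Combining the two cases yields $a'_j = a_{j+k}$ whenever $a_{j+k} \in \{{\tt A},{\tt N}\}$.

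The only genuinely careful step is the bookkeeping of index sets — verifying that $\alpha\cup\gamma$ and $\beta\cup\gamma$ really do have the right cardinality and intersection size to be an almost-principal pair in $B$, and that $\alpha,\beta$ (as inherited indices in $C$) are automatically disjoint from $\gamma$ so that the union is disjoint and the cardinalities add. This is routine but is where an error would creep in; everything else is an immediate quotation of Theorem \ref{schur}. I should also note why the ${\tt S}$ case is excluded: if $a_{j+k} = {\tt S}$, some order-$(j+k)$ almost-principal minors of $B$ that are nonzero may not be of the form $B[\alpha\cup\gamma,\beta\cup\gamma]$ with $\gamma$ a subset of the index sets, so the surjection argument does not let us transfer the ``some nonzero'' information to $C$, and indeed $a'_j$ can fail to equal ${\tt S}$ — which is exactly why the corollary is stated only for $\{{\tt A},{\tt N}\}$.
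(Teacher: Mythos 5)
Your proposal is correct and is exactly the argument the paper has in mind: the corollary is stated there without proof as an "immediate consequence" of the Schur Complement Theorem, and your index bookkeeping (showing $B[\alpha\cup\gamma,\beta\cup\gamma]$ is an order-$(j+k)$ almost-principal submatrix of $B$ and quoting part $(ii)$ of Theorem \ref{schur}) is precisely the verification being left to the reader. Your remark on why the {\tt S} case cannot be transferred is also accurate.
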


A result analogous to
the Inverse Theorem for epr-Sequences can be established for
apr-sequences, by applying Jacobi's determinantal identity:

\begin{thm}\label{Inverse Thm}
{\rm (Inverse Theorem.)}
Let $B \in \Fnn$ be symmetric and nonsingular.
If
$\apr(B) = a_1a_2 \cdots a_{n-1}$, then
$\apr(B^{-1}) = a_{n-1}a_{n-2} \cdots a_1$.
\end{thm}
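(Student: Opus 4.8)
The plan is to mimic the proof of the Inverse Theorem for epr-sequences (Theorem \ref{NN Thm for epr}'s companion, the Inverse Theorem for Epr-Sequences from \cite{EPR}), replacing the role of Jacobi's identity on principal minors with its analogue on almost-principal minors. First I would recall Jacobi's determinantal identity in the following form: if $B \in \Fnn$ is nonsingular with inverse $B^{-1}$, then for index sets $\alpha, \beta \subseteq \{1,2,\dots,n\}$ with $|\alpha| = |\beta| = k$,
\[
\det\left( B^{-1}[\alpha, \beta] \right) = \pm \frac{\det\left( B[\{1,\dots,n\}\setminus \beta, \{1,\dots,n\}\setminus \alpha] \right)}{\det(B)},
\]
where the sign is $(-1)^{s}$ with $s = \sum_{i \in \alpha} i + \sum_{j \in \beta} j$, and the complements are taken within $\{1,2,\dots,n\}$.

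The key observation is that this correspondence sends almost-principal submatrices to almost-principal submatrices while reversing the order. Concretely, if $B^{-1}[\alpha,\beta]$ is almost-principal of order $k$, meaning $|\alpha| = |\beta| = k$ and $|\alpha \cap \beta| = k-1$, then I would check that its complementary pair $\alpha' = \{1,\dots,n\}\setminus\beta$ and $\beta' = \{1,\dots,n\}\setminus\alpha$ satisfies $|\alpha'| = |\beta'| = n-k$ and $|\alpha' \cap \beta'| = n-k-1$; the last equality follows from $|\alpha' \cap \beta'| = n - |\alpha \cup \beta| = n - (|\alpha| + |\beta| - |\alpha\cap\beta|) = n - (k + k - (k-1)) = n-k-1$. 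Hence $B[\alpha',\beta']$ is an almost-principal submatrix of $B$ of order $n-k$. Moreover this map is a bijection between almost-principal submatrices of $B^{-1}$ of order $k$ and almost-principal submatrices of $B$ of order $n-k$ (its inverse being the same construction applied to $B = (B^{-1})^{-1}$). Since $\det(B) \neq 0$ is a fixed nonzero scalar and the Jacobi sign is nonzero, $\det(B^{-1}[\alpha,\beta]) \neq 0$ if and only if $\det(B[\alpha',\beta']) \neq 0$. Therefore: all order-$k$ almost-principal minors of $B^{-1}$ are nonzero iff all order-$(n-k)$ almost-principal minors of $B$ are nonzero; none are nonzero iff none of order $n-k$ are; and the ``some but not all'' case is preserved as well. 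This says exactly that the $k$-th term of $\apr(B^{-1})$ equals the $(n-k)$-th term of $\apr(B)$ for each $k = 1, \dots, n-1$, which is the claimed reversal $\apr(B^{-1}) = a_{n-1}a_{n-2}\cdots a_1$.

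I would also remark that symmetry of $B$ (hence of $B^{-1}$) is used only to ensure that both $\apr(B)$ and $\apr(B^{-1})$ are well-defined objects in the sense of the paper's definitions; the Jacobi computation itself does not require it. The main obstacle I anticipate is purely bookkeeping: verifying carefully that the complementation map is genuinely a bijection on almost-principal index pairs (in particular that distinct pairs $(\alpha,\beta)$ map to distinct pairs and that the order-counting identity $|\alpha'\cap\beta'| = n-k-1$ holds in all cases, including small $k$ and $k = n-1$), and confirming that the trichotomy A/S/N is transported correctly — especially that the ``S'' case cannot collapse, which it cannot precisely because the map is a bijection preserving the nonvanishing status of each individual minor. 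None of these steps involves any delicate estimate; the proof is a direct translation of the epr-sequence argument.
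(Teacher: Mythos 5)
Your proposal is correct and is exactly the argument the paper intends: the paper proves this theorem by invoking Jacobi's determinantal identity, and your write-up simply fills in the (routine) verification that complementation $(\alpha,\beta)\mapsto(\beta^c,\alpha^c)$ is a bijection carrying order-$k$ almost-principal index pairs of $B^{-1}$ to order-$(n-k)$ almost-principal index pairs of $B$ while preserving nonvanishing of the corresponding minors. No gaps; the computation $|\alpha'\cap\beta'| = n-|\alpha\cup\beta| = n-k-1$ and the resulting transfer of the {\tt A}/{\tt S}/{\tt N} trichotomy are exactly right.
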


Appending a zero row and a zero column to a matrix is a useful operation,
since we can easily determine the apr-sequence of the resulting matrix if
we have the apr-sequence of the original matrix,
which leads us to the next observation,
that is useful when dealing with
sequences that do not contain an $\tt A$.

\begin{obs}\label{append}
Let $B \in \Fnn$ be symmetric,
$\apr(B) = a_1 a_2  \cdots a_{n-1}$ and
$B' = B \oplus O_1$.
Then
$\apr(B') = a'_1 a'_2  \cdots a'_{n-1} \tt N$, with
$a'_j = a_j$ if $a_j = \tt N$, and with
$a'_j = \tt S$ if $a_j \neq \tt N$,
for all $j \in \{1,2, \dots, n-1\}$.
\end{obs}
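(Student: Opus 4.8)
The plan is to directly analyze the almost-principal minors of $B' = B \oplus O_1$, sorting them into two types according to whether the extra index $n+1$ (the one corresponding to the appended zero row/column) is involved. Write $B'$ as an $(n+1)\times(n+1)$ matrix indexed by $\{1,2,\dots,n,n+1\}$, where row and column $n+1$ are zero. First I would fix an order $k \in \{1,2,\dots,n\}$ and consider an almost-principal submatrix $B'[\alpha,\beta]$ with $|\alpha|=|\beta|=k$ and $|\alpha \cap \beta| = k-1$. Let $\{i\} = \alpha \setminus \beta$ and $\{j\} = \beta \setminus \alpha$, so $i \neq j$.

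The key case split: (1) if $n+1 \notin \alpha \cup \beta$, then $B'[\alpha,\beta] = B[\alpha,\beta]$ is an almost-principal submatrix of $B$ of order $k \le n$, and for $k \le n-1$ its determinant ranges over exactly the values of the order-$k$ almost-principal minors of $B$; for $k = n$ this case is vacuous since $\alpha \cup \beta \subseteq \{1,\dots,n\}$ with $|\alpha \cup \beta| = n+1$ is impossible. (2) If $n+1 \in \alpha \cup \beta$: if $n+1 \in \alpha \cap \beta$ this is again impossible since $n+1 \notin \alpha \cap \beta$ would be forced... actually I should be careful — $n+1$ could lie in $\alpha\cap\beta$, in which case $B'[\alpha,\beta]$ has a zero row (row $n+1$ within the submatrix), so its determinant is $0$; if $n+1 \in \alpha \setminus \beta$, i.e. $i = n+1$, then row $n+1$ of $B'[\alpha,\beta]$ is the zero row, so the minor is $0$; symmetrically if $j = n+1$, the minor is $0$. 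Hence every order-$k$ almost-principal minor of $B'$ that involves index $n+1$ is zero.

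Assembling: for $k \in \{1,\dots,n-1\}$, the set of order-$k$ almost-principal minors of $B'$ equals the set of order-$k$ almost-principal minors of $B$ together with the value $0$ (the latter always attained by a minor of type (2), which exists since $n \ge k$ leaves room for an index equal to $n+1$). Therefore: if $a_k = \tt N$, all order-$k$ almost-principal minors of $B$ are zero, so all of $B'$'s are zero and $a'_k = \tt N$; if $a_k \neq \tt N$ (i.e. $\tt A$ or $\tt S$), then some order-$k$ almost-principal minor of $B$ is nonzero, but $0$ is also attained, so $a'_k = \tt S$. Finally for $k = n$: every order-$n$ almost-principal submatrix of $B'$ must use index $n+1$ (as noted, type (1) is impossible), so all order-$n$ almost-principal minors of $B'$ are zero, giving $a'_n = \tt N$ as the last term of the length-$n$ sequence $\apr(B')$.

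The only mildly delicate point — and the thing to get right rather than a genuine obstacle — is verifying in case (1) that for $k \le n-1$ one does not accidentally also capture new nonzero values or lose attainability: but since a subset relationship holds in both directions (any order-$k$ almost-principal submatrix of $B$ avoiding $n+1$ is literally one of $B'$, and conversely), the value sets match exactly, and the extra $0$ in case (2) is genuinely attainable precisely because $k \le n-1$ guarantees $|\{1,\dots,n+1\}| - 1 = n \ge k$ leaves enough indices to build such a submatrix with $n+1$ in the symmetric-difference slot. I expect the write-up to be short; no result beyond the definitions is needed.
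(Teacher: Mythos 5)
Your argument is correct: the paper states this as an Observation without proof, and your direct case analysis is exactly the intended justification — almost-principal submatrices of $B'$ avoiding the new index are precisely those of $B$, while any almost-principal submatrix using the new index contains a zero row or zero column (and such submatrices exist in every order $k \le n$), which forces $a'_j = \tt S$ when $a_j \neq \tt N$, $a'_j = \tt N$ when $a_j = \tt N$, and $a'_n = \tt N$. Your mid-paragraph correction on the case $n+1 \in \alpha \cap \beta$ lands in the right place, so the write-up only needs that cleaned up.
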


Another useful tool for working with apr-sequences is the following fact, which is analogous to \cite[Theorem 2.6]{EPR} (Inheritance Theorem for epr-sequences).

\begin{thm}
{\rm (Inheritance Theorem.)}
Let $B\in\Fnn$ be symmetric,
$m \leq n$, and
$1\le k \le m-1$.  
\ben
\item
If $[\apr(B)]_k={\tt N}$, then  $[\apr(C)]_k={\tt N}$ for all $m\times m$ principal submatrices $C$.
\item If  $[\apr(B)]_k={\tt A}$, then  $[\apr(C)]_k={\tt A}$ for all $m\times m$ principal submatrices $C$.
\item If $m \geq 6$ and $k \leq m-5$ and $[\apr(B)]_k = {\tt S}$, then there exists an $m \times m$ principal submatrix $C_S$ such that $[\apr(C_S)]_k ={\tt S}$.
\een

\end{thm}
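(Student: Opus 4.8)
The plan is to mimic the strategy used for the Inheritance Theorem for epr-sequences \cite[Theorem 2.6]{EPR}, adapted to the almost-principal setting. Parts (1) and (2) are ``downward'' statements that should follow quickly from the definition of the apr-sequence, while part (3) is an ``upward'' existence statement and will require the real work. Throughout, write $\apr(B) = a_1 a_2 \cdots a_{n-1}$, and for a principal submatrix $C = B[\delta]$ with $|\delta| = m$, write $\apr(C) = c_1 c_2 \cdots c_{m-1}$.

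\textbf{Parts (1) and (2).} For part (1), suppose $a_k = {\tt N}$, i.e.\ every almost-principal minor of $B$ of order $k$ is zero. If $C = B[\delta]$ is an $m \times m$ principal submatrix, then every order-$k$ almost-principal minor of $C$ is of the form $\det B[\alpha, \beta]$ with $\alpha, \beta \subseteq \delta$, $|\alpha| = |\beta| = k$, $|\alpha \cap \beta| = k-1$; this is an order-$k$ almost-principal minor of $B$, hence zero. So $c_k = {\tt N}$. For part (2), suppose $a_k = {\tt A}$. Then \emph{every} order-$k$ almost-principal minor of $B$ is nonzero; in particular every such minor of $C$ (again these are order-$k$ almost-principal minors of $B$) is nonzero, so $c_k = {\tt A}$. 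These two parts do not even need the hypothesis $m \ge 6$ or $k \le m-1$ beyond what is required for the minors to make sense ($k \le m-1$, since an $m \times m$ matrix has apr-sequence of length $m-1$).

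\textbf{Part (3).} This is the substantive part. Assume $a_k = {\tt S}$ with $k \le m-5$. Because $a_k = {\tt S}$, there exist index sets $\alpha_1, \beta_1$ with $|\alpha_1| = |\beta_1| = k$, $|\alpha_1 \cap \beta_1| = k-1$, and $\det B[\alpha_1, \beta_1] \ne 0$; and similarly index sets $\alpha_0, \beta_0$ of the same shape with $\det B[\alpha_0, \beta_0] = 0$. The goal is to find a single index set $\delta$ with $|\delta| = m$ containing enough of both configurations so that $C = B[\delta]$ witnesses both a nonzero and a zero order-$k$ almost-principal minor, forcing $c_k = {\tt S}$. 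The naive attempt is to take $\delta \supseteq \alpha_1 \cup \beta_1 \cup \alpha_0 \cup \beta_0$; however, each of $\alpha_i \cup \beta_i$ can have up to $k+1$ elements, so their union can have up to $2k+2$ elements, which may exceed $m$. The fix, exactly as in \cite{EPR}, is to \emph{not} insist on carrying both configurations literally, but to carry one of them (say the nonzero one, using $k+1$ indices for $\alpha_1 \cup \beta_1$) together with \emph{any} order-$k$ almost-principal submatrix that happens to be singular inside $C$. The plan is therefore: first pick $\delta_0 \supseteq \alpha_1 \cup \beta_1$ with $|\delta_0| = m$; if $C_0 = B[\delta_0]$ already has a singular order-$k$ almost-principal minor, take $C_S = C_0$ and we are done. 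Otherwise $[\apr(C_0)]_k = {\tt A}$, meaning every order-$k$ almost-principal minor of $C_0$ is nonzero, and we must perturb $\delta_0$: swap out one index not in $\alpha_1 \cup \beta_1$ (there are at least $m - (k+1) \ge 4$ such indices, which is where $k \le m-5$ and $m \ge 6$ are used) for an index from $\alpha_0 \cup \beta_0$, and argue that after finitely many such single-index swaps we reach a $\delta$ for which $B[\delta]$ contains both a nonzero order-$k$ almost-principal minor and a zero one. The key combinatorial lemma to isolate is: along a ``path'' of index sets from $\delta_0$ to some $\delta_1 \supseteq \alpha_0 \cup \beta_0$ obtained by swapping one index at a time, the value $[\apr(B[\delta])]_k$ cannot jump from ${\tt A}$ directly past ${\tt S}$ — but since $B[\delta_1]$ has the singular minor $\det B[\alpha_0,\beta_0] = 0$, it is either ${\tt S}$ or ${\tt N}$ at position $k$, and if at some step it is ${\tt A}$ and at the next it is ${\tt N}$ we need an intermediate argument. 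The cleanest way to handle this, and the step I expect to be the main obstacle, is to argue locally: whenever $B[\delta]$ has $[\apr(B[\delta])]_k = {\tt A}$ (all order-$k$ almost-principal minors nonzero) and $B[\delta']$ is obtained by a single swap, then $B[\delta \cap \delta']$ (of size $m-1 \ge 5 > k$) has \emph{some} nonzero order-$k$ almost-principal minor, and by choosing the swapped-in index appropriately one retains a nonzero order-$k$ almost-principal minor in $B[\delta']$ while moving toward the singular configuration; the moment a singular order-$k$ almost-principal minor first appears in some $B[\delta']$ along the way, that $B[\delta']$ has both, so $[\apr(B[\delta'])]_k = {\tt S}$ and we set $C_S = B[\delta']$. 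The slack $k \le m-5$ guarantees there is always ``room'' — at least $4$ free indices — to perform these swaps without destroying the surviving nonzero minor. I would structure the write-up around this swapping argument, citing the analogous bookkeeping in \cite[Theorem 2.6]{EPR} for the epr case and pointing out that the only genuinely new feature is that an almost-principal submatrix is indexed by two sets $\alpha, \beta$ rather than one, which is why $k+1$ (not $k$) indices must be reserved for the surviving configuration and hence why the slack needed is $k \le m-5$ rather than the $k \le m-3$ that appears in the epr Inheritance Theorem.
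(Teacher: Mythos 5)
Your parts (1) and (2) coincide with the paper's (one line: an almost-principal submatrix of a principal submatrix of $B$ is an almost-principal submatrix of $B$), but your part (3) is a genuinely different and correct route. The paper interpolates at the level of the order-$k$ almost-principal submatrices themselves: after aligning the index lists, it changes one $p_t$ to $q_t$ at a time, producing a chain from the nonsingular witness to the singular witness; some consecutive pair in the chain consists of a nonsingular and a singular order-$k$ almost-principal submatrix, and the union of their row and column index sets has size at most $k+5\le m$, so any $m\times m$ principal submatrix containing that union is the desired $C_S$ --- this is exactly where $k\le m-5$ enters. You instead walk through $m\times m$ principal submatrices, swapping one index at a time from a set containing the nonsingular witness toward one containing the singular witness, and your key local lemma does the work: if $[\apr(B[\delta])]_k={\tt A}$ and $\delta'$ differs from $\delta$ in one index, then $B[\delta\cap\delta']$ (of order $m-1\ge k+1$) already contains an order-$k$ almost-principal submatrix, necessarily nonsingular, and it survives inside $B[\delta']$; hence the first $\delta'$ along the walk whose submatrix acquires a singular order-$k$ almost-principal minor must be {\tt S} at position $k$. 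Two clean-ups: no ``appropriate choice'' of the swapped-in index is needed (the surviving minor sits in $\delta\cap\delta'$ regardless), and you should drop the initial insistence on keeping $\alpha_1\cup\beta_1$ inside $\delta$ --- that cannot be maintained once you also want to absorb $\alpha_0\cup\beta_0$, and the local {\tt A}-invariant makes it unnecessary. What each approach buys: the paper's argument exhibits both witnesses explicitly inside one principal submatrix with elementary (if fussy) index bookkeeping; yours avoids the alignment bookkeeping entirely and in fact only needs $k\le m-2$ (so the hypotheses $m\ge 6$ and $k\le m-5$ are artifacts of the paper's method, not of the statement), giving a slightly stronger inheritance result for {\tt S}.
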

\begin{proof}
Statements (1) and (2) follow from the fact that an almost-principal submatrix of a principal submatrix of $B$ is also an almost-principal submatrix of $B$.

We now establish the final statement.
Suppose that $m \geq 6$ and $k \leq m-5$ and $[\apr(B)]_k = {\tt S}$.
Let $p_1,p_2,\dots,p_{k-1}, q_1,q_2,\dots,q_{k-1}, i, j ,r ,s
\in \{1,2, \dots, n\}$ be
indices such that the following are almost-principal submatrices of order $k$:
\[
B[\{p_1,p_2,\dots,p_{k-1}, i\},\{p_1,p_2,\dots,p_{k-1}, j\}]
\mbox{ \ and \ }
B[\{q_1,q_2,\dots,q_{k-1}, r\},\{q_1,q_2,\dots,q_{k-1}, s\}];
\]
moreover, assume that
the former submatrix is nonsingular and
the latter is singular.
Without loss of generality, we may assume that
any common indices between the lists
$p_1,p_2,\dots,p_{k-1}$ and $q_1,q_2,\dots,q_{k-1}$
occur in the same position in each list;
moreover, we may assume that these common indices (if any) appear consecutively at the beginning of each list.
If
\[
|\{q_1,q_2,\dots,q_{k-1}\} \cap \{i,j\}| = 2,
\]
then, without loss of generality, assume that
$\{q_{k-2}, q_{k-1}\} = \{i,j\}$.
If
\[
|\{q_1,q_2,\dots,q_{k-1}\} \cap \{i,j\}| = 1,
\]
then, without loss of generality,
assume that $q_{k-1} \in \{i,j\}$.
Consider the following list of almost-principal submatrices of order $k$:
\[
\begin{array}{c}
B[\{p_1,p_2,p_3,\ldots,p_{k-3},p_{k-2},p_{k-1},i\},
\{p_1,p_2,p_3,\ldots,p_{k-3},p_{k-2},p_{k-1}, j\}],\\
B[\{q_1,p_2,p_3,\ldots,p_{k-3},p_{k-2},p_{k-1},i\},
\{q_1,p_2,p_3,\ldots,p_{k-3},p_{k-2},p_{k-1}, j\}],\\
B[\{q_1,q_2,p_3,\ldots,p_{k-3},p_{k-2},p_{k-1},i\},
\{q_1,q_2,p_3,\ldots,p_{k-3},p_{k-2},p_{k-1}, j\}],\\
B[\{q_1,q_2,q_3,\ldots,p_{k-3},p_{k-2},p_{k-1},i\},
\{q_1,q_2,q_3,\ldots,p_{k-3},p_{k-2},p_{k-1}, j\}], \\
\cdots\\
B[q_1,q_2,q_3,\ldots,q_{k-3},p_{k-2},p_{k-1},i\},
\{q_1,q_2,q_3,\ldots,q_{k-3},p_{k-2},p_{k-1}, j\}], \\
B[q_1,q_2,q_3,\ldots,q_{k-3},q_{k-2},q_{k-1},r\},
\{q_1,q_2,q_3,\ldots,q_{k-3},q_{k-2},q_{k-1}, s\}]. \\
\end{array}
\]
Since the first submatrix in the above list is
nonsingular and the last is singular,
this list of submatrices must contain a
nonsingular submatrix and a singular submatrix appearing consecutively, say, $B[\alpha, \beta]$ and $B[\gamma, \mu]$.
Note that $|\alpha \cup \beta \cup \gamma \cup \mu| \leq k+5 \leq m$.
Then by letting $C_S$ be an
$m \times m$ principal submatrix of $B$ containing
$B[\alpha \cup \beta \cup \gamma \cup \mu]$,
the desired conclusion follows.
\end{proof}

\section{The almost-principal rank characteristic sequence}\label{s: apr-sequence}
$\null$
\indent
We begin with a simple but useful observation.

\begin{obs}
\label{N...}
Let $B \in \Fnn$ be symmetric.
Suppose that $\apr(B) = a_1 a_2 \cdots a_{n-1}$ and
$a_1 = \tt N$.
Then $B$ is a diagonal matrix and $a_j = \tt N$ for all $j \geq 1$.
\end{obs}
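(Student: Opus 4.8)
The plan is to unravel what $a_1 = {\tt N}$ means entrywise. The almost-principal minors of order $1$ are exactly the determinants $\det B[\{i\},\{j\}] = b_{ij}$ with $i \neq j$; so $a_1 = {\tt N}$ says precisely that every off-diagonal entry of $B$ vanishes, i.e.\ $B$ is diagonal. This is the first (and essentially only substantive) step, and it is immediate from the definitions.

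Once $B$ is known to be diagonal, I would argue that every almost-principal submatrix of $B$ of any order $k \geq 1$ is singular, hence $a_j = {\tt N}$ for all $j$. Indeed, let $B[\alpha,\beta]$ be an almost-principal submatrix, so $|\alpha| = |\beta| = k$ and $|\alpha \cap \beta| = k-1$; write $\alpha = \gamma \cup \{i\}$ and $\beta = \gamma \cup \{j\}$ with $i \neq j$ and $i,j \notin \gamma$. Since $B$ is diagonal, the row of $B[\alpha,\beta]$ indexed by $i$ has entries $b_{i,\ell}$ for $\ell \in \beta$; but $i \notin \beta$ (as $i \notin \gamma$ and $i \neq j$), so every such entry is $0$. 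Thus $B[\alpha,\beta]$ has a zero row and is singular. As this holds for all choices of $\alpha,\beta$, none of the almost-principal minors of order $k$ is nonzero, so $a_k = {\tt N}$ for every $k \in \{1,2,\dots,n-1\}$, which is the claim.

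There is no real obstacle here; the only thing to be careful about is the bookkeeping in the second step, namely verifying that the distinguished index $i$ of $\alpha$ genuinely does not lie in $\beta$ (so that the corresponding row of the submatrix is entirely off-diagonal in $B$ and hence zero). Everything else is a direct reading of the definition of an almost-principal submatrix together with the fact that a matrix with a zero row has determinant $0$.
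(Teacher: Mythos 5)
Your proof is correct and is exactly the argument the paper has in mind (the paper states this as an observation without proof, since it is immediate): the order-$1$ almost-principal minors are the off-diagonal entries, so $a_1 = {\tt N}$ forces $B$ diagonal, and then every almost-principal submatrix $B[\gamma\cup\{i\},\gamma\cup\{j\}]$ has the zero row indexed by $i \notin \gamma\cup\{j\}$. Your bookkeeping that $i$ does not lie in the column index set is the right point to check, and it is handled correctly.
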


\begin{prop}
\label{A...N and S...N}
Let $B \in \Fnn$ be symmetric.
Suppose that
$\apr(B) = {\tt A} a_2 a_3 \cdots a_{n-2}{\tt N}$ or
$\apr(B) = {\tt S} a_2 a_3 \cdots a_{n-2}{\tt N}$.
Then $B$ is singular.
\end{prop}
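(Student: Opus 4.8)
The plan is to prove the contrapositive: I will assume $B$ is nonsingular and show that the last entry $a_{n-1}$ of $\apr(B)$ cannot be $\tt N$ when $a_1 \in \{{\tt A},{\tt S}\}$ (equivalently, when $B$ is non-diagonal, which is forced by $a_1\neq{\tt N}$ via Observation \ref{N...}). So suppose $B \in \Fnn$ is symmetric, nonsingular, and non-diagonal. I want to exhibit a nonzero almost-principal minor of order $n-1$, i.e., indices $\alpha,\beta$ with $|\alpha|=|\beta|=n-1$ and $|\alpha\cap\beta|=n-2$ such that $\det B[\alpha,\beta]\neq 0$; concretely, $\alpha=\{1,\dots,n\}\setminus\{i\}$ and $\beta=\{1,\dots,n\}\setminus\{j\}$ with $i\neq j$, and the claim is $\det B[\alpha,\beta]\neq 0$ for some such pair.

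The natural tool is Jacobi's determinantal identity (the same identity underlying the Inverse Theorem already stated): since $B$ is nonsingular, $\det B[\{1,\dots,n\}\setminus\{i\},\{1,\dots,n\}\setminus\{j\}] = \pm (\det B)\,(B^{-1})_{ji}$. Hence the almost-principal minor of order $n-1$ obtained by deleting row $i$ and column $j$ is nonzero precisely when the $(j,i)$ entry of $B^{-1}$ is nonzero. So it suffices to show that $B^{-1}$ has a nonzero off-diagonal entry. But $B^{-1}$ is symmetric, and if every off-diagonal entry of $B^{-1}$ were zero, then $B^{-1}$ would be diagonal, forcing $B=(B^{-1})^{-1}$ to be diagonal as well — contradicting the assumption that $B$ is non-diagonal. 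Therefore $B^{-1}$ has a nonzero off-diagonal entry $(B^{-1})_{ji}$ with $i\neq j$, and the corresponding order-$(n-1)$ almost-principal minor of $B$ is nonzero, so $a_{n-1}\in\{{\tt A},{\tt S}\}$, not $\tt N$.

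The only genuine obstacle is bookkeeping rather than mathematics: making sure the hypothesis $a_1\neq{\tt N}$ really does give a non-diagonal matrix (immediate from Observation \ref{N...}), and stating Jacobi's identity with the correct sign and index conventions so that "delete row $i$, column $j$" matches the $(j,i)$ cofactor of $B^{-1}$. Once the dictionary "nonzero order-$(n-1)$ almost-principal minor $\iff$ nonzero off-diagonal entry of $B^{-1}$" is set up, the proof is a one-line contradiction. I would present it in exactly that order: (1) contrapositive setup, invoke Observation \ref{N...} to get non-diagonality; (2) Jacobi's identity translating order-$(n-1)$ almost-principal minors of $B$ into entries of $B^{-1}$; (3) $B^{-1}$ symmetric and non-diagonal, hence has a nonzero off-diagonal entry; (4) conclude $a_{n-1}\neq{\tt N}$, contradiction.
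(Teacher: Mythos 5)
Your proposal is correct and is essentially the paper's own argument: the paper also notes that $a_1\neq{\tt N}$ forces $B$ to be non-diagonal, and then observes that if $B$ were nonsingular, the vanishing of all order-$(n-1)$ almost-principal minors (equivalently, via the adjugate/Jacobi relation, of all off-diagonal entries of $B^{-1}$) would make $B^{-1}$, and hence $B$, diagonal — a contradiction. Your version merely phrases this as a contrapositive and spells out the Jacobi dictionary explicitly.
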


\bpf
Since $\apr(B)$ does not begin with $\tt N$, $B$ is not a diagonal matrix.
If $B$ was nonsingular, then, since each of its almost-principal minors of order $n-1$ is zero, $B^{-1}$ is a diagonal matrix, which would imply that $B$ itself is a diagonal matrix, leading to a contradiction.
\epf

The $\tt NN$ Theorem for epr-sequences states that if
the epr-sequence of a symmetric matrix $B \in \Fnn$ contains
two consecutive $\tt N$s,
then it must contain $\tt N$s from that point forward;
the same statement holds for apr-sequences:

\begin{thm}
\label{thm:N implies all N}
{\rm ({\tt NN} Theorem.)}
Let $B \in \Fnn$ be symmetric.
Suppose that $\apr(B) = a_1 a_2 \cdots a_{n-1}$ and
$a_{k} = a_{k+1} = \tt N$ for some $k$.
Then $a_j = \tt N$ for all $j \geq k$.
\end{thm}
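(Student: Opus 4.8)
The plan is to argue by induction on $n$; for $n\le 2$ there is nothing to prove. If $B$ is diagonal then $a_1={\tt N}$, so $\apr(B)={\tt N}{\tt N}\cdots{\tt N}$ by Observation~\ref{N...} and we are done; hence assume $B$ is non-diagonal, so $a_1\ne{\tt N}$ and therefore $k\ge 2$. Suppose, for a contradiction, that $a_m\ne{\tt N}$ for some $m\ge k+2$, and take $m$ as small as possible. If $m\le n-2$, pick a nonsingular almost-principal submatrix $B[\alpha,\beta]$ of order $m$ and pass to $B':=B[\alpha\cup\beta]$, which is symmetric of order $m+1\le n-1$: by the minimality of $m$, every almost-principal minor of $B'$ of order $j$ with $k\le j\le m-1$ vanishes, while $B'$ still contains $B[\alpha,\beta]$, so $[\apr(B')]_k=[\apr(B')]_{k+1}={\tt N}$ but $[\apr(B')]_m\ne{\tt N}$, contradicting the induction hypothesis. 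Hence $m=n-1$. Consequently $a_j={\tt N}$ for all $k\le j\le n-2$, $a_{n-1}\ne{\tt N}$, and $k\le n-3$ (so $n\ge 5$); moreover $\rank(B)\ge n-1$ since $a_{n-1}\ne{\tt N}$ exhibits a nonsingular $(n-1)\times(n-1)$ submatrix of $B$.

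The crux is an analysis of the principal submatrices obtained by deleting one row and column: for $t\in\{1,\dots,n\}$ write $B_{\OL t}=B[\{1,\dots,n\}\setminus\{t\}]$. Since $a_{n-2}={\tt N}$, the last entry of the length-$(n-2)$ sequence $\apr(B_{\OL t})$ equals ${\tt N}$; so if $B_{\OL t}$ were nonsingular, Proposition~\ref{A...N and S...N} would prevent $\apr(B_{\OL t})$ from beginning with ${\tt A}$ or ${\tt S}$, forcing it to begin with ${\tt N}$, and then Observation~\ref{N...} would make $B_{\OL t}$ diagonal. I would then eliminate this possibility: a diagonal nonsingular $B_{\OL t}$ forces the only possibly nonzero off-diagonal entries of $B$ to lie in row and column $t$, and forces $B_{ff}\ne 0$ for every $f\ne t$; for any $q\ne t$ a direct expansion then gives $\det B[\rho\cup\{t\},\rho\cup\{q\}]=\pm B_{tq}\prod_{c\in\rho}B_{cc}$ for any $(n-3)$-set $\rho\subseteq\{1,\dots,n\}\setminus\{t,q\}$, and since this is an almost-principal minor of order $n-2$ and $a_{n-2}={\tt N}$, it must vanish, whence $B_{tq}=0$. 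As $q\ne t$ was arbitrary, $B$ would be diagonal, contradicting $a_1\ne{\tt N}$. Therefore $B_{\OL t}$ is singular for every $t$.

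Finally I split on $\rank(B)\in\{n-1,n\}$. If $\rank(B)=n-1$, let $x$ span $\ker B$ (so $x\ne 0$); then $\operatorname{adj}(B)=\mu xx^{T}$ for a fixed $\mu\ne 0$, so $\det B_{\OL t}=\mu x_t^{2}$, and the singularity of all $B_{\OL t}$ forces $x=0$, a contradiction. If $\rank(B)=n$, then $(B^{-1})_{tt}=\det B_{\OL t}/\det B=0$ for all $t$ by Jacobi's determinantal identity, so $B^{-1}$ has zero diagonal; by the Inverse Theorem, $[\apr(B^{-1})]_2=a_{n-2}={\tt N}$ and $[\apr(B^{-1})]_3=a_{n-3}={\tt N}$ (using $n-3\ge k$). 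Since every order-$2$ almost-principal minor of $B^{-1}$ vanishes and $B^{-1}$ has zero diagonal, $(B^{-1})_{ep}(B^{-1})_{eq}=0$ for all distinct $e,p,q$, so each row of $B^{-1}$ has at most one nonzero entry; being symmetric and nonsingular, $B^{-1}$ is then permutationally similar to a direct sum of blocks $\left[\begin{smallmatrix}0&b\\ b&0\end{smallmatrix}\right]$ with $b\ne 0$, so $n$ is even (hence $n\ge 6$) with at least three such blocks. Taking two indices from one block and one from another yields a nonzero almost-principal minor of $B^{-1}$ of order $3$, contradicting $[\apr(B^{-1})]_3={\tt N}$.

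I expect the main obstacle to be the middle step: the whole argument turns on combining ``$\apr(B_{\OL t})$ ends in ${\tt N}$'' with Proposition~\ref{A...N and S...N} and Observation~\ref{N...} to pin down the diagonal/arrowhead structure of the deleted submatrices, and then ruling out that structure by the explicit minor computation above; once that is in place, the two rank cases are short, using only the adjugate and Jacobi's identity.
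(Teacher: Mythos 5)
Your argument is correct, and it reaches the conclusion by a genuinely different route than the paper. The paper's proof is a direct Schur-complement argument: it reduces to showing $a_{k+2}={\tt N}$, locates a nonsingular $k\times k$ principal submatrix $B[\gamma]$ inside the index set $\alpha$ of a putative nonsingular order-$(k+2)$ almost-principal submatrix (splitting on whether $B[\alpha]$ is nonsingular, using the diagonal-inverse trick or Lemma \ref{rank when deleting} plus the fact that rank is principal), and then contradicts Observation \ref{N...} for $B/B[\gamma]$ via the Schur Complement Corollary and the determinantal identity of Theorem \ref{schur}. You instead induct on $n$ to push the first violation to the last position $m=n-1$, show every order-$(n-1)$ principal submatrix $B_{\OL t}$ must be singular (via Proposition \ref{A...N and S...N}, Observation \ref{N...}, and an explicit order-$(n-2)$ minor computation), and then finish with adjugate/inverse arguments in the two rank cases. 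This buys a proof with no Schur complements at all, at the cost of length: your rank-$n$ case essentially re-derives, for $B^{-1}$, the perfect-matching structure of zero-diagonal matrices with vanishing order-$2$ almost-principal minors, which the paper develops separately later (Lemma \ref{zero diagonal}, Proposition \ref{SN}). Two small streamlinings: in the rank-$(n-1)$ case the adjugate is unnecessary, since Theorem \ref{thm: rank of a symm mtx} (rank is principal) immediately yields a nonsingular $B_{\OL t}$, contradicting your middle step; and in the rank-$n$ case two $2\times 2$ blocks already suffice to produce the nonzero order-$3$ almost-principal minor, so the remark about three blocks is not needed. All the results you invoke (Observation \ref{N...}, Proposition \ref{A...N and S...N}, the Inverse Theorem, Lemma \ref{rank when deleting}) precede the ${\tt NN}$ Theorem in the paper, so there is no circularity.
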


\bpf
If $k=n-2$, then there is nothing to prove;
thus, assume that $k \leq n-3$.
It suffices to show that $a_{k+2} = \tt N$.
Suppose to the contrary that $a_{k+2} \neq \tt N$.
Let $B[\alpha \cup\{i\}, \alpha \cup\{j\}]$ be
a nonsingular almost-principal submatrix of $B$ with
$|\alpha| = k+1$ (note that $i \neq j$).
We now show that $B$ has a
nonsingular $k \times k$ principal submatrix contained in
the $(k+1)\times (k+1)$ submatrix $B[\alpha]$.
There are two cases:

\noindent
{\bf Case 1}: $B[\alpha]$ is nonsingular.

\noindent
Since $a_{k} = \tt N$, the Inheritance Theorem implies that
every almost-principal minor of $B[\alpha]$ of order $k$ is zero.
Then, as $B[\alpha]$ is a $(k+1) \times (k+1)$ nonsingular matrix,
its inverse is a diagonal matrix, which implies that
$B[\alpha]$ is a (nonsingular) diagonal matrix.
It now follows immediately that $B[\alpha]$ contains 
a nonsingular $k \times k$ principal submatrix, as desired.

\noindent
{\bf Case 2}: $B[\alpha]$ is singular.

\noindent
Since $B[\alpha \cup\{i\}, \alpha \cup\{j\}]$ is a
nonsingular $(k+2)\times (k+2)$ matrix,
Lemma \ref{rank when deleting} implies that
$\rank(B[\alpha]) \geq (k+2)-2 = k$.
Then, as $B[\alpha]$ is $(k+1) \times (k+1)$ singular matrix,
$\rank(B[\alpha]) =  k$.
Since the rank of a symmetric matrix is principal,
$B[\alpha]$ contains a nonsingular $k \times k$ principal submatrix,
as desired.

Let $B[\gamma]$ be a nonsingular $k \times k$ principal submatrix (of $B[\alpha]$) with
$\gamma \subseteq \alpha$ and $|\gamma| = k$.
Then, as $|\gamma|=|\alpha|-1$,
$\alpha = \gamma \cup \{p\}$ for some $p$ (note that $p \neq i,j$).
Let $C=B/B[\gamma]$ and
assume that $C$ inherits the indexing from $B$.
Observe that $C$ is an $(n-k) \times (n-k)$ matrix
(by the Schur Complement Theorem),
and that $n-k \geq 3$.
Suppose that
$\apr(C)=a'_{1} a'_2 \cdots a'_{n-k-1}$
Then, as $a_{k+1} = \tt N$,
the Schur Complement Corollary implies that $a'_{1}  = \tt N$.
It follows from Observation \ref{N...} that
$\apr(C)=\tt NN\overline{N}$.
Hence, $\det C[\{p,i\}, \{p,j\}] = 0$.
However,
by the Schur Complement Theorem,
\begin{align*}
\det C[\{p,i\}, \{p,j\}]
&= \frac{\det B[\gamma \cup \{p,i\}, \gamma \cup\{p,j\}]}{\det B[\gamma]} \\
&=\frac{\det B[\alpha \cup \{i\}, \alpha \cup\{j\}]}{\det B[\gamma]} \\
& \neq 0.
\end{align*}
Hence, we have a contradiction.
\epf

Now that we have the $\tt NN$ Theorem (for apr-sequences),
a question arises:
Does a statement analogous to
Theorem \ref{qpr: N theorem} hold for apr-sequences?
It does not:

\begin{ex}
\rm
For the matrix
\[
B =
\mtx{0&1&0&0\\
        1&0&0&0\\
        0&0&0&1\\
        0&0&1&0},
\]
$\apr(B) = \tt SNS$.
\end{ex}

The next result is a corollary to the $\tt NN$ Theorem.

\begin{cor}
Let $B\in\Fnn$ be symmetric.
Suppose that $\apr(B)$ contains $\tt NN$.
If $\apr(B) \neq \tt NN\overline{N}$,
then $B$ is singular.
\end{cor}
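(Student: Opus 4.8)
The plan is to deduce this quickly from the $\tt NN$ Theorem (Theorem \ref{thm:N implies all N}), Observation \ref{N...}, and Proposition \ref{A...N and S...N}. Write $\apr(B) = a_1 a_2 \cdots a_{n-1}$. Since $\apr(B)$ contains $\tt NN$, its length is at least $2$, so $n \geq 3$; moreover, with the convention for $\overline{\tt N}$, the sequence $\tt NN\overline{N}$ denotes exactly the all-$\tt N$ sequences of length at least $2$, so the hypothesis $\apr(B) \neq \tt NN\overline{N}$ says precisely that $\apr(B)$ is not the all-$\tt N$ sequence. (For $n = 3$ this makes the two hypotheses incompatible, so the statement is vacuous there; the argument below applies uniformly in any case.)

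First I would pin down the two ends of $\apr(B)$. If $a_1 = {\tt N}$, then Observation \ref{N...} forces $a_j = {\tt N}$ for every $j$, i.e.\ $\apr(B) = \tt NN\overline{N}$, contrary to hypothesis; hence $a_1 \neq {\tt N}$. On the other hand, pick an occurrence $a_k = a_{k+1} = {\tt N}$ in $\apr(B)$; the $\tt NN$ Theorem then gives $a_j = {\tt N}$ for all $j \geq k$, and in particular $a_{n-1} = {\tt N}$. Consequently $\apr(B)$ has the form ${\tt A}a_2 a_3 \cdots a_{n-2}{\tt N}$ or ${\tt S}a_2 a_3 \cdots a_{n-2}{\tt N}$ (the middle block being empty when $n = 3$), and Proposition \ref{A...N and S...N} applies directly to give that $B$ is singular.

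I do not expect a genuine obstacle: the whole argument is an assembly of the three cited results, and the only point requiring care is the bookkeeping for short sequences --- reading $\tt NN\overline{N}$ as ``all $\tt N$s'' and checking that the two forms in Proposition \ref{A...N and S...N} still make sense for small $n$. If one wished to avoid quoting Proposition \ref{A...N and S...N}, the same conclusion follows by contradiction: if $B$ were nonsingular, the Inverse Theorem would give $\apr(B^{-1}) = a_{n-1}a_{n-2}\cdots a_1$, whose first entry is $a_{n-1} = {\tt N}$, so Observation \ref{N...} would make $B^{-1}$, and hence $B$, a diagonal matrix, forcing $a_1 = {\tt N}$ and contradicting the previous paragraph --- this is essentially a rerun of the proof of Proposition \ref{A...N and S...N}.
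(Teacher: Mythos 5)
Your proposal is correct, and it takes a slightly different route from the paper's. The paper proves the contrapositive: assuming $B$ nonsingular, it applies the $\tt NN$ Theorem to $\apr(B)$ to get $\tt N$s from position $k$ onward, then the Inverse Theorem to pass to $\apr(B^{-1}) = a_{n-1}\cdots a_1$, and then the $\tt NN$ Theorem a second time (to the reversed sequence, which now begins $\tt NN$) to conclude $\apr(B)=\tt NN\overline{N}$. You instead argue forwards: Observation \ref{N...} rules out $a_1={\tt N}$ (else the sequence would be all $\tt N$s), the $\tt NN$ Theorem forces $a_{n-1}={\tt N}$, and then Proposition \ref{A...N and S...N} --- which already packages the ``inverse of a matrix with all order-$(n-1)$ almost-principal minors zero is diagonal'' argument --- yields singularity directly. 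Both are sound; your assembly avoids the second invocation of the $\tt NN$ Theorem and the explicit use of the Inverse Theorem by leaning on the earlier Proposition, while the paper's contrapositive keeps the argument at the level of the $\tt NN$ and Inverse Theorems. Your closing remark is accurate: unwinding Proposition \ref{A...N and S...N} essentially reproduces the inverse-plus-diagonality mechanism the paper uses, so the two proofs coincide at bottom. Your bookkeeping for short sequences (the case $n=3$ being vacuous, the middle block possibly empty) is also handled correctly.
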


\bpf
We will establish the contrapositive.
Suppose that $B$ is nonsingular.
Let $\apr(B) = a_1a_2 \cdots a_{n-1}$, and
suppose that $a_k a_{k+1}= \tt NN$ for some $k$.
By the $\tt NN$ Theorem, $a_j = \tt N$ for all $j \geq k$.
Hence, $\apr(B) = a_1a_2 \cdots a_{k-1} \tt NN\overline{N}$.
By the Inverse Theorem,
$\apr(B^{-1}) = {\tt \overline{N} NN}a_{k-1} \cdots  a_2a_1$.
Then, by the $\tt NN$ Theorem, $a_j = \tt N$ for all $j \leq k-1$,
implying that $\apr(B) = \tt NN \overline{N}$, as desired.
\epf

We now show that $\tt NA$ does not occur as a subsequence of the apr-sequence of a symmetric matrix $B \in \Fnn$.
But, to do so, we need a lemma:

\begin{lem}
\label{zero diagonal}
Let $B\in\Fnn$ be symmetric.
Suppose that
$\apr(B) = a_1{\tt N}a_3 \cdots a_{n-1}$ and
$\epr(B) = {\tt N}\ell_2\ell_3 \cdots \ell_n$.
Then $\apr(B)$ does not contain $\tt A$.
\end{lem}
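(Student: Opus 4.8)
The plan is to show that the hypotheses $\epr(B) = {\tt N}\ell_2 \cdots \ell_n$ and $a_2 = {\tt N}$ force a very rigid structure on $B$, and then to argue that an entry ${\tt A}$ anywhere in $\apr(B)$ would contradict that structure. First I would record what $\epr(B)$ beginning with ${\tt N}$ means: every diagonal entry of $B$ is zero. Next I would use $a_2 = {\tt N}$: every almost-principal minor of order $2$ vanishes. A typical such minor is $\det B[\{p,i\},\{p,j\}] = b_{pp}b_{ij} - b_{pj}b_{ip}$; since $b_{pp} = 0$, this reduces to $-b_{pj}b_{ip}$. So $a_2 = {\tt N}$ says $b_{pj}b_{ip} = 0$ for all choices of distinct indices $p,i,j$. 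The key combinatorial step is to translate this into a statement about the support of $B$ (equivalently, about the graph whose edges are the nonzero off-diagonal entries): if $b_{ip} \neq 0$ and $b_{pj} \neq 0$ with $i \neq j$, we get a contradiction, so in the graph of $B$ no vertex $p$ can have two distinct neighbors. Hence every connected component of the graph of $B$ has at most one edge, i.e.\ $B$ is permutationally similar to a direct sum of $2\times 2$ blocks $\begin{bmatrix}0 & c\\ c & 0\end{bmatrix}$ (with $c \neq 0$) and $1\times 1$ zero blocks.

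Once $B$ is known to have this block form (up to permutation similarity, which does not affect $\apr$), I would compute its almost-principal minors directly. Any almost-principal submatrix $B[\alpha\cup\{i\},\alpha\cup\{j\}]$ with $i\neq j$: by the block structure, the nonzero off-diagonal entries of $B$ pair up the indices, so I would examine when such a submatrix can be nonsingular. A nonsingular almost-principal submatrix of order $k$ exists only when the index set can be arranged so the matrix is (generalized-)permutation-like; in the present block form one checks that a nonsingular $k\times k$ almost-principal submatrix exists precisely for certain $k$, but crucially it is never the case that \emph{all} almost-principal minors of a given order $k$ are nonzero — for instance the submatrix $B[\alpha\cup\{i\},\alpha\cup\{j\}]$ where $i$ and $j$ lie in the same $2\times 2$ block while $\alpha$ contains an index from a different block whose partner is not in $\alpha$ will have a zero row, hence be singular. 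Therefore no $a_k$ can equal ${\tt A}$, which is exactly the claim.

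The step I expect to be the main obstacle is the clean verification that, in the direct-sum-of-transpositions form, \emph{every} order $k$ (for $2 \le k \le n-1$) admits at least one singular almost-principal submatrix — i.e.\ ruling out ${\tt A}$ rather than merely ${\tt S}$ or ${\tt N}$. For small $k$ relative to $n$ this is easy (pick indices so a zero row appears), but one must make sure the argument covers $k = n-1$ as well, where the choice of the ``extra'' index $i$ outside $\alpha$ is constrained; here I would use that $n - 1 < n$ forces at least one index to be omitted from $\alpha$, and choose which index to omit and which pair $(i,j)$ to use so as to create either a zero row or a repeated-structure singularity. A minor secondary point to handle carefully is the degenerate case where the graph of $B$ has no edges at all, i.e.\ $B = O_n$: then $\apr(B) = {\tt NN}\cdots{\tt N}$, which trivially contains no ${\tt A}$, so that case is immediate and should be disposed of first.
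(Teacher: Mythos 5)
Your plan is correct and runs on the same engine as the paper's proof: the zero diagonal (from $\epr(B)$ starting with $\tt N$) together with $a_2=\tt N$ forces each row of $B$ to have at most one nonzero off-diagonal entry, and then zero rows in suitably chosen almost-principal submatrices rule out $\tt A$ at every order. The difference is one of economy. You push the structural step all the way to a canonical form (up to permutation similarity, a direct sum of nonzero scalar multiples of $A(K_2)$ and zero blocks), and then must check every order in that form; the paper instead fixes a single nonzero entry, WLOG $b_{12}\neq 0$, deduces only that $b_{1j}=b_{2j}=0$ for $j\geq 3$, notes $a_1\neq\tt A$, and for each $k\geq 2$ exhibits the one singular almost-principal submatrix $B[\{1,\dots,k\},\{1,\dots,k+1\}\setminus\{2\}]$, whose first row is zero because $b_{11}=0$ and row $1$ vanishes off column $2$. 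That single family handles all orders, including $k=n-1$, uniformly, which is exactly the point you flag as the main obstacle. Indeed your first recipe (take $i,j$ in one block and $\alpha$ containing an index of another block whose partner lies outside $\alpha$) can fail at $k=n-1$ when the blocks exhaust the index set: for $B=A(K_2)\oplus A(K_2)$ with $i=1$, $j=2$, $\alpha=\{3,4\}$ the resulting submatrix is nonsingular. The robust fix, equivalent to the paper's construction, is to keep one endpoint $u$ of an edge among the row indices while excluding its partner $v$ from the column indices, which is always possible since $k\leq n-1$; with that adjustment your argument closes completely.
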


\bpf
If $B=O_n$, then the desired conclusion follows
by noting that $\apr(B) = \tt NN\overline{N}$.
Suppose that $B \neq O_n$, and
let $B = [b_{ij}]$.
By hypothesis, $b_{ii} = 0$ for all $i$,
implying that $B$ must contain a nonzero off-diagonal entry.
Without loss of generality, we may assume that $b_{12} \neq 0$.
Since every order-2 almost-principal minor of $B$ is zero,
$\det B[\{1,2\}, \{1,j\}] = 0$ and
$\det B[\{1,2\}, \{2,j\}] = 0$ for all
$3 \leq j \leq n$.
Then, as
$\det B[\{1,2\}, \{1,j\}] = - b_{1j}b_{21} = - b_{1j}b_{12}$ and
$\det B[\{1,2\}, \{2,j\}] = b_{12}b_{2j}$,
$b_{1j} = b_{2j} = 0$ for all $3 \leq j \leq n$.
Since $B$ is symmetric,
$b_{j1} = b_{j2} = 0$ for all $3 \leq j \leq n$, implying that
$B$ is a block-diagonal matrix with a $2 \times 2$ block.
It follows that $a_1 \neq \tt A$.
Now, note that, for all $k \in \{2, 3, \dots, n-1\}$,
the $k \times k$ almost-principal submatrix
$B[[k], [k+1] \setminus \{2\}]$
(where $[p] := \{1,2, \dots , p\}$) is singular,
since its first row is zero, as $b_{11}=0$.
Hence, $a_{k} \neq \tt A$ for all $k \geq 2$.
\epf

\begin{thm}
\label{NA}
The sequence $\tt NA$ does not occur as a subsequence of the apr-sequence of a symmetric matrix over a field $\F$.
\end{thm}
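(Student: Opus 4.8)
The plan is to reduce to the situation already handled by Lemma~\ref{zero diagonal}. Suppose for contradiction that $\apr(B) = a_1 a_2 \cdots a_{n-1}$ with $a_k = {\tt N}$ and $a_{k+1} = {\tt A}$ for some $k$. First I would dispose of the case $k = 1$ using Observation~\ref{N...}: if $a_1 = {\tt N}$ then $B$ is diagonal and $\apr(B) = {\tt NN}\overline{{\tt N}}$, so no ${\tt A}$ can follow; hence we may assume $k \geq 2$, and in particular $n \geq 4$ and $a_{k-1}, a_k, a_{k+1}$ all appear. Since $a_{k+1} = {\tt A}$, there is a nonsingular almost-principal submatrix of order $k+1$, so certainly $B$ is not diagonal and $\rank(B) \geq k+1$ (in fact $\geq k+2$ by looking at the $(k+2)\times(k+2)$ principal submatrix containing it, though $k+1$ suffices).

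The key idea is to pass to a Schur complement that isolates the offending ${\tt N}$ followed by ${\tt A}$ and has a zero diagonal entry, so that Lemma~\ref{zero diagonal} can be invoked. Since $\rank(B) \geq k+1 > k-1$, and the rank of a symmetric matrix is principal (Theorem~\ref{thm: rank of a symm mtx}), $B$ has a nonsingular $(k-1)\times(k-1)$ principal submatrix $B[\gamma]$ with $|\gamma| = k-1$. Form $C = B/B[\gamma]$, an $(n-k+1)\times(n-k+1)$ symmetric matrix of rank $\rank(B) - (k-1) \geq 2$, and assume $C$ inherits its indexing from $B$. By the Schur Complement Corollary (Corollary~\ref{schurAN}), since $a_k = {\tt N}$, we get $[\apr(C)]_1 = {\tt N}$; and since $a_{k+1} = {\tt A}$, we get $[\apr(C)]_2 = {\tt A}$. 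Writing $\apr(C) = a'_1 a'_2 \cdots$, we have $a'_1 = {\tt N}$ and $a'_2 = {\tt A}$. The point of having $a'_1 = {\tt N}$ is that every order-$1$ almost-principal minor of $C$ vanishes; but those are just the off-diagonal entries of the $2\times 2$ principal submatrices — they tell us nothing directly. What we actually need is a zero diagonal entry of $C$. Here I would argue: if $C$ had no zero diagonal entry, pick any index $p$ in $C$; then $C$ has a $1\times1$ nonsingular principal submatrix, and using $a'_1 = {\tt N}$ together with the structure (as in the proof of the $\tt NN$ Theorem, where $a'_1 = {\tt N}$ forces the relevant Schur complement to be diagonal) one can contradict $a'_2 = {\tt A}$ — but this is delicate, so instead I would look for the zero diagonal more carefully, or alternatively take a further Schur complement of $C$ by a single diagonal entry when one exists and iterate. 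Cleaner: since $a'_1 = {\tt N}$, Observation~\ref{N...} applied to $C$ would give $\apr(C) = {\tt NN}\overline{{\tt N}}$ if the first symbol were forced by diagonality — but $a'_1 = {\tt N}$ only says the off-diagonal entries of $2\times 2$ principal submatrices vanish, i.e. $C$ is diagonal! That is exactly Observation~\ref{N...}. So $C$ is diagonal, hence $\apr(C) = {\tt NN}\overline{{\tt N}}$, contradicting $a'_2 = {\tt A}$.

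So the real proof is shorter than the Schur-complement detour suggests, and the main obstacle is simply bookkeeping: confirming that $a'_2$ is genuinely defined (needs $n - k + 1 \geq 3$, i.e. $k \leq n - 2$, which holds because $a_{k+1}$ appears so $k+1 \leq n-1$) and that Corollary~\ref{schurAN} legitimately transfers both an ${\tt N}$ and an ${\tt A}$ across the Schur complement. I would therefore structure the final proof as: (1) handle $k=1$ via Observation~\ref{N...}; (2) for $k \geq 2$, extract the $(k-1)\times(k-1)$ nonsingular principal submatrix $B[\gamma]$ and form $C = B/B[\gamma]$; (3) apply Corollary~\ref{schurAN} to get $[\apr(C)]_1 = {\tt N}$ and $[\apr(C)]_2 = {\tt A}$; (4) apply Observation~\ref{N...} to $C$ to conclude $C$ is diagonal, so $[\apr(C)]_2 = {\tt N}$, contradiction. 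Lemma~\ref{zero diagonal} then appears not to be needed for this particular theorem, though it would be the tool if one instead wanted to forbid $\tt NA$ without ${\tt N}$ being the \emph{first} symbol of the relevant subsequence; I would keep an eye on whether the intended proof routes through it, since the lemma's hypothesis ($\epr(B)$ starting with ${\tt N}$, i.e. zero diagonal of $B$ itself) suggests the authors may prefer to reduce the whole matrix, not a Schur complement, to the zero-diagonal case.
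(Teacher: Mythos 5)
Your argument only covers the easy half of the theorem, and the step where it breaks is the claim that, because $\rank(B)\geq k+1 > k-1$ and the rank of a symmetric matrix is principal, $B$ must have a nonsingular $(k-1)\times(k-1)$ principal submatrix $B[\gamma]$. Theorem~\ref{thm: rank of a symm mtx} guarantees a nonsingular principal submatrix of order exactly $\rank(B)$; it does not guarantee one of every smaller order, and for symmetric matrices the orders of nonsingular principal submatrices need not form an interval. For instance, $A(K_2)\oplus A(K_2)$ has rank $4$ but no nonsingular principal submatrix of order $1$ or $3$ (its epr-sequence is $\tt NSNA$). So from $a_{k+1}={\tt A}$ you cannot conclude that $\ell_{k-1}\neq{\tt N}$, i.e.\ that the $B[\gamma]$ you Schur-complement by exists. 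What you have proved (correctly, via Corollary~\ref{schurAN} and Observation~\ref{N...}) is precisely the contrapositive step with which the paper opens: if $\ell_{k-1}\neq{\tt N}$, then $\apr(B/B[\gamma])={\tt NA}\cdots$ contradicts Observation~\ref{N...}; hence $\ell_{k-1}={\tt N}$. The substantive case, $\ell_{k-1}={\tt N}$, is left untouched by your proposal.

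That remaining case is where the paper's extra machinery comes in, and it is why your closing remark that Lemma~\ref{zero diagonal} ``appears not to be needed'' is mistaken. With $\ell_{k-1}={\tt N}$, the paper first uses Lemma~\ref{zero diagonal} to rule out $k=2$ (so $k-1\geq 2$), then uses $\rank(B)\geq k+1$, principality of the rank, and the $\tt NN$ Theorem for epr-sequences (Theorem~\ref{NN Thm for epr}) to get $\ell_{k-2}\neq{\tt N}$, and finally forms $C'=B/B[\mu]$ with $|\mu|=k-2$: by Corollary~\ref{schurAN}, $\apr(C')={\tt YNA}\cdots$, while by Theorem~\ref{schur} and $\ell_{k-1}={\tt N}$, $\epr(C')$ begins with $\tt N$, so Lemma~\ref{zero diagonal} applied to $C'$ gives the contradiction. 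In short, your diagonal-Schur-complement observation is sound but only disposes of the case the paper settles in its first few lines; the zero-diagonal case, which Lemma~\ref{zero diagonal} exists to handle, is a genuine gap in your proof.
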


\bpf
Let $B\in\Fnn$ be symmetric,
$\apr(B) = a_1a_2 \cdots a_{n-1}$ and
$\epr(B) = \ell_1\ell_2 \cdots \ell_{n}$.
Suppose to the contrary that $a_ka_{k+1} = \tt NA$ for some $k$.
By Observation \ref{N...}, $k \geq 2$.
We now show that $\ell_{k-1}= \tt N$.
Suppose to the contrary that $\ell_{k-1} \neq \tt N$.
Let $B[\gamma]$ be nonsingular with $|\gamma| = k-1$.
By the Schur Complement Corollary,
$\apr(B/B[\gamma]) = \tt NA \cdots $,
which contradicts Observation \ref{N...}.
Hence, $\ell_{k-1}= \tt N$.
By Lemma \ref{zero diagonal}, $k-1 \geq 2$.
Since $a_{k+1} = \tt A$, $\rank(B) \geq k+1$.
Then, as the rank of $B$ is principal,
the $\tt NN$ Theorem for epr-Sequences implies that
$\ell_{k-2} \neq \tt N$.
Let $B[\mu]$ be nonsingular with $|\mu| = k-2$.
Since $a_ka_{k+1} = \tt NA$,
the Schur Complement Corollary implies that
$\apr(B/B[\mu]) = \tt YNA \cdots$ for some $\tt Y \in \{\tt A,N,S\}$.
Since $\ell_{k-1} = \tt N$,
the Schur Complement Theorem implies that
$\epr(B/B[\mu]) = \tt N \cdots$, which contradicts
Lemma \ref{zero diagonal}.
\epf

The fact that an analogous version of
Theorem \ref{qpr: N theorem} does not hold in general for apr-sequences
raises a natural question:
What restrictions (if any) can be added to the hypothesis of
Theorem \ref{qpr: N theorem} in order to
have its conclusion hold for apr-sequences?
Requiring the apr-sequence to contain $\tt A$ as a
subsequence is one such restriction:

\begin{thm}\label{...A...}
Let $B \in \Fnn$ be symmetric.
Suppose that $\apr(B) = a_1 a_2  \cdots a_{n-1}$ and
$a_{k} = \tt A$ for some $k$.
Then neither $\tt NA$ nor $\tt NS$ is a subsequence of $\apr(B)$.
Equivalently, if $a_t = \tt N$ for some $t$,
then $a_j = \tt N$ for all $j \geq t$.
\end{thm}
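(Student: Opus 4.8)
Theorem~\ref{NA} already shows that $\tt NA$ is never a two‑term subsequence of an apr‑sequence, and ``neither $\tt NA$ nor $\tt NS$ is a two‑term subsequence of $\apr(B)$'' is logically equivalent to ``$a_t=\tt N$ implies $a_j=\tt N$ for all $j\ge t$'' (if $a_t=\tt N$ then $a_{t+1}\notin\{\tt A,\tt S\}$, so $a_{t+1}=\tt N$, and one iterates; the converse is immediate). So the plan is to prove: \emph{if $a_k=\tt A$ for some $k$, then $\apr(B)$ has no two‑term subsequence $\tt NS$.} I would assume for contradiction that $a_k=\tt A$ and $a_ta_{t+1}=\tt NS$. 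Observation~\ref{N...} forces $t\ge 2$, and since $a_k=\tt A$ differs from both $a_t$ and $a_{t+1}$ we get $k\le t-1$ or $k\ge t+2$; these two cases are treated by quite different mechanisms.

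\noindent\textbf{The case $k\ge t+2$.} This mirrors the proof of Theorem~\ref{NA}. From $a_{t+1}=\tt S$ we get $\rank(B)\ge t+1$, so the last non‑$\tt N$ entry of $\epr(B)=\ell_1\cdots\ell_n$ has index $\ge t+1$; hence, by the $\tt NN$ Theorem for Epr‑Sequences together with $\ell_{\rank(B)}\ne\tt N$, there are no two consecutive $\tt N$s among $\ell_1,\dots,\ell_{\rank(B)-1}$. Let $p$ be the largest index in $\{0,1,\dots,t-1\}$ with $\ell_p\ne\tt N$ (convention: $\ell_0\ne\tt N$, the empty principal submatrix being nonsingular); this forces $p\in\{t-2,t-1\}$. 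Fix a nonsingular principal submatrix $B[\gamma]$ with $|\gamma|=p$, put $C=B/B[\gamma]$, and apply the Schur Complement Corollary: $[\apr(C)]_{t-p}=\tt N$ and $[\apr(C)]_{k-p}=\tt A$. If $p=t-1$, then $[\apr(C)]_1=\tt N$, so $\apr(C)$ is all $\tt N$ by Observation~\ref{N...}, contradicting $[\apr(C)]_{k-p}=\tt A$. If $p=t-2$, then $\ell_{t-1}=\tt N$, so by the Schur Complement Theorem $C$ has zero diagonal while $[\apr(C)]_2=\tt N$, and Lemma~\ref{zero diagonal} then forbids $\tt A$ in $\apr(C)$ --- contradiction. (When $p=0$, i.e.\ $t=2$, Lemma~\ref{zero diagonal} is applied to $C=B$ directly.)

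\noindent\textbf{The case $k\le t-1$, $B$ nonsingular.} Here I would use the Inverse Theorem: $\apr(B^{-1})=a_{n-1}a_{n-2}\cdots a_1$. First, $a_{t-1}\ne\tt N$, for otherwise $a_{t-1}a_t=\tt NN$ would, by the $\tt NN$ Theorem, force $a_{t+1}=\tt N\ne\tt S$. So in $\apr(B^{-1})$ the entry immediately following the $\tt N$ that came from $a_t$ equals $a_{t-1}\in\{\tt A,\tt S\}$: if $a_{t-1}=\tt A$, then $\apr(B^{-1})$ contains $\tt NA$, contradicting Theorem~\ref{NA}; if $a_{t-1}=\tt S$, then $\apr(B^{-1})$ contains $\tt NS$, and the $\tt A$ that came from $a_k$ now lies at least two positions to its right (or $k=t-1$, in which case $a_{t-1}=\tt A$ and the previous alternative applies), so $B^{-1}$ falls under the already‑settled case $k\ge t+2$.

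\noindent\textbf{The case $k\le t-1$, $B$ singular.} I expect this to be the main obstacle: the difficulty is that $\tt S$ is not tracked by the Schur Complement Corollary, so the $\tt S$ at position $t+1$ cannot simply be transported by Schur complementation. I would argue by taking a minimal counterexample $B$ to the whole statement, say of order $n$. Fix, using $a_{t+1}=\tt S$, a nonsingular almost‑principal submatrix $B[\delta\cup\{u\},\delta\cup\{v\}]$ of order $t+1$ and set $S=\delta\cup\{u,v\}$, so $|S|=t+2$. For any index set $\sigma\supseteq S$ with $\max(t+2,6)\le|\sigma|\le n$, the Inheritance Theorem gives $[\apr(B[\sigma])]_k=\tt A$ and $[\apr(B[\sigma])]_t=\tt N$, while $B[\sigma]$ still contains the order‑$(t+1)$ witness, so $[\apr(B[\sigma])]_{t+1}\ne\tt N$ and hence equals $\tt S$ by Theorem~\ref{NA}; thus $B[\sigma]$ again realizes the forbidden configuration. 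If some such $\sigma$ has $|\sigma|<n$, this contradicts minimality, leaving only $n=t+2$ or $n\le 6$. The finitely many cases $n\le 6$ I would dispatch by hand; for example, whenever $a_1=\tt A$ and $a_2=\tt N$ the vanishing of every order‑$2$ almost‑principal minor forces $b_{ii}b_{jj}=b_{ij}^2$ for all $i\ne j$, whence $B$ has rank $1$, incompatible with an $\tt S$ at any position $\ge 3$, which settles the subcase $(k,t)=(1,2)$ for every $n$. When $n=t+2$, the final entry $a_{n-1}=a_{t+1}=\tt S$ forces $\rank(B)\ge n-1$, hence $\rank(B)=n-1$; writing $\ker B=\langle\mathbf z\rangle$, the rank‑one adjugate $\operatorname{adj}(B)=c\,\mathbf z\mathbf z^{T}$ has off‑diagonal entries $c\,z_iz_j$, which are precisely the order‑$(n-1)$ almost‑principal minors up to sign, so $\mathbf z$ has at least two nonzero and at least one zero coordinate. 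Combining this with $a_k=\tt A$ (which, via the column dependence $\sum_j z_jB_{\cdot j}=0$, forces $|\supp(\mathbf z)|>k$, since otherwise some order‑$k$ almost‑principal submatrix would have a column dependence and hence a zero determinant) and with $a_{n-2}=\tt N$ pins $B$ down enough to finish by a short direct computation. Throughout, the essential tools are the Schur Complement Corollary, the Inheritance Theorem, Theorem~\ref{NA}, and the Inverse Theorem; the delicate point is always to keep the $\tt S$ at position $t+1$ alive, which is exactly why the singular subcase requires the surviving‑witness device and a hands‑on treatment of the few configurations admitting no sufficiently large proper principal submatrix.
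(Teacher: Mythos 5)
Your reduction, your case $k\ge t+2$ (which, up to reorganization, is the paper's own argument: Schur complementation at a largest nonsingular principal submatrix of order $t-1$ or $t-2$, then Observation~\ref{N...} or Lemma~\ref{zero diagonal}), and your nonsingular subcase of $k\le t-1$ (pass to $B^{-1}$ via the Inverse Theorem, kill $a_{t-1}={\tt A}$ with Theorem~\ref{NA}, and feed the reflected sequence back into the settled case) are all sound. The genuine gap is the singular subcase of $k\le t-1$, which you yourself flag as the main obstacle: the minimal-counterexample device only shrinks a counterexample while $n>\max(t+2,6)$, and the residual configurations it leaves behind --- $n=t+2$, and the small orders $n\le 6$ (e.g.\ $(k,t)=(1,3),(2,3)$ with $n=6$) --- are not actually proved. ``Dispatch by hand'' is not a finite verification over an arbitrary field, and in the $n=t+2$ case your adjugate analysis stops at structural information about $\ker B$ (at least two nonzero and one zero coordinate, support of size $>k$) with the conclusion deferred to ``a short direct computation'' that is never exhibited and is not obviously short. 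As written, the proof is incomplete exactly where the difficulty lies.

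What closes this case --- and what the paper does, with no induction and no singular/nonsingular split --- is to exploit that $\rank(B)\ge t+1$ and that the rank of a symmetric matrix is principal. If $B$ has a nonsingular $(t+1)\times(t+1)$ principal submatrix $B[\alpha]$, then by the Inheritance Theorem $\apr(B[\alpha])$ has {\tt A} in position $k$ and {\tt N} in position $t$, which is its \emph{last} position; by the Inverse Theorem $\apr(B[\alpha]^{-1})$ begins with {\tt N} yet contains an {\tt A}, contradicting Observation~\ref{N...}. If $B$ has no such submatrix, then $\ell_{t+1}={\tt N}$ in $\epr(B)$, and the ${\tt NN}$ Theorem for epr-sequences (with $\rank(B)\ge t+1$ principal) produces a nonsingular $(t+2)\times(t+2)$ principal submatrix $C$ with $\epr(C)=\cdots{\tt NA}$ and $\apr(C)=\cdots{\tt A}\cdots{\tt N}\,a'_{t+1}$; inverting $C$ and applying Lemma~\ref{zero diagonal} to $C^{-1}$ gives the contradiction. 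Note in particular that this disposes of your stuck case $n=t+2$ in two lines: there $B$ is singular with $\rank(B)=n-1=t+1$, so a nonsingular $(t+1)\times(t+1)$ principal submatrix exists and the first alternative applies. If you want to keep your architecture, replacing the minimal-counterexample section with this principal-submatrix/inversion argument would complete the proof.
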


\bpf
By Theorem \ref{NA}, $\apr(B)$ does not contain $\tt NA$.
Suppose to the contrary that
$a_{p}a_{p+1} = \tt NS$ for some $p$.
Obviously, $p \neq k$ and $p \neq k-1$.
Thus,  $p \leq k-2$ or $p \geq k+1$.
Let $\epr(B) = \ell_1\ell_2 \cdots \ell_n$.
We now examine all possibilities in two cases.

\noindent
{\bf Case 1}: $p \leq k-2$.

\noindent
By Observation \ref{N...}, $p \geq 2$.
We now show that $\ell_{p-1} = \tt N$.
If $\ell_{p-1} \neq \tt N$, then $B$ has a nonsingular
$(p-1) \times (p-1)$ principal submatrix, say, $B[\gamma]$,
which would imply that
$\apr(B/B[\gamma]) = \tt N \cdots A \cdots$
(by the Schur Complement Corollary),
contradicting Observation \ref{N...}.
Hence, $\ell_{p-1} = \tt N$.
By Lemma \ref{zero diagonal}, $p \geq 3$.
Since $a_k = \tt A$, $\rank(B) \geq k > p$.
Then, as $\ell_{p-1} = \tt N$ and the rank of $B$ is principal,
$\ell_{p-2} \neq \tt N$ (see the $\tt NN$ Theorem for epr-Sequences).
Let $B[\mu]$ be a nonsingular $(p-2) \times (p-2)$
(principal) submatrix.
Then, by the Schur Complement Corollary and Schur Complement Theorem,
$\apr(B/B[\mu]) = \tt XN \cdots A \cdots$ and
$\epr(B/B[\mu]) = \tt N \cdots$, for some $\tt X \in \{A,N,S\}$,
contradicting Lemma \ref{zero diagonal}.

\noindent
{\bf Case 2}: $p \geq k+1$.

\noindent
Since $a_{p+1} = \tt S$, $\rank(B) \geq p+1$.
We proceed by considering two cases.

\noindent
{\bf Subcase A}:
$B$ contains a nonsingular $(p+1) \times (p+1)$ principal submatrix.

\noindent
Let $B[\alpha]$ be nonsingular with $|\alpha| = p+1$.
By the Inheritance Theorem,
$\apr(B[\alpha]) = \cdots \tt A \cdots N$.
Then, by the Inverse Theorem,
$\apr((B[\alpha])^{-1}) = \tt N \cdots A \cdots$,
which contradicts Observation \ref{N...}.

\noindent
{\bf Subcase B}:
$B$ does not contain a nonsingular $(p+1) \times (p+1)$
principal submatrix.

\noindent
Clearly, $\ell_{p+1} = \tt N$.
Since $\rank(B) \geq p+1$, and because the rank of $B$ is principal,
the $\tt NN$ Theorem for epr-Sequences implies that
$B$ contains a nonsingular $(p+2) \times (p+2)$
principal submatrix, say, $C$.
Let $\apr(C) = a'_1a'_2 \cdots a'_{p+1}$
and $\epr(C) = \ell'_1 \ell'_2 \cdots \ell'_{p+1} \tt A$.
By the Inheritance Theorem,
$a'_k = \tt A$ and $a'_p = \tt N$.
Since every principal submatrix of $C$ is
also a principal submatrix of $B$,
$\ell'_{p+1} = \tt N$.
Thus far, we have that
$\apr(C) = a'_1a'_2 \cdots a'_{k-1}{\tt A} \cdots {\tt N}a'_{p+1}$
and
$\epr(C) = \ell'_1 \ell'_2 \cdots \ell'_{p} \tt NA$.
By the Inverse Theorem and the Inverse Theorem for epr-Sequences,
$\apr(C^{-1}) =
a'_{p+1}{\tt N} \cdots {\tt A}a'_{k-1} \cdots a'_{2}a'_{1}$
and
$\epr(C^{-1}) = {\tt N}\ell'_p \cdots \ell'_2 \ell'_{1} {\tt A}$,
which contradicts Lemma \ref{zero diagonal}.
\epf

\section{Sequences not containing an $\tt A$}\label{s: No As}
$\null$
\indent
In this section, we confine our attention to
the apr-sequences not containing $\tt A$ as a subsequence,
for which a complete characterization will be provided
(see Theorem \ref{No As}).
This characterization is then used to obtain a necessary condition for
a sequence to be the apr-sequence of a
symmetric matrix over an arbitrary field $\F$.
We will start by focusing on sequences that begin with $\tt SN$.
We introduce two matrices that are central to this section:
\[
L_2(a) := \mtx{1&1\\1&a} \mbox{ \ and \ \ }
A(K_2) := \mtx{0&1\\1&0},
\]
where $a \in \F$.

\begin{lem}\label{SN lemma}
Let $B \in \Fnn$ and $n \geq 3$.
Suppose that $\apr(B) = a_1 a_2  \cdots a_{n-1}$.
Then the following statements hold:
\ben
\item
If $B = J_{n-k} \oplus O	_k$
for some integer $k$ with $1 \leq k \leq n-1$, then
$\apr(B) = \tt SN\overline{N}$.
\item
If $B = L_2(a) \oplus O	_{n-2}$ for some $a \in \F$, then
$\apr(B) = \tt SN\overline{N}$.

\item
If  $B=A(K_2) \oplus A(K_2) \oplus \cdots \oplus A(K_2) \oplus O	_k$
for some integer $k$ with $0 \leq k \leq n-2$,
then $\apr(B) = \tt S\overline{NS} \hspace{0.4mm} \overline{N}$, with
$\tt \overline{N}$ containing $k$ copies of $\tt N$.
\een
\end{lem}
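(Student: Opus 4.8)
The plan is to handle each of the three cases by the same basic strategy: first compute the small-order almost-principal minors directly (orders $1$ and $2$) to pin down the start of the apr-sequence, and then argue that all higher-order almost-principal minors vanish, either by an explicit rank count or by exhibiting a zero row in every relevant submatrix. In each case the matrix is block-diagonal with very small blocks, so an almost-principal submatrix $B[\alpha\cup\{i\},\alpha\cup\{j\}]$ with $i\neq j$ will typically straddle a block boundary in a way that forces a zero row or makes the determinant factor through a singular principal piece.

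For statement (1), $B=J_{n-k}\oplus O_k$ has rank $1$. An order-$1$ almost-principal minor is just an off-diagonal entry $b_{ij}$, and since $J_{n-k}$ contributes a $1$ in position $(1,2)$ while the $O_k$ block contributes $0$s, we get $a_1=\tt S$. For $k\geq 2$, every order-$2$ almost-principal submatrix sits inside a rank-$1$ matrix, so all order-$2$ (indeed all order-$\geq 2$) almost-principal minors vanish; hence $\apr(B)=\tt SN\overline{N}$. I would phrase the vanishing of higher minors via Theorem \ref{thm: rank of a symm mtx}/the trivial observation that a $k\times k$ submatrix of a rank-$1$ matrix is singular for $k\geq 2$. (The edge case $k=1$, i.e.\ $B=J_{n-1}\oplus O_1$, still has rank $1$, so the same argument applies; I should note $n\geq 3$ guarantees there is at least an order-$2$ slot.)

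For statement (2), $L_2(a)\oplus O_{n-2}$ also has rank $\leq 2$, but now the key point is that its only nonzero entries lie in the leading $2\times 2$ block. So an order-$1$ almost-principal minor is nonzero iff it is $b_{12}=1$, giving $a_1=\tt S$ (it is not $\tt A$ because, e.g., $b_{13}=0$; here I use $n\geq 3$). For order $k\geq 2$: any almost-principal submatrix $B[\alpha\cup\{i\},\alpha\cup\{j\}]$ with $i\ne j$ must use at least $k-1\geq 1$ common indices plus two distinct ones, so it uses at least $3$ distinct indices, at least one of which lies outside $\{1,2\}$; the row/column of $B$ at that index is zero, so the submatrix has a zero row or zero column and its determinant is $0$. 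Hence $a_k=\tt N$ for all $k\geq 2$, giving $\apr(B)=\tt SN\overline{N}$.

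Statement (3) is the one requiring the most care, and I expect it to be the main obstacle. Write $m$ for the number of $A(K_2)$ blocks, so $n=2m+k$. The claim is $\apr(B)=\tt S\,\overline{NS}\,\overline{N}$ with exactly $k$ trailing $\tt N$s; unpacking the overline notation, this should mean: $a_k=\tt A$ is never attained, $a_j$ alternates $\tt S,\tt N,\tt S,\tt N,\dots$ for $j=1,\dots$ up through order $m$ roughly, and $a_j=\tt N$ for the last $k$ values $j=n-k,\dots,n-1=2m+k-1$ — I will need to first read off precisely which orders are $\tt S$, which are $\tt N$, from the structure, and state it cleanly. The matrix is a direct sum of $2\times 2$ swaps, so it has rank $2m$ and is a signed permutation matrix on its support. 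The heuristic: $\det B[\alpha,\beta]$ for $|\alpha|=|\beta|$ is $\pm1$ if the bipartite pattern on $\alpha\times\beta$ restricted to the support of $B$ is a perfect matching, and $0$ otherwise. For an almost-principal choice $\alpha=\gamma\cup\{i\}$, $\beta=\gamma\cup\{j\}$: the presence of the off-diagonal-but-one index pair, together with the fact that within each $A(K_2)$ block the only support positions are the two off-diagonal ones, dictates when a perfect matching exists. I would argue: at odd orders $k$ one can pick $\gamma$ to be a union of whole blocks plus force $\{i,j\}$ to be the two indices of one more block (so the pattern is a perfect matching, minor $=\pm1$) — giving a nonzero minor; one can also always find a singular one (e.g.\ by including an $O_k$ index, available since $k\geq 0$, or by a parity obstruction inside a partially-covered block) — giving a zero minor; hence $\tt S$. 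At even orders the parity forces every almost-principal pattern to have an unmatched index inside some block, so the minor is always $0$, giving $\tt N$. The last $k$ orders are $\tt N$ because once the order exceeds $2m$ the submatrix exceeds the rank, and more carefully for orders between... — actually the cleanest route for the trailing $\tt N$s is a zero-row argument: for large $k$ every almost-principal submatrix must include a row indexed from the $O_k$ block or must leave a block-row unmatched. The hard part will be bookkeeping the alternation precisely (which orders are $\tt S$ vs $\tt N$) and proving rigorously that at even orders \emph{no} almost-principal minor is nonzero — this is a parity/matching argument on the block structure that I would set up via the perfect-matching criterion for permutation-pattern determinants, possibly by induction on $m$ peeling off one $A(K_2)$ block at a time using the Schur Complement Corollary (Corollary \ref{schurAN}) or a direct Laplace expansion along the split-up block.
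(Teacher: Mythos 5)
Your treatments of statements (1) and (2) are fine (the paper simply omits them as trivial), but statement (3) --- the only part for which the paper gives an actual proof --- is where your proposal has a genuine gap: it is a plan rather than an argument. The two claims that carry the whole statement are left unproven: first, that at every even order \emph{no} almost-principal minor is nonzero is asserted only as ``parity forces every almost-principal pattern to have an unmatched index inside some block''; second, the existence of a singular almost-principal minor at each odd order when $k=0$ is delegated to an unspecified ``parity obstruction inside a partially-covered block.'' You also explicitly defer deciding which positions of the sequence are $\tt S$ and which are $\tt N$, which is precisely the content of the conclusion. Your perfect-matching criterion is sound in principle (the direct sum of $A(K_2)$'s is a symmetric permutation matrix on its support, so a minor is nonzero iff the pairing involution maps the row index set onto the column index set), but that analysis still has to be carried out, and as written nothing in the proposal would let a reader verify the even-order case.

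For comparison, the paper closes these gaps cheaply. It first reduces to $C=A(K_2)\oplus\cdots\oplus A(K_2)$ with at least two blocks, using Observation \ref{append} to account for the trailing $O_k$ (the case $A(K_2)\oplus O_k$ is handled separately by a zero-row remark). For odd $p$ it exhibits the nonsingular almost-principal submatrix $C[\{1,\dots,p\},\{1,\dots,p+1\}\setminus\{p\}]=A(K_2)\oplus\cdots\oplus A(K_2)\oplus J_1$ and the singular one $C[\{1,\dots,p\},\{2,\dots,p+1\}]$, whose second row is zero. For even $q$ it observes that every odd-order principal submatrix of $C$ has a zero row and a zero column (some $2\times 2$ block contributes exactly one index), and that a $q\times q$ almost-principal submatrix $C[\alpha\cup\{i\},\alpha\cup\{j\}]$ is an almost-principal submatrix of the odd-order principal submatrix $C[\alpha\cup\{i,j\}]$, hence retains a zero row or a zero column and is singular. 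Something of this concreteness (or a completed matching argument) is what your statement (3) still needs before it can count as a proof.
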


\bpf
The verification of Statements (1) and (2) is omitted, 
since it is trivial.
Statement (3) is established by examining two cases.
First, consider the case when
$B=A(K_2) \oplus O_k$ with $k \geq 1$.
In that case, obviously, $a_1 = \tt S$, and,
since every almost-principal submatrix of $B$ of order 2 or
larger would contain a zero row or a zero column,
we would have $\apr(B) = {\tt SN} \overline{\tt N}$.

Finally,
to establish the remaining cases of Statements (3),
by Observation \ref{append},
it suffices to show that the matrix
\[
C=A(K_2) \oplus A(K_2) \oplus \cdots \oplus A(K_2),
\]
with at least two copies of $A(K_2)$,
has apr-sequence $\tt SNS\overline{NS}$.
Let $\apr(C) = a'_1a'_2 \cdots a'_{m-1}$,
where $m \geq 4$  (thus, $C$ is an $m\times m$ matrix and $m$ is even).
Clearly, $a'_1 = \tt S$.
Let $p$ be an odd integer with $3 \leq p \leq m-1$.
We now show that $a'_p = \tt S$.
Notice that
\[
B[\{1,2, \dots, p\}, \{1,2, \dots, p+1\}\setminus \{p\}] =
A(K_2) \oplus A(K_2) \oplus \cdots \oplus A(K_2) \oplus J_1
\]
(with $\frac{p-1}{2}$ copies of $A(K_2)$), which is nonsingular,
and that
$B[\{1,2, \dots, p\}, \{2, 3,\dots, p+1\}]$  is singular,
since its second row consists entirely of zeros (i.e., it is a zero row).
Hence, $a'_p = \tt S$, as desired.

We now show that $a'_q = \tt N$ if $q$ is even.
First, observe that any principal submatrix of $B$ of
odd order contains a zero row and a zero column.
Let $q$ be an even integer with $2 \leq q \leq m-2$,
and suppose that
$B[\alpha \cup \{i\}, \alpha \cup \{j\}]$ is a $q \times q$
almost-principal submatrix;
thus, $i \neq j$ and $|\alpha| = q-1$.
Hence,
$B[\alpha \cup \{i,j\}]$ is a $(q+1) \times (q+1)$
(principal) submatrix of odd order,
implying that  $B[\alpha \cup \{i,j\}]$ contains
a zero row and a zero column.
Hence, any $q \times q$ almost-principal submatrix of
$B[\alpha \cup \{i,j\}]$ contains either
a zero row or a zero column.
Then, as
$B[\alpha \cup \{i\}, \alpha \cup \{j\}]$ is a submatrix of
$B[\alpha \cup \{i,j\}]$,
$B[\alpha \cup \{i\}, \alpha \cup \{j\}]$ is singular.
It follows that $a'_q= \tt N$.
We conclude that
$\apr(C)=\tt SNS\overline{NS}$, as desired.
\epf

\begin{prop}\label{SN}
Let $B \in \Fnn$ be symmetric.
Suppose that $\apr(B) = {\tt SN}a_3 a_4 a_5 \cdots a_{n-1}$.
Then one of the following statements holds:
\ben
\item
There exists a generalized permutation matrix $P$ and
a nonzero constant $c \in \F$ such that
$cP^{T}BP = J_{n-k} \oplus O	_k$
for some integer $k$ with $1 \leq k \leq n-1$.
Moreover, $\apr(B) = \tt SN\overline{N}$.
\item
There exists a generalized permutation matrix $P$,
a nonzero constant $c \in \F$ and $a \in \F$ such that
$cP^{T}BP = L_2(a) \oplus O	_{n-2}$.
Moreover, $\apr(B) = \tt SN\overline{N}$.

\item
There exists a generalized permutation matrix $P$ such that
$P^TBP=
A(K_2) \oplus A(K_2) \oplus \cdots \oplus A(K_2) \oplus O	_k$
for some integer $k$ with $0 \leq k \leq n-2$.
Moreover, $\apr(B) = \tt S\overline{NS} \hspace{0.4mm} \overline{N}$,
with $\tt \overline{N}$ containing $k$ copies of $\tt N$.
\een
\end{prop}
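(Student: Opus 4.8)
The plan is to read the structural meaning of the hypothesis off the first two letters of $\apr(B)$ and then bring $B$ into one of the three listed canonical forms by a permutation, a scaling of rows and columns, and an overall nonzero scalar; the three ``Moreover'' clauses will then be immediate from Lemma~\ref{SN lemma}, once one observes (routinely) that the apr-sequence is unaffected by generalized permutation similarity and by multiplication by a nonzero scalar. Write $B=[b_{ij}]$. Since $a_1={\tt S}$, $B$ has both a nonzero and a zero off-diagonal entry, so $B$ is not diagonal; since $a_2={\tt N}$, every order-$2$ almost-principal minor vanishes, i.e. $\det B[\{i,k\},\{j,k\}]=0$, which (using $b_{kj}=b_{jk}$) says $b_{ij}b_{kk}=b_{ik}b_{jk}$ for all pairwise distinct $i,j,k$.

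I would then split on the diagonal of $B$. If every $b_{ii}=0$, the identity becomes $b_{ik}b_{jk}=0$ for all distinct $i,j,k$, so every row of $B$ has at most one (necessarily off-diagonal) nonzero entry, whence the graph of nonzero entries of $B$ is a union of disjoint edges and, after a permutation, $B$ is a direct sum of $2\times2$ blocks $\mtx{0&c\\c&0}$ ($c\neq0$) and a zero block, at least one block being present since $a_1={\tt S}$; scaling the second row and column of each such block by $c^{-1}$ turns it into $A(K_2)$, which is conclusion~(3), and Lemma~\ref{SN lemma}(3) then supplies $\apr(B)$. Otherwise some diagonal entry is nonzero, and after a permutation I may assume $b_{11}\neq0$; taking $k=1$ in the identity gives $b_{ij}=b_{i1}b_{j1}/b_{11}$ for all distinct $i,j\in\{2,\dots,n\}$, so $B$ is determined off the diagonal by its first column $v=(b_{11},b_{21},\dots,b_{n1})^{T}$, and I would subdivide on the number of nonzero entries among $b_{21},\dots,b_{n1}$.

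If at least two of $b_{21},\dots,b_{n1}$ are nonzero, then for each $i\in\{2,\dots,n\}$ I can choose $k\in\{2,\dots,n\}$ with $k\neq i$ and $b_{k1}\neq0$, and the relation $b_{k1}b_{ii}=b_{i1}b_{ik}$ (from $\det B[\{1,i\},\{k,i\}]=0$ and symmetry) combined with $b_{ik}=b_{i1}b_{k1}/b_{11}$ forces $b_{ii}=b_{i1}^{2}/b_{11}$; hence $b_{ij}=v_iv_j/b_{11}$ for all $i,j$, i.e. $B=\tfrac1{b_{11}}vv^{T}$ has rank~$1$. Bringing $\supp(v)$ to the front by a permutation and then scaling row~$i$ and column~$i$ by $v_i^{-1}$ for $i\in\supp(v)$ turns $\tfrac1{b_{11}}vv^{T}$ into $\tfrac1{b_{11}}J_{n-k}$, so $b_{11}\,P^{T}BP=J_{n-k}\oplus O_k$ for a suitable generalized permutation $P$; this is conclusion~(1), and Lemma~\ref{SN lemma}(1) gives the sequence. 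If exactly one of $b_{21},\dots,b_{n1}$ is nonzero, say $b_{21}\neq0$, then for $i\geq3$ the $k=1$ relations give $b_{1i}=b_{i1}=0$, $b_{2i}=0$, and $b_{ij}=0$ for distinct $i,j\geq3$, while $\det B[\{1,i\},\{2,i\}]=b_{12}b_{ii}=0$ together with $b_{12}=b_{21}\neq0$ forces $b_{ii}=0$; thus $B=M\oplus O_{n-2}$ with $M=\mtx{b_{11}&b_{21}\\b_{21}&b_{22}}$ and $b_{11},b_{21}\neq0$, and scaling the rows and columns of $M$ followed by an overall scalar brings $M$ to $L_2(a)$ with $a=b_{11}b_{22}/b_{21}^{2}$ --- conclusion~(2), with $\apr(B)$ coming from Lemma~\ref{SN lemma}(2). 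Finally, if none of $b_{21},\dots,b_{n1}$ is nonzero, then $b_{1j}=0$ for $j\geq2$ and $b_{ij}=0$ for distinct $i,j\geq2$, so $B$ is diagonal, contradicting $a_1={\tt S}$; this subcase does not arise.

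The elementary parts are the three scaling computations and the invariance of apr-sequences under generalized permutation similarity and nonzero scalars. The step needing care is the subdivision in the non-diagonal case: one must see that ``at least two versus exactly one nonzero off-diagonal entry in the first column'' is precisely what separates the rank-one form $J_{n-k}\oplus O_k$ from $L_2(a)\oplus O_{n-2}$ --- the identity $b_{ii}=b_{i1}^{2}/b_{11}$ goes through for \emph{every} $i$ only when two such entries are available, and with a single one the argument fails exactly at the diagonal position paired with it (leaving $b_{22}$ free) --- and one must also recognize the ``no nonzero entry'' subcase as a contradiction rather than a fourth form.
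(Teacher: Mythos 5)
Your proposal is correct, but it is organized differently from the paper's proof, so a comparison is worthwhile. The paper splits on whether some row of $B$ has more than one nonzero off-diagonal entry: in that case it deduces $b_{11}\neq 0$ from the vanishing order-$2$ almost-principal minors, normalizes, and reconstructs $J_{n-k}\oplus O_k$; otherwise it writes $B=B[\{1,2\}]\oplus B[\{3,\dots,n\}]$, shows the complementary block has zero diagonal, and iteratively peels off $A(K_2)$ summands (or recognizes $L_2(a)$) to land in forms (2) or (3). You instead split on the diagonal: when the diagonal is identically zero the identity $b_{kk}b_{ij}=b_{ik}b_{jk}$ collapses to $b_{ik}b_{jk}=0$, which makes the off-diagonal support a partial matching and yields form (3) in one stroke, avoiding the paper's iterative peeling; when some $b_{11}\neq 0$, the identity gives $b_{ij}=b_{i1}b_{j1}/b_{11}$, and counting the nonzero off-diagonal entries of the first column cleanly separates the rank-one case $B=\tfrac{1}{b_{11}}vv^{T}$ (form (1)) from $L_2(a)\oplus O_{n-2}$ (form (2)) and the impossible all-zero case. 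Both routes rest on the same order-$2$ identity, the same invariance facts, and finish with Lemma~\ref{SN lemma}; yours is more structural (matching, rank-one factorization) and shorter, the paper's more step-by-step. One small point you should add in the rank-one case: you must rule out $\supp(v)=\{1,\dots,n\}$, since otherwise $cP^{T}BP=J_n$, which falls outside the range $1\leq k\leq n-1$ in statement (1) (and indeed would force $a_1={\tt A}$); this is immediate from the existence of a zero off-diagonal entry guaranteed by $a_1={\tt S}$, a fact you already recorded at the outset, so it is a one-line fix rather than a genuine gap.
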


\bpf
Suppose that $B=[b_{ij}]$.
Since $\apr(B)$ begins with $\tt S$,
$B$ contains at least one nonzero off-diagonal entry.
It suffices to show that the first sentence of one of
Statements (1), (2) and (3) holds,
since the remaining part of the statements follows immediately from 
Lemma \ref{SN lemma}.
We proceed by examining two cases.

\noindent
{\bf Case 1}: $B$ contains a row with
more than one nonzero off-diagonal entry.

\noindent
We now show that Statement (1) holds.
Since a simultaneous permutation of
the rows and columns of $B$ leaves $\apr(B)$ invariant,
we may assume that
the first row of $B$ contains more than one nonzero off-diagonal entry.
Furthermore, we may assume that
$b_{1j} \neq 0$ for $j \in \{2,3, \dots,p\}$
for some $p \geq 3$,
and that
$b_{1j} = b_{j1} = 0$ for $j \in \{p+1, p+2, \dots,n\}$
(note that $\{p+1, p+2, \dots,n\}$ is empty if $n=3$).
Let
$\alpha = \{2, 3, \dots, p\}$ and
$\beta = \{p+1, p+2, \dots n\}$.
Since $a_2 = \tt N$,
$b_{11}b_{23} - b_{12}b_{13} = \det B[\{1,2\}, \{1,3\}] = 0$.
Then, as $b_{12}b_{13} \neq 0$, $b_{11} \neq 0$.
Since multiplying $B$ by a nonzero constant
leaves $\apr(B)$ invariant, we may assume that $b_{11} = 1$.
Furthermore, we may assume that
$b_{1j} = b_{j1} = 1$ for all $j \in \alpha$,
as multiplying a row and column of $B$ by
a nonzero constant leaves $\apr(B)$ invariant.

Thus far, we have that
$b_{11}=1$, that
$b_{1j} = b_{j1} = 1$ for all $j \in \alpha$, and that
$b_{1j} = b_{j1} = 0$ for all $j \in \beta$.
Now, note that
for all $i,j \in\{1,2, \dots, n\}$, with $i \neq j$,
and all $t \in\{1,2, \dots, n\} \setminus \{i,j\}$,
\[
\det B[\{t,i\}, \{t,j\}] = b_{tt}b_{ij} - b_{ti}b_{tj}.
\]
Since $a_2 = \tt N$,
we have that, for $i \neq j$ and $t \in\{1,2, \dots, n\} \setminus \{i,j\}$,
\[
b_{tt}b_{ij} = b_{ti}b_{tj}.
\]
Since $b_{11} = 1$,
$b_{ij} = b_{1i}b_{1j}$ for all
$i,j \in \{2,3, \dots, n\}$ with $i \neq j$.
Thus, if $i,j \in \alpha$ and $i \neq j$, then $b_{ij}=1$,
implying that every off-diagonal entry of $B[\alpha]$ is 1.
Moreover, if
$i \in \beta$ or
$j \in \beta$,
with $i \neq j$,
then $b_{ij}=0$.
Hence, $B[\beta]$ is a diagonal matrix and
$B=B[\alpha] \oplus B[\beta]$.

We now show that
$B[\alpha] = J_p$.
Observe that if $t \in \alpha$ and
$i,j \in \{1,2, \dots, p\} \setminus\{t\}$ with $i \neq j$,
then $b_{tt} = b_{ti}b_{tj}/b_{ij} =1$.
Hence, $B[\alpha] = J_p$.

We now show that
$B[\beta]=O_{n-p}$.
Since $a_1 = \tt S$,
it follows that $n>p$,
as otherwise we would have $n=p$,
which would imply
that $B=B[\alpha] = J_p$,
whose apr-sequence is $\tt AN\overline{N}$,
which is a contradiction.
It follows that $\beta$ is nonempty.
If $t \in \beta$, then
$b_{tt} =b_{t1}b_{t2}/b_{12} = 0$.
Thus, $B[\beta]=O_{n-p}$.
Then, with $k:=n-p$, we have that
$B=J_{n-k} \oplus O	_k$.
It is easy to see that the operations performed on $B$ that
resulted in the matrix $J_{n-k} \oplus O	_k$ is accomplished by
finding an appropriate generalized permutation matrix $P$ and
a nonzero constant $c$ such that
$cP^{T}BP = J_{n-k} \oplus O	_k$.
Moreover, observe that
$1 \leq k \leq n-3 \leq n-1$.
Hence, Statement (1) holds.

\noindent
{\bf Case 2}: 
Each row of $B$ contains at most one nonzero off-diagonal entry.

\noindent
Since a simultaneous permutation of
the rows and columns of $B$ leaves $\apr(B)$ invariant,
we may assume that
$b_{12}\neq 0$  and $b_{1j} =0$ for  $j \in \{2,3, \dots, n\}$.
Since $B$ does not contain a row with
more than one nonzero off-diagonal entry, and because $B$ is symmetric,
$B=B[\{1,2\}] \oplus B[\{3,4, \dots, n\}]$.
Moreover, since multiplying a row and column of $B$ by
a nonzero constant leaves $\apr(B)$ invariant,
we may assume that $b_{12}=b_{21}=1$.
Then, as $a_2= \tt N$,
$0=\det B[\{1,j\},\{2,j\}] = b_{jj}$
for $j \in \{3,4, \dots, n\}$.
Hence,
$B[\{3,4, \dots, n\}]$ has zero diagonal.

\noindent
{\bf Subcase A}: $B[\{3,4, \dots, n\}] = O_{n-2}$.

\noindent
If $b_{11}=b_{22}=0$, then
$B=A(K_2) \oplus O_{n-2}$,
implying that Statement (3) holds.
Now, suppose that
$b_{11} \neq 0$ or $b_{22} \neq 0$.
We may assume that $b_{11} \neq 0$.
Then by multiplying $B$ by $\frac{1}{b_{11}}$, and
then multiplying row 2 and column 2 of $B$ by $b_{11}$,
we obtain the matrix $L_2(a) \oplus O_{n-2}$ for some $a$.
Without loss of generality, we may assume that
$B=L_2(a) \oplus O_{n-2}$.
It is easy to verify that the operations performed on $B$ that
led to the matrix $L_2(a) \oplus O_{n-2}$ is accomplished by
finding an appropriate generalized permutation matrix $P$ and
a nonzero constant $c$ such that
$cP^{T}BP = L_2(a) \oplus O_{n-2}$.
Hence, Statement (2) holds.

\noindent
{\bf Subcase B}: $B[\{3,4, \dots, n\}] \neq O_{n-2}$.

\noindent
Since $B[\{3,4, \dots, n\}]$ has zero diagonal,
$B[\{3,4, \dots, n\}]$ must have a nonzero off-diagonal entry.
Without loss of generality,
we may assume that $b_{34} \neq 0$ and
$b_{3j} = 0$ for $j \in \{4,5, \dots n\}$.
Then, as $a_2 = \tt N$,
$0=\det B[\{1,3\},\{1,4\}]=b_{11}b_{34}$
and  $0=\det B[\{2,3\},\{2,4\}]=b_{22}b_{34}$.
Since $b_{34} \neq 0$, $b_{11} =b_{22}= 0$.
Hence, $B=A(K_2) \oplus B[\{3,4, \dots, n\}]$.
Since every almost-principal minor of
$B[\{3,4, \dots, n\}]$ is an almost-principal minor of $B$,
$\apr(B[\{3,4, \dots, n\}])$ begins with $\tt SN$.
By our assumption in the present case (Case 2),
$B[\{3,4, \dots, n\}]$ does not contain a row with
more than one nonzero off-diagonal entry.
Thus, we can apply our findings in
Subcase A and Subcase B of the present case (Case 2) to
the matrix $B[\{3,4, \dots, n\}]$:
Since $B[\{3,4, \dots, n\}]$ has zero diagonal,
we conclude that we may assume that either
$B[\{3,4, \dots, n\}] = A(K_2) \oplus B[\{5,6, \dots, n\}]$
(if $n \geq 5$) or
$B[\{3,4, \dots, n\}] = A(K_2)$ (if $n=4$).
Hence, we must have
$B=A(K_2) \oplus A(K_2)$ if $n=4$, and
$B = A(K_2) \oplus A(K_2) \oplus B[\{5,6, \dots, n\}]$ if $n \geq 5$.
It is not hard to see now that
continuing this process will allow us to assume,
without loss of generality, that
$B=A(K_2) \oplus A(K_2) \oplus \cdots \oplus A(K_2) \oplus O	_k$
for some integer $k$ with $0 \leq k \leq n-2$
(where the parity of $k$ is the same as that of $n$);
hence, Statement (3) holds.
\epf

We now turn our attention to sequences that begin with $\tt SS$.

\begin{prop}\label{SS...NS}
A sequence of the form
${\tt SS}a_3a_4 \cdots a_{n-3}{\tt NS}$
is not the apr-sequence of a symmetric matrix over $\F$.
\end{prop}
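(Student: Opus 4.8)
The plan is to argue by contradiction: suppose $B \in \Fnn$ is symmetric with $\apr(B) = {\tt SS}a_3a_4\cdots a_{n-3}{\tt NS}$. Write $\apr(B) = a_1a_2\cdots a_{n-1}$, so $a_1 = a_2 = {\tt S}$, $a_{n-2} = {\tt N}$ and $a_{n-1} = {\tt S}$. The key structural observation is that $a_{n-1} = {\tt S}$ forces a large nonsingular almost-principal submatrix: there exist $\alpha$ with $|\alpha| = n-2$ and indices $i \neq j$ with $\{i,j\} = \{1,\dots,n\}\setminus\alpha$ such that $B[\alpha\cup\{i\},\alpha\cup\{j\}]$ is an $(n-1)\times(n-1)$ nonsingular submatrix; in particular $\rank(B) \geq n-1$. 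First I would split into the cases $\rank(B) = n$ and $\rank(B) = n-1$.

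If $B$ is nonsingular, then by the Inverse Theorem, $\apr(B^{-1}) = a_{n-1}a_{n-2}\cdots a_1 = {\tt SN}a_{n-3}\cdots a_3 {\tt SS}$, so $B^{-1}$ is a symmetric matrix whose apr-sequence begins with ${\tt SN}$. Then Proposition \ref{SN} applies to $B^{-1}$: up to a generalized permutation and a nonzero scalar, $B^{-1}$ is one of $J_{n-k}\oplus O_k$, $L_2(a)\oplus O_{n-2}$, or a direct sum of copies of $A(K_2)$ possibly with a zero block. But the first two matrices are singular (for $n \geq 3$), contradicting that $B^{-1}$ is nonsingular; and the third is nonsingular only when $k = 0$ and there are $n/2$ copies of $A(K_2)$, in which case Lemma \ref{SN lemma}(3) gives $\apr(B^{-1}) = {\tt S}\overline{\tt NS}$, which ends in ${\tt S}$ (its last letter $a_{n-1}$ of $B^{-1}$, which is $a_1 = {\tt S}$ of $B$ — consistent) but whose \emph{second} letter is ${\tt N}$ only if $n \geq 4$, and then the pattern ${\tt SNSNS}\cdots$ has ${\tt N}$ exactly in even positions; one checks this is incompatible with also having $a_{n-2}(B^{-1}) = a_2(B) = {\tt S}$ unless the positions collide, and a short parity count rules it out. (Concretely: $\apr(B^{-1})$ must simultaneously begin ${\tt SN}$ and have ${\tt S}$ in position $n-2$ and ${\tt SS}$ at the end — impossible for the alternating pattern.) So the nonsingular case is dead.

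If $\rank(B) = n-1$, then $B$ has a zero row or not. If $B$ has no zero row, I would invoke the result announced in the abstract and proved in Section \ref{s: ap-rank} — that a symmetric, non-diagonal, singular matrix with no zero row satisfies $\rank(B) = \aprank(B)$, hence $\aprank(B) = n-1$, consistent with $a_{n-1} = {\tt S}$ but giving no immediate contradiction; instead I would pass to a Schur complement. Since $a_2 = {\tt S} \neq {\tt N}$ and (via the ${\tt NN}$ Theorem, as $a_{n-2} = {\tt N}$, $a_{n-1} = {\tt S}$ we cannot have $a_{n-3} = {\tt N}$ too) the tail ${\tt NS}$ is "isolated," the cleanest route is: take a nonsingular principal submatrix $B[\gamma]$ of small size $k$ with $k$ chosen so that the Schur complement $C = B/B[\gamma]$ has its apr-sequence beginning ${\tt SN}\cdots$ — using the Schur Complement Corollary, $[\apr(C)]_j = a_{j+k}$ whenever $a_{j+k} \in \{{\tt A},{\tt N}\}$, so I want $a_{1+k} = {\tt N}$, i.e. a nonsingular principal submatrix of size $k = n-3$. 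That exists iff $\rank(B) \geq n-3$, which holds. If it is principal we get $\apr(C)$ of length $2$ starting with some letter then ${\tt N}$ — too short to be useful directly. The hard part — and the main obstacle — is handling the singular case without an inverse: I expect to need a more delicate argument combining Proposition \ref{SN} applied to a well-chosen $(n-1)\times(n-1)$ \emph{principal} submatrix of rank $n-1$ (which is nonsingular, so the previous paragraph's contradiction applies to it via the Inheritance Theorem pushing ${\tt S}$ in position $n-2$ and ${\tt N}$ somewhere down into it), together with the ${\tt NN}$ Theorem and Theorem \ref{NA}/\ref{...A...} to propagate the forced structure. I would organize the singular case as: (a) reduce to $B$ having a nonsingular $(n-1)\times(n-1)$ principal submatrix $B[\delta]$; (b) apply the Inverse Theorem to $B[\delta]$ and Proposition \ref{SN} to its inverse exactly as in the nonsingular case, using the Inheritance Theorem to transfer the ${\tt SS}$ beginning and the ${\tt N}$ near the end of $\apr(B)$ down to $\apr(B[\delta])$; (c) derive the same parity/alternation contradiction. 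The one genuinely fiddly point will be confirming that $\apr(B[\delta])$ inherits enough of $\apr(B)$ — the Inheritance Theorem gives ${\tt N}$'s and ${\tt A}$'s cleanly but ${\tt S}$'s only at the level of \emph{some} submatrix of size $\geq k+5$, so I may instead need to argue directly that $B[\delta]$ cannot be diagonal (else $B$ would be, contradicting $a_1 = {\tt S}$) and that its apr-sequence begins ${\tt SN}$ or ${\tt SS}$, then recurse/descend on $n$.
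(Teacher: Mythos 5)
Your treatment of the nonsingular case is essentially the paper's first step: if $B$ were nonsingular, the Inverse Theorem would make $\apr(B^{-1})$ a sequence beginning ${\tt SN}$ and ending ${\tt SS}$, which is impossible by Proposition \ref{SN} (all three forms it permits never contain two consecutive ${\tt S}$s), so $B$ is singular; and since the sequence ends in ${\tt S}$, there is a nonsingular $(n-1)\times(n-1)$ almost-principal submatrix, giving $\rank(B)=n-1$ and hence (rank being principal) a nonsingular $(n-1)\times(n-1)$ principal submatrix $B'$. Up to this point you and the paper agree.

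The gap is in the singular case, which is the heart of the proposition, and your sketch there rests on a false claim. The Inheritance Theorem transfers only ${\tt N}$s and ${\tt A}$s to \emph{every} principal submatrix; here it gives $[\apr(B')]_{n-2}={\tt N}$, i.e.\ every order-$(n-2)$ almost-principal minor of the nonsingular matrix $B'$ vanishes, so $(B')^{-1}$ (whose off-diagonal entries are, up to sign and $\det B'$, exactly those minors) is diagonal, and therefore $B'$ \emph{is} a diagonal matrix with nonzero diagonal. Thus your fallback ``argue that $B[\delta]$ cannot be diagonal, else $B$ would be diagonal'' fails twice: $B'$ must be diagonal, and its diagonality does not force $B$ to be diagonal, because the deleted row and column of $B$ can (and, since $a_1={\tt S}$, must) carry a nonzero off-diagonal entry. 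Likewise, rerunning the Inverse Theorem/Proposition \ref{SN} argument on $(B')^{-1}$ produces no contradiction, since $\apr(B')={\tt N}\cdots{\tt N}$ is perfectly attainable; the ${\tt SS}$ prefix of $\apr(B)$ simply does not descend to $B'$. The missing idea is to return to $B$ itself: with $B'=B[\{1,\dots,n-1\}]$ diagonal and some $b_{1n}\neq 0$ (after permutation), the order-$(n-2)$ almost-principal minor
\[
\det B[\{1,\dots,n-2\},\{2,\dots,n-2\}\cup\{n\}]=(-1)^{n-1}\,b_{1n}\prod_{j=2}^{n-2}b_{jj}\neq 0,
\]
contradicting $a_{n-2}={\tt N}$. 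Without this (or an equivalent) concluding construction, your singular case is not a proof, and your proposed recursion/descent has no clear termination or contradiction to land on.
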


\bpf
Suppose to the contrary that there exists
a symmetric matrix $B \in \Fnn$ such that
$\apr(B) = {\tt SS}a_3a_4 \cdots a_{n-3}{\tt NS}$.
Observe that $B$ is singular
(otherwise, the Inverse Theorem would imply that $B^{-1}$ has
apr-sequence ${\tt SN} a_{n-3} \cdots a_4a_3 \tt SS$, 
which would contradict Proposition \ref{SN}).
Hence, $\rank(B) \leq n-1$.
Since $\apr(B)$ ends with $\tt S$,
$B$ contains a nonsingular $(n-1) \times (n-1)$
(almost-principal) submatrix,
implying that $\rank(B) = n-1$.
Since the rank of $B$ is principal,
$B$ contains a nonsingular $(n-1) \times (n-1)$
principal submatrix, say, $B'$.
Without loss of generality, we may assume that
$B'=B[\{1,2, \dots, n-1\}]$.
By the Inheritance Theorem,
$\apr(B')$ ends with $\tt N$.
Then, as $B'$ is nonsingular,
$(B')^{-1}$ is a diagonal matrix,
implying that $B'$ is also a diagonal matrix.
Then, as $B'$ is nonsingular,
$b_{jj} \neq 0$ for all $j \in \{1,2, \dots, n-1\}$.
Since $\apr(B)$ begins with $\tt S$,
the last row (and last column) of $B$ must
contain a nonzero off-diagonal entry.
Without loss of generality,
we may assume that $b_{1n} \neq 0$.
Now, note that
\[
\det(B[\{1,2, \dots, n-2\},\{1,2, \dots, n\} \setminus \{1,n-1\}]) =
(-1)^{n-1}b_{1n} \prod_{j=2}^{n-2}b_{jj} \neq 0.
\]
Hence, $B$ contains an $(n-2) \times (n-2)$
nonsingular almost-principal submatrix,
which contradicts the fact that
$\apr(B) = {\tt SS}a_3a_4 \cdots a_{n-3}{\tt NS}$.
\epf

\begin{lem}\label{NS lemma}
Let $B \in \Fnn$ be symmetric, and
let $k$ be an even integer.
Suppose that $\apr(B) = a_1 a_2 \cdots a_{n-1}$ and
$a_{k}a_{k+1} = \tt NS$.
Let $B'$ be a $(k+2) \times (k+2)$ principal submatrix of $B$.
If $B' = A(K_2) \oplus A(K_2) \oplus \cdots \oplus A(K_2)$,
then $a_1 a_2 = \tt SN$.
\end{lem}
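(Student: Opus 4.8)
The plan is to normalize the large block, reduce the statement to ``$a_1=\tt S$, and $a_2=\tt N$ when $k\ge4$'', and then, assuming for contradiction that $a_2=\tt S$, manufacture a nonzero almost‑principal minor of $B$ of order $k$, contradicting $a_k=\tt N$.

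Setup and the easy part. Since permuting rows and columns simultaneously does not change $\apr(B)$, I would assume $B=[b_{ij}]$ with $B[\{1,2,\dots,k+2\}]=A(K_2)\oplus\cdots\oplus A(K_2)$ ($(k+2)/2$ summands), so that $b_{2i-1,2i}=b_{2i,2i-1}=1$ for $1\le i\le(k+2)/2$ and every other entry of $B[\{1,\dots,k+2\}]$ — including every diagonal entry — is $0$; call the pairs $\{1,2\},\{3,4\},\dots,\{k+1,k+2\}$ the \emph{blocks}. Because $a_ka_{k+1}=\tt NS$, the sequence $\apr(B)$ has $\tt NS$ as a subsequence, so Theorem~\ref{...A...} shows $\apr(B)$ contains no $\tt A$; in particular $a_1,a_2\in\{\tt N,\tt S\}$. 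Since $B$ has a principal submatrix equal to $A(K_2)^{\oplus(k+2)/2}$, it is not diagonal, so $a_1\ne\tt N$ by Observation~\ref{N...}; hence $a_1=\tt S$. As $k$ is a positive even integer, $k\ge2$, and when $k=2$ the hypothesis already gives $a_2=\tt N$; so from here on I would take $k\ge4$ and assume, for contradiction, that $a_2=\tt S$ — which forces $n>k+2$, since otherwise $B=A(K_2)^{\oplus(k+2)/2}$ has $a_2=\tt N$.

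Block‑diagonalizing $B$. The first step is to show $B=A(K_2)^{\oplus(k+2)/2}\oplus D$ with $D:=B[\{k+3,\dots,n\}]$, i.e., $b_{j\ell}=0$ whenever $j\le k+2<\ell\le n$. Fix such $j,\ell$; let $j'$ be the partner of $j$ in its block, and let $\delta$ be the union of $(k-2)/2$ of the $k/2$ blocks not containing $j$, together with $\{j\}$, so $|\delta|=k-1$ and $j',\ell\notin\delta$. Then $B[\delta\cup\{j'\},\delta\cup\{\ell\}]$ is an almost‑principal submatrix of order $k$, so its determinant is $0$ because $a_k=\tt N$. Computing that determinant by expanding along the column indexed by $j$ (whose unique nonzero entry is $b_{j'j}=1$, since every other row index of the submatrix lies in $\delta$ and hence either equals $j$ or sits in a block different from $j$'s) reduces it to $\pm$ the determinant of a block upper‑triangular matrix with diagonal blocks $A(K_2)^{\oplus(k-2)/2}$ and $[\,b_{j\ell}\,]$; hence the determinant equals $\pm b_{j\ell}$, and therefore $b_{j\ell}=0$. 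Next, the same padding idea applied inside $D$: for distinct $t,i,j\in\{k+3,\dots,n\}$, adjoining $(k-2)/2$ full blocks of $\{1,\dots,k+2\}$ to $D[\{t,i\},\{t,j\}]$ gives an order‑$k$ almost‑principal submatrix of $B$ which, by block‑diagonality, has determinant $\pm\det D[\{t,i\},\{t,j\}]$; since $a_k=\tt N$, every $2\times2$ almost‑principal minor of $D$ is $0$. Now $a_2=\tt S$ yields a nonzero $2\times2$ almost‑principal minor $\det B[\{t,i\},\{t,j\}]=b_{tt}b_{ij}-b_{ti}b_{tj}$ of $B$; a short case check on the block memberships of $t,i,j$ — using block‑diagonality, the vanishing diagonal of $B[\{1,\dots,k+2\}]$, and the previous sentence — shows this can be nonzero only when $t\in\{k+3,\dots,n\}$ and $\{i,j\}$ is a block, in which case the minor equals $b_{tt}=:d_{tt}$. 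So $d_{t_0t_0}\ne0$ for some $t_0>k+2$. Finally, since $k-1$ is odd and $1\le k-1\le(k+2)-1$, Lemma~\ref{SN lemma}(3) provides a nonzero almost‑principal minor $\det B[\mu,\nu]\ne0$ of $A(K_2)^{\oplus(k+2)/2}$ of order $k-1$ (so $\mu,\nu\subseteq\{1,\dots,k+2\}$ with $|\mu|=|\nu|=k-1$ and $|\mu\cap\nu|=k-2$); then $B[\mu\cup\{t_0\},\nu\cup\{t_0\}]$ is an order‑$k$ almost‑principal submatrix of $B$ whose determinant equals $\det B[\mu,\nu]\cdot d_{t_0t_0}\ne0$ by block‑diagonality, contradicting $a_k=\tt N$. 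Hence $a_2=\tt N$, so $a_1a_2=\tt SN$.

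The step I expect to be the crux is the block‑diagonalization: one must select exactly the right order‑$k$ almost‑principal submatrices (a handful of $A(K_2)$‑blocks together with a single \emph{hinge} index $j$ and its partner $j'$) and check that the cofactor expansion along column $j$ collapses everything to $\pm b_{j\ell}$; the only delicate point is bookkeeping which rows and columns lie in which block. The remaining determinant computations are minor variations on this one.
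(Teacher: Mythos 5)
Your proof is correct, and its first half follows the same strategy as the paper's: normalize $B'=B[\{1,\dots,k+2\}]$, dispose of $k=2$ and $n=k+2$, and prove $B=B'\oplus B[\{k+3,\dots,n\}]$ by padding a ``hinge'' index with $(k-2)/2$ of the $A(K_2)$-blocks and expanding the resulting order-$k$ almost-principal minors, all of which vanish because $a_k={\tt N}$. (The paper organizes this bookkeeping as a separate zero-diagonal claim plus two parity cases, but the determinant computations are the same in spirit; your choice to pad with exactly $(k-2)/2$ blocks keeps every invoked submatrix at order $k$, where $a_k={\tt N}$ applies verbatim, whereas the paper's later check that the $2\times 2$ almost-principal minors of the tail vanish adjoins all of $\{1,\dots,k+2\}$ and so produces an order-$(k+4)$ submatrix whose singularity really rests on the nonsingularity of $B'$ together with the vanishing of the corresponding order-$k$ minor, not on $a_k={\tt N}$ alone -- your bookkeeping is tighter there.) The second half genuinely diverges. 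The paper proves $a_2={\tt N}$ by showing that the tail, if nonzero, has apr-sequence beginning with ${\tt SN}$ -- which requires ruling out a leading ${\tt A}$ via the $J-I$ configuration -- and then invoking Proposition \ref{SN} to identify all of $B$, up to generalized permutation similarity, with $A(K_2)\oplus\cdots\oplus A(K_2)\oplus O_t$, finishing with Lemma \ref{SN lemma}. You instead verify directly that every $2\times 2$ almost-principal minor of $B$ vanishes: block-diagonality, the zero diagonal of $B'$, and the vanishing of the tail's $2\times 2$ almost-principal minors reduce the question to minors of the form $b_{tt}b_{ij}$ with $t>k+2$ and $\{i,j\}$ a block, and a nonzero $b_{tt}$ there is impossible because bordering a nonsingular order-$(k-1)$ almost-principal submatrix of $B'$ (which exists by Lemma \ref{SN lemma}(3), $k-1$ being odd) with the entry $b_{tt}$ yields a nonzero order-$k$ almost-principal minor, contradicting $a_k={\tt N}$. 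This buys a shorter, more elementary conclusion that never needs Proposition \ref{SN}, the classification of the tail, or the $J-I$ case analysis; the paper's route, in exchange, delivers the stronger structural fact that $B$ itself is permutation-equivalent to $A(K_2)\oplus\cdots\oplus A(K_2)\oplus O_t$, which is more than the lemma requires.
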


\bpf
Suppose that
$B' = A(K_2) \oplus A(K_2) \oplus \cdots \oplus A(K_2)$.
Thus, $a_1 = \tt S$.
If $k=2$, then there is nothing to prove, and
if $n=k+2$, then the desired conclusion follows from
Lemma \ref{SN lemma};
thus, we assume that
$k \geq 4$, and that
$n \geq k+3$.
Without loss of generality, we may assume that
$B' = B[\{1,2, \dots, k+2\}]$.
We now show that
$B[\{k+3,k+4, \dots,n\}]$ has zero diagonal,
and that
\[
B = B' \oplus B[\{k+3,k+4, \dots,n\}].
\]
Suppose that $B=[b_{ij}]$.
To see that $B[\{k+3,k+4, \dots,n\}]$ has zero diagonal,
let $j \in \{k+3, k+4, \dots,n\}$ and
$\alpha =\{1,2,\dots, k\} \cup\{j\}$.
Then, as $a_k=\tt N$,
\[
0=
\det B[\alpha \setminus \{1\}, \alpha  \setminus \{2\}]=
(-1)^{\frac{k-2}{2}}b_{jj}\det(M),
\]
where $M$ is  the $(k-1) \times (k-1)$ matrix
\[
M=J_1 \oplus A(K_2) \oplus A(K_2) \oplus \cdots \oplus A(K_2).
\]
Then, as $\det(M) \neq 0$,
$b_{jj} = 0$.
It follows that
$B[\{k+3,k+4, \dots,n\}]$ has zero diagonal, as desired.

Now, to show that
\[
B = B' \oplus B[\{k+3,k+4, \dots,n\}],
\]
let
$p,q \in \{1,2, \dots, k+2\}$ and
$j \in \{k+3, k+4, \dots,n\}$,
where $p$ is odd and $q$ is even.
We now show that
$b_{pj}=0$ and $b_{qj}=0$.

\noindent
{\bf Case 1}: $p \leq k-2$ and $q \leq k-2$.

\noindent
Let
$\alpha =\{1,2,\dots, k\}$.
It is easy to see that
\[
\det B[\alpha, (\alpha  \setminus \{p+1\}) \cup \{j\}]=
(-1)^{p+k} b_{pj}\det(F)
\]
and
\[
\det B[\alpha, (\alpha  \setminus \{q-1\}) \cup \{j\}]=
(-1)^{q+k} b_{qj}\det(G),
\]
for some matrices
$F$ and $G$ that are permutationally similar to the
$(k-1) \times (k-1)$ matrix
\[
J_1 \oplus A(K_2) \oplus A(K_2) \oplus \cdots \oplus A(K_2).
\]
Since $a_k = \tt N$,
$\det B[\alpha, (\alpha  \setminus \{p+1\}) \cup \{j\}]=0$ and
$\det B[\alpha, (\alpha  \setminus \{q-1\}) \cup \{j\}]=0$.
Then, as $F$ and $G$ are nonsingular,
$b_{pj}=0$ and $b_{qj}=0$, as desired.

\noindent
{\bf Case 2}: $p > k-2$ and $q > k-2$.

\noindent
Let
$\beta =\{1,2,\dots, k-2\}$.
It is easy to see that
\[
\det B[\beta \cup \{p, p+1\}, \beta \cup \{p,j\}]=
-b_{pj}\det(H)
\]
and
\[
\det B[\beta \cup \{q-1,q\}, \beta \cup \{q,j\}]=
b_{qj}\det(H),
\]
where $H$ is the $(k-1) \times (k-1)$ matrix
\[
H=A(K_2) \oplus A(K_2) \oplus \cdots \oplus A(K_2) \oplus J_1.
\]
Since $a_k = \tt N$,
$\det B[\beta \cup \{p, p+1\}, \beta \cup \{p,j\}] =0$ and
$\det B[\beta \cup \{q-1,q\}, \beta \cup \{q,j\}]=0$.
Then, as $H$ is nonsingular,
$b_{pj}=0$ and $b_{qj}=0$, as desired.

It follows from Case 1 and Case 2 that
\[
B = B' \oplus B[\{k+3,k+4, \dots,n\}],
\]
as desired.

To conclude the proof, we show that $a_2 = \tt N$.
If $B[\{k+3,k+4, \dots,n\}]$ is the zero matrix,
then the desired conclusion follows from Lemma \ref{SN lemma}.
Thus, assume that  $B[\{k+3,k+4, \dots,n\}]$ is not the zero matrix.
Since $B' = A(K_2) \oplus A(K_2) \oplus \cdots \oplus A(K_2)$,
it suffices to show that
there exists a generalized permutation matrix $P$ such that
\[
P^TB[\{k+3,k+4, \dots,n\}]P=
A(K_2) \oplus A(K_2) \oplus \cdots \oplus A(K_2) \oplus O	_t
\]
for some integer $t$ with $t \geq 0$,
since that would imply that
there exists a generalized permutation matrix $Q$ such that
\[
Q^TBQ=
A(K_2) \oplus A(K_2) \oplus \cdots \oplus A(K_2) \oplus O	_t,
\]
and, therefore,
that $a_2 = \tt N$ (see Lemma \ref{SN lemma}).
Since $B[\{k+3,k+4, \dots,n\}]$ is a
nonzero matrix with zero diagonal,
$n \geq k+4$.
If $n=k+4$,
then the fact that
$B[\{k+3,k+4, \dots,n\}]$ has zero diagonal immediately implies that
there exists a generalized permutation matrix $P$ such that
$P^TB[\{k+3,k+4, \dots,n\}]P = A(K_2)$, as desired.
Thus, we assume that $n \geq k+5$
(implying that the order of $B[\{k+3,k+4, \dots,n\}]$ is
greater than or equal to $3$).
Since $B[\{k+3,k+4, \dots,n\}]$ has zero diagonal,
Proposition \ref{SN} implies that it suffices to show that
$\apr(B[\{k+3,k+4, \dots,n\}])$ begins with $\tt SN$.
We start by showing that
$\apr(B[\{k+3,k+4, \dots,n\}])$ begins with $\tt S$.
Since $B[\{k+3,k+4, \dots,n\}]$ is a nonzero matrix with zero diagonal,
$\apr(B[\{k+3,k+4, \dots,n\}])$ does not begin with $\tt N$.
Suppose to the contrary that
$\apr(B[\{k+3,k+4, \dots,n\}])$ begins with $\tt A$.
Since $B[\{k+3,k+4, \dots,n\}]$ has zero diagonal,
$B[\{k+3,k+4, \dots,n\}] = J_{n-k-2} - I_{n-k-2}$.
Let $\theta = \{1,2, \dots, k-2\}$.
Note that
$B[\theta] = A(K_2) \oplus A(K_2) \oplus \cdots \oplus A(K_2)$,
and that
\[
B[\theta \cup \{k+3,k+4\}, \theta \cup \{k+3, k+5\}] =
B[\theta] \oplus B[\{k+3,k+4\}, \{k+3, k+5\}].
\]
Then, as
\[
B[\{k+3,k+4\}, \{k+3, k+5\} =
\mtx{0&1 \\
        1&1}
\]
is nonsingular,
$B[\theta] \oplus B[\{k+3,k+4\}, \{k+3, k+5\}]$ is nonsingular,
implying that
$B[\theta \cup \{k+3,k+4\}, \theta \cup \{k+3, k+5\}]$ is nonsingular,
a contradiction to the fact that $a_k = \tt N$.
Hence, it follows that
$\apr(B[\{k+3,k+4, \dots,n\}])$ begins with $\tt S$.

It now remains to show that the second letter in
$\apr(B[\{k+3,k+4, \dots,n\}])$ is $\tt N$.
Let $p,q,r \in \{k+3, k+4, \dots, n\}$ be distinct integers, and
let $\mu = \{1,2, \dots, k+2\}$.
We now show that $\det(B[\{p,q\},\{p,r\}])=0$.
Notice that $B[\mu]=B'$, and that
\[
B[\mu \cup \{p,q\}, \mu \cup\{p,r\}] =
B' \oplus B[\{p,q\},\{p,r\}].
\]
Since $a_k = \tt N$,
$B[\mu \cup \{p,q\}, \mu \cup\{p,r\}]$
is singular.
Then, as $B'$ is nonsingular,
$B[\{p,q\},\{p,r\}]$ is singular,
implying that $\det(B[\{p,q\},\{p,r\}])=0$, as desired.
Hence, every $2 \times 2$ almost-principal submatrix of
$B[\{k+3,k+4, \dots,n\}]$ is singular,
implying that
$\apr(B[\{k+3,k+4, \dots,n\}])$ begins with $\tt SN$,
as desired.
\epf

\begin{thm}\label{NS}
Let $B \in \Fnn$ be symmetric.
Suppose that $\apr(B) = a_1 a_2 \cdots a_{n-1}$ and
$a_{k}a_{k+1} = \tt NS$ for some $k$.
Then $a_1 a_2 = \tt SN$.
\end{thm}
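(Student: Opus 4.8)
The plan is to transplant the whole problem onto a single $(k+2)\times(k+2)$ principal submatrix, which ``truncates'' $\apr(B)$ immediately after the prescribed $\tt NS$. First I would note that since $a_ka_{k+1}={\tt NS}$, the sequence $\apr(B)$ contains $\tt NS$ as a subsequence, so by Theorem~\ref{...A...} it contains no $\tt A$ at all; in particular $a_1\ne{\tt A}$, and $a_1\ne{\tt N}$ as well, because $a_1={\tt N}$ would force all $a_j={\tt N}$ by Observation~\ref{N...}, contradicting $a_{k+1}={\tt S}$. Hence $a_1={\tt S}$, and it remains only to prove $a_2={\tt N}$; this holds by hypothesis when $k=2$, so I may assume $k\ge 3$.

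Using that $a_{k+1}={\tt S}$, I would fix a nonsingular almost-principal submatrix $B[\alpha\cup\{i\},\alpha\cup\{j\}]$ of order $k+1$ (so $|\alpha|=k$ and $i\ne j$ with $i,j\notin\alpha$) and put $C:=B[\alpha\cup\{i,j\}]$, a $(k+2)\times(k+2)$ principal submatrix of $B$, whose apr-sequence has length $k+1$. The claim is that $\apr(C)=c_1c_2\cdots c_{k-1}{\tt NS}$ with $c_1={\tt S}$: every order-$k$ almost-principal minor of $C$ is an order-$k$ almost-principal minor of $B$, hence $0$ since $a_k={\tt N}$, so $[\apr(C)]_k={\tt N}$; the chosen submatrix shows $[\apr(C)]_{k+1}\ne{\tt N}$; and $[\apr(C)]_{k+1}\ne{\tt A}$, for otherwise $\apr(C)$ would contain $\tt NA$, contradicting Theorem~\ref{NA}. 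Since $\apr(C)$ contains $\tt NS$ it has no $\tt A$ (Theorem~\ref{...A...} again), and $c_1\ne{\tt N}$, because $c_1={\tt N}$ would make $C$ diagonal (Observation~\ref{N...}), hence all its almost-principal minors zero, contradicting the choice of the order-$(k+1)$ submatrix; so $c_1={\tt S}$.

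Now I would split on $c_2$. If $c_2={\tt S}$, then $\apr(C)={\tt SS}c_3\cdots c_{k-1}{\tt NS}$ is the apr-sequence of the $(k+2)\times(k+2)$ symmetric matrix $C$ and has precisely the form excluded by Proposition~\ref{SS...NS}, a contradiction; as $c_2\ne{\tt A}$ too, this forces $c_2={\tt N}$, so $\apr(C)$ begins with $\tt SN$. By Proposition~\ref{SN}, $\apr(C)$ is either ${\tt SN}\overline{\tt N}$ or ${\tt S}\,\overline{\tt NS}\,\overline{\tt N}$; the first ends in $\tt N$, while $\apr(C)$ ends in its last letter $c_{k+1}={\tt S}$, so $\apr(C)={\tt S}\,\overline{\tt NS}$, and the third alternative of Proposition~\ref{SN} (with no trailing zero block, since $\apr(C)$ has no trailing $\tt N$) yields a generalized permutation matrix $P$ with $P^{T}CP=A(K_2)\oplus A(K_2)\oplus\cdots\oplus A(K_2)$. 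In particular $k+2$ is even, so $k$ is even.

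Finally I would invoke Lemma~\ref{NS lemma}. Conjugating $B$ by a nonsingular diagonal matrix multiplies every minor by a nonzero scalar, and conjugating by a permutation matrix is a relabelling, so both operations leave $\apr(B)$ unchanged; applying suitable such conjugations I may therefore assume that $C$ is the leading $(k+2)\times(k+2)$ principal submatrix of $B$ and equals $A(K_2)\oplus\cdots\oplus A(K_2)$. Then, with $k$ even and $a_ka_{k+1}={\tt NS}$, Lemma~\ref{NS lemma} gives $a_1a_2={\tt SN}$, hence $a_2={\tt N}$, finishing the proof. The step that needs the most care is the passage to $C$: one must check that $C$ is large enough (the residual case $k\ge 3$ makes $C$ have order $\ge 5$, exactly the threshold at which Propositions~\ref{SN} and~\ref{SS...NS} become applicable) and that $[\apr(C)]_k$ and $[\apr(C)]_{k+1}$ are pinned down as $\tt N$ and $\tt S$; the generalized-permutation normalization feeding $C$ into Lemma~\ref{NS lemma} is then routine.
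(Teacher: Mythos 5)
Your proof is correct and follows essentially the same route as the paper's: pass to the $(k+2)\times(k+2)$ principal submatrix $C=B[\alpha\cup\{i,j\}]$ containing a nonsingular order-$(k+1)$ almost-principal submatrix, pin down $[\apr(C)]_k[\apr(C)]_{k+1}={\tt NS}$ via Theorem~\ref{NA}, rule out $\tt SS$ at the start via Proposition~\ref{SS...NS}, invoke Proposition~\ref{SN} to get $C$ congruent to $A(K_2)\oplus\cdots\oplus A(K_2)$ with $k$ even, and finish with Lemma~\ref{NS lemma}. The only cosmetic difference is that you argue directly for $a_2={\tt N}$ while the paper argues by contradiction from $a_1a_2={\tt SS}$.
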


\bpf
Suppose to the contrary that $a_1 a_2 \neq \tt SN$.
Since $\apr(B)$ contains $\tt NS$,
Observation \ref{N...} implies that $a_1 \neq \tt N$, and
Theorem \ref{...A...} implies that $a_1a_2$ does not contain an $\tt A$.
It follows that $a_1a_2 = \tt SS$.
Since $a_{k+1} = \tt S$, $B$ contains a nonsingular
$(k+1) \times (k+1)$ almost-principal submatrix, say,
$B[\alpha \cup \{i\}, \alpha \cup \{j\}]$
(thus, $i \neq j$ and $|\alpha| = k$).
Let $B' = B[\alpha \cup \{i,j\}]$ and
$\apr(B') = a'_1a'_2 \cdots a'_{k+1}$.
Since $a_k = \tt N$,
the Inheritance Theorem implies that  $a'_k= \tt N$.
Since $B'$ contains the nonsingular
$(k+1) \times (k+1)$ almost-principal submatrix
$B[\alpha \cup \{i\}, \alpha \cup \{j\}]$ as a submatrix,
$a'_{k+1} \in \{\tt A, \tt S\}$.
Thus, $a'_{k}a'_{k+1} \in \{\tt NA, NS \}$.
By Theorem \ref{NA},  $a'_{k}a'_{k+1} = \tt NS$.
Thus, $\apr(B') = a'_1a'_2 \cdots a'_{k-1} \tt NS$.
Since $\apr(B')$ contains $\tt NS$,
Observation \ref{N...} implies that $a'_1 \neq \tt N$,
Theorem \ref{...A...} implies that $a'_1a'_2$ does not contain an $\tt A$,
and Proposition \ref{SS...NS} implies that $a'_1a'_2 \neq \tt SS$.
Thus, $a'_1a'_2 = \tt SN$, and, therefore,
$\apr(B') = {\tt SN}a'_3a'_4 \cdots a'_{k-1} \tt NS$.
It follows from Proposition \ref{SN} that
there exists a generalized permutation matrix $P$ such that
\[
P^TB'P=A(K_2) \oplus A(K_2) \oplus \cdots \oplus A(K_2),
\]
and that $k$ is even.
Without loss of generality, we may assume that
\[
B' = A(K_2) \oplus A(K_2) \oplus \cdots \oplus A(K_2).
\]
By Lemma \ref{NS lemma},
$a_1 a_2 = \tt SN$,
a contradiction to the fact that $a_1a_2 = \tt SS$.
\epf

The sequences not containing an $\tt A$ that are realized as the apr-sequence of a symmetric matrix over a field $\F$ are characterized:

\begin{thm}\label{No As}
Let $a_1 a_2 \cdots a_{n-1}$  be a sequence from
$\{\tt S, N\}$ and $\F$ be a field.
Then
$a_1 a_2 \cdots a_{n-1}$ is the apr-sequence of a
symmetric matrix $B \in \Fnn$
if and only if $a_1 a_2 \cdots a_{n-1}$  has one of the following forms:
\ben
\item
$\tt N\overline{N}$.
\item
$\tt SN\overline{N}$.
\item
$\tt SNS\overline{NS} \hspace{0.4mm} \overline{N}$.
\item
$\tt SS\overline{S} \hspace{0.4mm} \overline{N}$.
\een
\end{thm}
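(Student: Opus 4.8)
The plan is to prove the two directions separately. Sufficiency amounts to producing, for each of the four forms, a symmetric matrix realizing it, and for three of the forms such matrices are already available. Necessity is a short case analysis organized around the first two entries of the sequence.

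For sufficiency: form (1) is realized by $O_n$, whose apr-sequence is the all-${\tt N}$ sequence by Observation \ref{N...}; form (2) is realized by $J_{n-1}\oplus O_1$ and form (3) by a direct sum of sufficiently many copies of $A(K_2)$ together with a zero block, via statements (1) and (3) of Lemma \ref{SN lemma}, respectively. For form (4), write the target sequence as $r$ copies of ${\tt S}$ followed by $s$ copies of ${\tt N}$, with $r\ge 2$, $s\ge 0$ and $r+s=n-1$, and take $B=A(P_{r+1})\oplus O_s$, where $A(P_m)$ denotes the adjacency matrix of the path on $m$ vertices. By Observation \ref{append} it then suffices to show that $\apr(A(P_m))$ is the all-${\tt S}$ sequence of length $m-1$ for every $m\ge 3$, and this is a short explicit computation: for each $k$ with $1\le k\le m-1$ the almost-principal submatrix $A(P_m)[\{1,2,\dots,k\},\{2,3,\dots,k+1\}]$ is lower triangular with $1$s on its diagonal, hence nonsingular, whereas $A(P_m)[\{1,3,4,\dots,k+1\},\{1,2,\dots,k\}]$ has an all-zero first column, hence is singular (and when $k=1$ one uses instead the order-$1$ almost-principal minor $(A(P_m))_{13}=0$, which exists since $m\ge 3$). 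Thus $a_k={\tt S}$ for all $k$, as wanted.

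For necessity, let $\apr(B)=a_1a_2\cdots a_{n-1}$ be a sequence from $\{{\tt S},{\tt N}\}$. If $a_1={\tt N}$, then Observation \ref{N...} forces the sequence to be the all-${\tt N}$ sequence, which is form (1). Assume $a_1={\tt S}$; then $B$ is non-diagonal and $n\ge 3$. If $a_2={\tt N}$, apply Proposition \ref{SN}: each of its three possible conclusions yields an apr-sequence of the shape ${\tt SN}\,\overline{{\tt N}}$ or ${\tt S}\,\overline{{\tt NS}}\,\overline{{\tt N}}$, and since the sequence at hand begins with ${\tt SN}$, this is form (2) or form (3). Finally suppose $a_2={\tt S}$. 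If no ${\tt N}$ occurs, the sequence is the all-${\tt S}$ sequence, which is form (4). Otherwise let $t$ be the smallest index with $a_t={\tt N}$; then $t\ge 3$ and $a_1=\cdots=a_{t-1}={\tt S}$. I claim $a_j={\tt N}$ for all $j\ge t$: if not, choosing the smallest $p+1>t$ with $a_{p+1}\ne{\tt N}$ gives $a_pa_{p+1}={\tt NS}$, so Theorem \ref{NS} forces $a_1a_2={\tt SN}$, contradicting $a_1a_2={\tt SS}$. Hence $\apr(B)$ consists of $t-1\ge 2$ copies of ${\tt S}$ followed by copies of ${\tt N}$, which is form (4).

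The only genuinely new ingredient is the sufficiency construction for form (4); the rest of the proof is assembled from Observation \ref{N...}, Lemma \ref{SN lemma}, Proposition \ref{SN}, and Theorem \ref{NS}. I expect the only real friction to lie in verifying that construction — pinning down, for each order, an explicitly described nonsingular and an explicitly described singular almost-principal submatrix of the padded path matrix, and handling the small case $k=1$ separately, where the natural singular candidate would otherwise collapse to a principal submatrix. The necessity direction should run smoothly once the argument is split according to whether $a_1$ and $a_2$ equal ${\tt N}$.
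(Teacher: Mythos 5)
Your proposal is correct and follows essentially the same route as the paper: the necessity direction is the same case split on $a_1a_2$ using Observation \ref{N...}, Proposition \ref{SN}, and Theorem \ref{NS}, and the sufficiency direction uses Lemma \ref{SN lemma} for forms (1)--(3) and Observation \ref{append} to reduce form (4) to the all-${\tt S}$ case. The only difference is the explicit witness for that case --- you use the path adjacency matrix $A(P_m)$ (with a correct verification of a nonsingular and a singular almost-principal submatrix in each order, and the $k=1$ case handled separately), whereas the paper uses $J_2\oplus I_{n-2}$; both work equally well.
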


\bpf
Let $\sigma = a_1 a_2 \cdots a_{n-1}$.
Suppose that $\sigma$ is the apr-sequence of
some symmetric matrix $B \in \Fnn$.
If $a_1 = \tt N$, then $\sigma$ has
form (1) (see Observation \ref{N...}).
Thus, assume that $a_1 = \tt S$.
Since the apr-sequence $\tt S$ is
not attainable (by a $2 \times 2$ matrix),
$n \geq 3$.
If $a_2= \tt N$, then Proposition \ref{SN} implies that $\sigma$
must have one of the forms (2) or (3).
Finally, assume that $a_2 = \tt S$.
Then, by  Theorem \ref{NS},
$\sigma$ does not contain $\tt NS$.
It follows that $\sigma$ must have form (4).

For the other direction,
suppose that $\sigma$ has one of the forms (1)--(4).
If $\sigma = \tt N\overline{N}$,
then $\apr(O_{n}) = \sigma$.
If $\sigma$ has the form (2) or (3),
then the desired conclusion follows from
Lemma \ref{SN lemma}.
Finally, suppose that
$\sigma$ has the form (4);
thus, $n \geq 3$.
Because of Observation \ref{append},
it suffices to reach the desired conclusion in the case when
$\sigma$ does not contain an $\tt N$;
thus, we assume that
$\sigma = \tt SS \overline{S}$.
Let $B' = J_2 \oplus I_{n-2}$ and
$\apr(B') = a'_1a'_2 \dots a'_{n-1}$.
We now show that $\apr(B') = \sigma$.
It is obvious that $a'_1 = \tt S$.
Let $k \in \{2,3, \dots, n-1\}$ and
$\alpha = \{1,2, \dots, k+1\}$.
Now, observe that
$C[\alpha \setminus \{1\}, \alpha \setminus \{k+1\}]$ has
a zero row, and that
$C[\alpha \setminus \{1\}, \alpha \setminus \{2\}] = I_{k}$.
Hence, $C$ contains both a
singular and a nonsingular $k \times k$ almost-principal submatrix,
implying that
$\apr(C) = \tt SS\overline{S} = \sigma$,
as desired.
\epf

Combining Theorem \ref{No As} with Theorem \ref{...A...} leads to
a necessary condition for a sequence to be the
apr-sequence of a symmetric matrix over an arbitrary field $\F$:

\begin{thm}\label{necessary condition}
Let $\F$ be a field.
Let $n\geq 3$ and $\sigma = a_1a_2 \cdots a_{n-1}$ be
a sequence from $\{\tt A,N,S\}$.
If $\sigma$ is the apr-sequence of a symmetric matrix $B \in \Fnn$,
then one of the following statements holds:
\ben

\item
$\sigma = \tt SNS\overline{NS} \hspace{0.4mm} \overline{N}$.

\item
Neither $\tt NA$ nor $\tt NS$ is a subsequence of $\sigma$.

\een
\end{thm}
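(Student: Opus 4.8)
The plan is to obtain this as a direct corollary of Theorem~\ref{...A...} and Theorem~\ref{No As}, by splitting on whether $\sigma$ contains the letter $\tt A$.

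First, suppose $\sigma$ contains an $\tt A$, say $a_k = \tt A$ for some $k \in \{1, \dots, n-1\}$. Then Theorem~\ref{...A...} applies directly to $B$ and yields that neither $\tt NA$ nor $\tt NS$ is a subsequence of $\apr(B) = \sigma$; that is, statement~(2) holds, and there is nothing further to do in this case.

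Second, suppose $\sigma$ contains no $\tt A$. Then $\sigma$ is a sequence from $\{\tt S, N\}$ realized as the apr-sequence of the symmetric matrix $B \in \Fnn$, so Theorem~\ref{No As} forces $\sigma$ to have one of the four forms $\tt N\overline{N}$, $\tt SN\overline{N}$, $\tt SNS\overline{NS}\,\overline{N}$, or $\tt SS\overline{S}\,\overline{N}$. If $\sigma$ has the third form, then $\sigma = \tt SNS\overline{NS}\,\overline{N}$, which is precisely statement~(1). In each of the other three forms, every occurrence of $\tt N$ is followed (if by anything) only by further $\tt N$s---since these forms are, respectively, a block of $\tt N$s, an $\tt S$ followed by a block of $\tt N$s, and a block of $\tt S$s followed by a block of $\tt N$s---so neither $\tt NA$ nor $\tt NS$ can occur as a subsequence of $\sigma$, and statement~(2) holds. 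In all cases one of (1), (2) holds, which is what was to be shown.

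Since both ingredients are already available, there is no genuine obstacle here: the argument is a short case analysis, and the only thing to verify is the elementary observation that forms (1), (2), and (4) of Theorem~\ref{No As} contain neither $\tt NA$ nor $\tt NS$ as a subsequence, which is immediate from their explicit shapes. (One should also note, for form (3), that it contains no $\tt A$ at all, so $\tt NA$ cannot occur in it either; the only ``forbidden'' subsequence it does contain is $\tt NS$, which is exactly why this form must be singled out as the separate case~(1).)
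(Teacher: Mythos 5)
Your proof is correct and is essentially the paper's own argument: the paper presents Theorem~\ref{necessary condition} as an immediate consequence of ``combining Theorem~\ref{No As} with Theorem~\ref{...A...},'' which is exactly your case split on whether $\sigma$ contains an $\tt A$. The only content beyond citing those two results is the elementary check that forms (1), (2), and (4) of Theorem~\ref{No As} contain neither $\tt NA$ nor $\tt NS$, which you carry out correctly.
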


If $\F$ is the field of order $2$, then 
the converse of Theorem \ref{necessary condition} does not hold: 
An exhaustive inspection reveals that
the only sequences starting with $\tt A$ that
are realized as the apr-sequence of a
$4 \times 4$ symmetric matrix over the field of order $2$ are
$\tt AAA$,
$\tt ASS$,
$\tt ASN$ and
$\tt ANN$
(since a simultaneous permutation of
the rows and columns of a matrix leaves its apr-sequence invariant,
this inspection is reduced to checking a total of five matrices);
as this list does not include the sequences
$\tt AAN$,
$\tt AAS$ and
$\tt ASA$,
the converse of Theorem \ref{necessary condition} does not hold if 
$\F$ is the field of order $2$.

%
%

For fields of characteristic $0$, 
we suspect that the converse of Theorem \ref{necessary condition} holds.
Since Lemma  \ref{SN lemma} implies that  
$\sigma = \tt SNS\overline{NS} \hspace{0.4mm} \overline{N}$ is realized as the apr-sequence of a symmetric matrix (over any field),
the converse of Theorem \ref{necessary condition} holds if the following statement holds:
If $\F$ is a field  and $\sigma$ is a sequence from 
$\{\tt A,N,S\}$ of length $n-1$, 
with $n\geq 3$, that contains neither 
$\tt NA$ nor $\tt NS$ as a subsequence, 
then $\sigma$ is the apr-sequence of a symmetric matrix in $\Fnn$;
if $\F$ is a field of characteristic $0$,
this statement is reminiscent of, and closely related to, 
Theorem \ref{qpr-char 0},
which is partly why we speculate that 
the converse of Theorem \ref{necessary condition} may hold 
if $\F$ is a field of characteristic $0$.
To establish this statement for fields of characteristic $0$, 
it would be natural to resort to probabilistic techniques akin to 
those used in both \cite{EPR} and \cite{qpr}.
In some instances, these techniques consist of taking an 
$(n-1) \times (n-1)$ symmetric matrix $B$ with 
$\apr(B)=a_1a_2 \cdots a_{n-2}$ and
verifying the existence of a bordering strategy to produce an 
$n \times n$ symmetric matrix $B'$ such that
$\apr(B'):=
a'_1a'_2\cdots a'_{n-1} =
a_1a_2 \cdots a_{n-2}a'_{n-1}$,
where $a'_{n-1}$ is prescribed.
Roughly speaking, the existence of such a $B'$ depends upon sufficient available choice of possible vectors in $\F^{n-1}$.
If $B$ is an \textit{arbitrary} $(n-1) \times (n-1)$ symmetric matrix with 
$\epr(B)  = \tt AA \cdots A$ 
(i.e., the sequence each of whose terms is equal to $\tt A$),
then applying the aforementioned probabilistic techniques to $B$ yield an 
$n \times n$ (symmetric) matrix $B'$ with 
$\epr(B')=\tt AA \cdots AN$ 
(see \cite[Proposition 4.1]{EPR}). 
Similarly, if $C$ is an \textit{arbitrary} 
$(n-1) \times (n-1)$ symmetric matrix with 
$\qpr(C)  = \tt AA \cdots A$,
then applying the aforementioned probabilistic techniques to $C$ yield an 
$n \times n$ (symmetric) matrix $C'$ with 
$\qpr(C')=\tt AA \cdots AN$ 
(see \cite[Lemma 3.1]{qpr}).
The next example shows that 
something similar does not hold for apr-sequences, i.e., that
if $B$ is an \textit{arbitrary} 
$(n-1) \times (n-1)$ symmetric matrix with 
$\apr(B)  = \tt AA \cdots A$,
then applying the aforementioned probabilistic techniques to 
$B$ need not yield an $n \times n$ (symmetric) matrix $B'$ with 
$\apr(B')=\tt AA \cdots AN$.

\begin{ex}\label{Bordering Example 1}\normalfont
Let
\[
B=
\mtx{
-1 & 1 &  1\\
1 & -1 &  1\\
1 &  1  & -1} \in \R^{3\times3}
\mbox{\quad and \quad}
B'=
\mtx{
     B     & \vec{y} \\
\vec{y} &    t} \in \R^{4\times4},
\]
where $\vec{y}$ and $t$ are arbitrary.
Suppose that $\vec{y}=[y_1, y_2, y_3]^T$.
Observe that $\apr(B)=\tt AA$.
We now show that there is no $\vec{y} \in \R^3$ such that
$\apr(B')=\tt AAN$.
Observe that
$\det(B'[\{1,2,3\},\{2,3,4\}])=
2(y_2+y_3)=
-2\det(B'[\{2,3\},\{2,4\}])$.
It follows that if all of the 
order-$3$ almost-principal minors of $B'$ are zero, then 
some order-$2$ almost-principal minor of $B'$ is zero.
Thus,  there is no $\vec{y} \in \R^3$ such that
$\apr(B')=\tt AAN$. 
\end{ex}

The next example shows that if 
$B$ is an \textit{arbitrary} $(n-1) \times (n-1)$ symmetric matrix with 
$\apr(B)  = \tt SS \cdots S$ 
(i.e., the sequence each of whose terms is equal to $\tt S$),
then applying the aforementioned probabilistic techniques to 
$B$ need not yield an $n \times n$ (symmetric) matrix $B'$ with 
$\apr(B')=\tt SS \cdots SA$.

\begin{ex}\label{Bordering Example 2}\normalfont
Let
\[
B=
\mtx{
1 & 1 &  0 & 0\\
1 & 1 &  1 & 1\\
0 & 1 &  1 & 1\\
0 & 1 &  1 & 1} \in \F^{4\times4}
\mbox{\quad and \quad}
B'=
\mtx{
     B     & \vec{y} \\
\vec{y} &    t} \in \F^{5 \times 5},
\]
where $\F$ is an arbitrary field and $\vec{y}$ and $t$ are arbitrary.
Observe that $\apr(B)=\tt SSS$. 
We now show that there is no $\vec{y} \in \F^4$ such that
$\apr(B')=\tt SSSA$.
This is readily seen by noting that
$\det(B'[\{1,2,3,4\},\{2,3,4,5\}])=0$ for all $\vec{y} \in \F^{4}$
(two of the columns of $B'[\{1,2,3,4\},\{2,3,4,5\}]$ are the same). 
\end{ex}

\section{The ap-rank of a symmetric matrix}\label{s: ap-rank}
$\null$
\indent
This section is devoted to studying the ap-rank of a
symmetric matrix over an arbitrary field $\F$.
We start with basic observations.

\begin{obs}
Let $n \geq 2$ and $B \in \Fnn$ be symmetric.
Then $\aprank(B)$ is equal to
the index of the last $\tt A$ or $\tt S$ in $\apr(B)$.
\end{obs}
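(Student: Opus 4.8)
The plan is to unwind the definition of the apr-sequence and compare it directly to the definition of the ap-rank. Both quantities are extracted from the collection of all almost-principal minors of $B$, so the statement is essentially a bookkeeping fact about where nonzero almost-principal minors can occur.

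First I would set $\apr(B) = a_1 a_2 \cdots a_{n-1}$ and let $\ell$ be the index of the last $\tt A$ or $\tt S$ in this sequence (with $\ell = 0$ if every $a_k = \tt N$, which by Observation \ref{N...} happens exactly when $B$ is diagonal, in which case $\aprank(B) = 0$ by definition and we are done). For the general case, observe that $a_\ell \in \{\tt A, \tt S\}$ means, by the definition of the apr-sequence, that at least one almost-principal minor of order $\ell$ is nonzero; hence there exist $\alpha, \beta$ with $|\alpha| = |\beta| = \ell$ and $|\alpha \cap \beta| = \ell - 1$ such that $\det B[\alpha,\beta] \neq 0$. This immediately gives $\aprank(B) \geq \ell$ from the definition of $\aprank$.

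For the reverse inequality $\aprank(B) \leq \ell$, I would argue that for every $k$ with $\ell < k \leq n-1$ we have $a_k = \tt N$; this is where I invoke the $\tt NN$ Theorem (Theorem \ref{thm:N implies all N}) together with the fact that $a_\ell$ is the \emph{last} non-$\tt N$ entry. Actually the cleaner route: by the choice of $\ell$, every $a_k$ with $k > \ell$ equals $\tt N$, which by definition of the apr-sequence means that \emph{all} almost-principal minors of order $k$ vanish for every such $k$. Consequently no nonsingular almost-principal submatrix of $B$ has order exceeding $\ell$, so $\aprank(B) \leq \ell$. Combining the two inequalities yields $\aprank(B) = \ell$.

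I do not expect a genuine obstacle here — the only subtlety is the degenerate case where the sequence is all $\tt N$s (no last $\tt A$ or $\tt S$), which must be handled by the convention that both the ``index of the last $\tt A$ or $\tt S$'' and $\aprank$ of a diagonal matrix are $0$; this is consistent with Observation \ref{N...}. One should also note explicitly that the indices $k$ range only up to $n-1$ because an almost-principal submatrix of an $n \times n$ matrix has order at most $n-1$, so the apr-sequence already records every relevant order.
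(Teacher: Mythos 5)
Your argument is correct: it is exactly the definitional bookkeeping the paper has in mind, which is why the paper states this as an Observation with no written proof. Both directions (a nonzero almost-principal minor of order $\ell$ gives $\aprank(B)\geq\ell$; all entries beyond index $\ell$ being $\tt N$ gives $\aprank(B)\leq\ell$) and your handling of the all-$\tt N$ case via the convention $\aprank(B)=0$ match the intended reasoning, and you rightly note that the $\tt NN$ Theorem is not actually needed.
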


\begin{obs}
Let $B \in \Fnn$ be symmetric.
Then $\aprank(B) \leq \rank(B)$.
\end{obs}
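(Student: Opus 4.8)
The plan is to rely on the elementary principle that passing to a submatrix never increases the rank, together with the definition of the ap-rank. First I would handle the degenerate case: if $\aprank(B) = 0$ there is nothing to prove (ranks are nonnegative), so I may assume $\aprank(B) = k \geq 1$. By the definition of the almost-principal rank, there exist index sets $\alpha, \beta \subseteq \{1,2,\dots,n\}$ with $|\alpha| = |\beta| = k$ and $|\alpha \cap \beta| = k-1$ such that the almost-principal submatrix $B[\alpha,\beta]$ is nonsingular.

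The key step is then to observe that $B[\alpha,\beta]$ is a $k \times k$ submatrix of $B$ obtained by deleting rows and columns, so $\rank(B[\alpha,\beta]) \leq \rank(B)$; since $B[\alpha,\beta]$ is nonsingular, $k = \rank(B[\alpha,\beta]) \leq \rank(B)$. As $k$ was an arbitrary order of a nonsingular almost-principal submatrix, taking the maximum over all such $\alpha,\beta$ yields $\aprank(B) \leq \rank(B)$.

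If one prefers an argument that makes the symmetric structure explicit, an alternative route is to note that $B[\alpha,\beta]$ sits inside the $(k+1)\times(k+1)$ principal submatrix $B[\alpha \cup \beta]$, whence $\rank(B[\alpha\cup\beta]) \geq k$; invoking Theorem~\ref{thm: rank of a symm mtx}, which says the rank of a symmetric matrix equals the order of a largest nonsingular principal submatrix, applied to $B[\alpha\cup\beta]$, produces a nonsingular principal submatrix of $B$ of order at least $k$, so again $\rank(B) \geq k$. Either way there is no real obstacle; the only points requiring a moment's care are the trivial case $k = 0$ and the (standard) fact that deleting rows and columns cannot increase the rank.
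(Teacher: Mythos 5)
Your argument is correct and is exactly the reasoning the paper leaves implicit: the Observation is stated without proof precisely because a nonsingular $k\times k$ almost-principal submatrix is in particular a $k\times k$ submatrix, so $\rank(B)\geq k$. Your alternative route through $B[\alpha\cup\beta]$ and Theorem~\ref{thm: rank of a symm mtx} is also fine but unnecessary for this bound.
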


\begin{obs}\label{aprank=0}
Let $B \in \Fnn$ be symmetric.
Then $\aprank(B) = 0$ if and only if $B$ is a diagonal matrix.
\end{obs}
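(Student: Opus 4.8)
The plan is to prove both directions straight from the definition of $\aprank$, treating this as essentially a reformulation of the fact that the $1\times 1$ almost-principal minors of $B$ are exactly its off-diagonal entries.

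For the forward direction, I would first note that if $\alpha,\beta\subseteq\{1,2,\dots,n\}$ satisfy $|\alpha|=|\beta|=1$ and $|\alpha\cap\beta|=|\alpha|-1=0$, then $\alpha=\{i\}$ and $\beta=\{j\}$ with $i\neq j$, so $B[\alpha,\beta]=[\,b_{ij}\,]$ is a single off-diagonal entry. Consequently, if $\aprank(B)=0$ then no such $1\times 1$ almost-principal submatrix can be nonsingular, forcing $b_{ij}=0$ for all $i\neq j$; that is, $B$ is diagonal. (Alternatively, for $n\ge 2$, $\aprank(B)=0$ means every letter of $\apr(B)$ is $\tt N$, in particular the first one, so this direction also follows immediately from Observation \ref{N...}, while the case $n=1$ is trivial.)

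For the converse, suppose $B=[b_{ij}]$ is diagonal and let $B[\alpha,\beta]$ be an arbitrary almost-principal submatrix, so $\alpha=\gamma\cup\{i\}$ and $\beta=\gamma\cup\{j\}$ for some $\gamma$ with $i\neq j$ and $i,j\notin\gamma$. Consider the row of $B[\alpha,\beta]$ indexed by $i$: its entries are $b_{ik}$ for $k\in\beta$, and since $i\notin\beta$ every such index $k$ differs from $i$, so $b_{ik}=0$. Thus $B[\alpha,\beta]$ has a zero row and is singular. As this holds for every almost-principal submatrix of $B$, no almost-principal minor is nonzero, and therefore $\aprank(B)=0$.

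There is no genuine obstacle here; the only point meriting a moment's care is verifying that the defining condition $|\alpha\cap\beta|=|\alpha|-1$ in the order-$1$ case really does single out precisely the off-diagonal positions, and — in the converse — that an index $i\in\alpha\setminus\beta$ always exists, which is exactly what $|\alpha\cap\beta|=|\alpha|-1<|\alpha|$ guarantees.
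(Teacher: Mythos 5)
Your proof is correct and is exactly the routine verification the paper has in mind: it states this as an Observation without proof, and your argument — that the order-$1$ almost-principal minors are precisely the off-diagonal entries (giving the forward direction), and that for a diagonal matrix every almost-principal submatrix $B[\gamma\cup\{i\},\gamma\cup\{j\}]$ has a zero row indexed by $i\in\alpha\setminus\beta$ (giving the converse) — is the standard justification. Nothing is missing; the edge case $n=1$ and the existence of $i\in\alpha\setminus\beta$ are handled appropriately.
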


Since the inverse of a
nonsingular non-diagonal matrix is non-diagonal,
the following fact is deduced easily
from the relationship between a matrix and its adjoint.

\begin{prop}\label{aprank nonsingular}
Let $B \in \Fnn$ be symmetric, non-diagonal and non-singular.
Then $\aprank(B) = n-1$.
\end{prop}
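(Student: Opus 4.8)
The plan is to exploit the classical adjugate (adjoint) identity $B \cdot \operatorname{adj}(B) = \det(B) I_n$, together with the fact that each entry of $\operatorname{adj}(B)$ is, up to sign, an $(n-1)\times(n-1)$ minor of $B$. First I would recall that, since $B$ is symmetric, its $(i,j)$ cofactor equals $(-1)^{i+j}\det B[\{1,\dots,n\}\setminus\{j\},\{1,\dots,n\}\setminus\{i\}]$, and when $i\neq j$ the submatrix $B[\{1,\dots,n\}\setminus\{j\},\{1,\dots,n\}\setminus\{i\}]$ is precisely an almost-principal submatrix of $B$ of order $n-1$ (the two deleted indices $i$ and $j$ differ, so the row index set and column index set of the surviving submatrix agree in $n-2 = (n-1)-1$ positions). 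Hence the off-diagonal entries of $\operatorname{adj}(B)$ are exactly $\pm$ the almost-principal minors of $B$ of order $n-1$.

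The key observation is then the following dichotomy. If \emph{every} almost-principal minor of $B$ of order $n-1$ were zero, then $\operatorname{adj}(B)$ would be a diagonal matrix. But $\operatorname{adj}(B) = \det(B)\, B^{-1}$ and $\det(B) \neq 0$, so $B^{-1}$ would be diagonal, forcing $B = (B^{-1})^{-1}$ to be diagonal as well — contradicting the hypothesis that $B$ is non-diagonal. Therefore at least one almost-principal minor of $B$ of order $n-1$ is nonzero, which by the definition of $\aprank$ gives $\aprank(B) \geq n-1$. Combined with the trivial bound $\aprank(B) \leq n-1$ (an almost-principal submatrix has order at most $n-1$, since two of its index sets must differ), we conclude $\aprank(B) = n-1$.

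I do not anticipate any serious obstacle here; the argument is essentially a two-line consequence of the adjugate formula. The only point requiring a small amount of care is the bookkeeping that identifies off-diagonal cofactors of a symmetric matrix with almost-principal minors of order $n-1$ (the index-deletion count), and the observation — already used elsewhere in the paper, e.g.\ in the proof of Proposition~\ref{A...N and S...N} — that a non-diagonal matrix cannot have a diagonal inverse. An equivalent and perhaps cleaner phrasing avoids the adjugate: if all order-$(n-1)$ almost-principal minors vanish, then by the same reasoning applied to $B^{-1}$ (whose almost-principal minors of order $n-1$ relate to those of $B$ via Jacobi's identity, cf.\ the Inverse Theorem) one sees $B^{-1}$ has apr-sequence ending in $\tt N$, hence is diagonal, hence $B$ is diagonal. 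Either route yields the result in a few lines.
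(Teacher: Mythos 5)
Your argument is correct and is exactly the paper's intended route: the paper dispenses with a formal proof, noting only that the result follows from the adjugate identity together with the fact that the inverse of a nonsingular non-diagonal matrix is non-diagonal, which is precisely the dichotomy you spell out. The index bookkeeping identifying off-diagonal cofactors with order-$(n-1)$ almost-principal minors, and the trivial upper bound $\aprank(B)\leq n-1$, are both handled correctly.
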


As we saw earlier (in Theorem \ref{thm: rank of a symm mtx}),
the rank of a symmetric matrix is equal to the order of a
largest nonsingular principal submatrix ---which led us to
call the rank of such a matrix ``principal.''
A natural question one should ask is whether an
analogous connection exists between
the rank of a symmetric matrix and
the order of a largest nonsingular almost-principal submatrix;
that is, is it the case that the
rank and ap-rank of a symmetric matrix is the same?
Obviously, the answer is negative,
since, for example, for a nonzero diagonal matrix $B$,
$\aprank(B) = 0$, while $\rank(B)>0$.
Moreover, since for an $n \times n$ matrix $B$
we must have $\aprank(B) \leq n-1$,
$\aprank(B) \neq \rank(B)$ if $B$ is nonsingular.
But what can we say if $B$ is non-diagonal and singular?
After establishing the following two lemmas, we show that
if $B$ is symmetric, non-diagonal and singular, and
does not contain a zero row, then $\aprank(B) = \rank(B)$.

\begin{lem}\label{zero row 1}
Let $B \in \Fnn$ be symmetric and singular.
Suppose that $\rank(B) = r$ and $B$ does not contain a zero row.
Let $B[\alpha]$ be an $r \times r$ nonsingular (principal) submatrix of $B$.
Then there exists $p \in \{1,2, \dots, n\} \setminus \alpha$
such that $B[\alpha \cup \{p\}]$ does not contain a zero row.
\end{lem}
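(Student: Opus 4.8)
\textbf{Proof proposal for Lemma \ref{zero row 1}.}

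The plan is to argue by contradiction. Suppose that for every $p \in \{1,2,\dots,n\} \setminus \alpha$, the submatrix $B[\alpha \cup \{p\}]$ contains a zero row. Since $B[\alpha]$ is nonsingular, none of its rows is zero; hence, for each such $p$, the only row of $B[\alpha \cup \{p\}]$ that can be zero is the row indexed by $p$. Therefore $B[\{p\}, \alpha \cup \{p\}] = 0$, i.e., $b_{pp} = 0$ and $b_{pq} = 0$ for all $q \in \alpha$. By symmetry, $b_{qp} = 0$ for all $q \in \alpha$ as well. In other words, each index $p$ outside $\alpha$ has the property that row $p$ and column $p$ of $B$ are supported entirely within $\{1,\dots,n\}\setminus\alpha$, and $b_{pp}=0$.

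Let $\beta = \{1,2,\dots,n\} \setminus \alpha$. The observations above say precisely that $B = B[\alpha] \oplus B[\beta]$ (the off-diagonal blocks $B[\alpha,\beta]$ and $B[\beta,\alpha]$ vanish), and moreover that $B[\beta]$ has zero diagonal. Now I use the rank hypothesis: $\rank(B) = r = |\alpha|$, and $B[\alpha]$ already contributes rank $r$, so $\rank(B[\beta]) = 0$, i.e., $B[\beta] = O$. But then every row of $B$ indexed by an element of $\beta$ is a zero row of $B$; since $B$ is singular, $\beta$ is nonempty, so $B$ contains a zero row, contradicting the hypothesis. This contradiction completes the proof.

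The main (and essentially only) obstacle is the bookkeeping in the first paragraph: correctly deducing that the unique candidate zero row of $B[\alpha \cup \{p\}]$ is row $p$, and then using symmetry to conclude that both row $p$ and column $p$ of $B$ are zero outside $\beta$. Once that is in place, the block decomposition $B = B[\alpha] \oplus B[\beta]$ and the rank count force $B[\beta] = O$, and the zero-row contradiction is immediate. No deep machinery is needed — only Theorem \ref{thm: rank of a symm mtx} (that $\rank$ is realized on a principal submatrix, which we are already given) and the additivity of rank on a direct sum.
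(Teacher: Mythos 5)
Your proof is correct and follows essentially the same route as the paper's: assume every $B[\alpha\cup\{p\}]$ has a zero row, deduce the block decomposition $B = B[\alpha]\oplus B[\beta]$, and use additivity of rank to force $B[\beta]=O$, contradicting the no-zero-row hypothesis. The only difference is that you spell out why the zero row must be row $p$ (nonsingularity of $B[\alpha]$), a detail the paper leaves implicit.
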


\bpf
Without loss of generality, we may assume that
$\alpha = \{1,2, \dots, r\}$.
Suppose to the contrary that
the matrix $B[\alpha \cup \{p\}]$ contains a zero row
for all $p \in \{1,2, \dots, n\} \setminus \alpha$
(since $B$ is singular,
$\{1,2, \dots, n\} \setminus \alpha$ is nonempty).
It follows that $B=B[\alpha] \oplus C$,
where $C$ is an $(n-r) \times (n-r)$
matrix with zero diagonal.
Now, observe that
$\rank(B) = \rank(B[\alpha]) + \rank(C)$.
Then, as $\rank(B[\alpha]) = \rank(B)$, it follows that
$\rank(C) = 0$, implying that $C= O_{n-r}$,
which contradicts the fact that $B$ does not contain a zero row.
\epf

\begin{lem}\label{zero row 2}
Let $B \in \Fnn$ be symmetric.
Suppose that
$\apr(B) = a_1 a_2 \cdots a_{n-2}\tt N$ and
$\rank(B) = n-1$.
Then $B$ contains a zero row.
\end{lem}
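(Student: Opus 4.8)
Let $B \in \Fnn$ be symmetric. Suppose that $\apr(B) = a_1 a_2 \cdots a_{n-2}\tt N$ and $\rank(B) = n-1$. Then $B$ contains a zero row.

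\section*{Proof plan for Lemma \ref{zero row 2}}

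The plan is to work with the adjugate (classical adjoint) matrix $\operatorname{adj} B$. Since $\rank(B) = n-1 < n$, the matrix $B$ is singular, so $B \cdot \operatorname{adj} B = \det(B)\, I_n = O_n$. The idea is to show that the hypothesis on order-$(n-1)$ almost-principal minors forces $\operatorname{adj} B$ to be \emph{diagonal}, that the rank hypothesis forces $\operatorname{adj} B$ to have exactly \emph{one} nonzero entry, and then that the relation $B \cdot \operatorname{adj} B = O_n$ pins down precisely which row (and column) of $B$ is zero.

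First I would unwind the hypothesis $\apr(B) = a_1 a_2 \cdots a_{n-2}{\tt N}$, i.e.\ that every almost-principal minor of order $n-1$ vanishes. Any $(n-1)\times(n-1)$ submatrix of an $n\times n$ matrix is $B[\{1,\dots,n\}\setminus\{g\},\, \{1,\dots,n\}\setminus\{d\}]$ for some $g,d$, and it is almost-principal precisely when $g \neq d$; in that case its determinant is, up to sign, an off-diagonal entry of $\operatorname{adj} B$. Hence the hypothesis says exactly that every off-diagonal entry of $\operatorname{adj} B$ is zero, i.e.\ $\operatorname{adj} B$ is diagonal. Next I would invoke the classical fact (valid over any field) that $\rank(B) = n-1$ implies $\rank(\operatorname{adj} B) = 1$: the columns of $\operatorname{adj} B$ lie in the one-dimensional null space of $B$, while some $(n-1)\times(n-1)$ minor of $B$ is nonzero. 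A diagonal matrix of rank $1$ has exactly one nonzero diagonal entry, say the $(j_0,j_0)$-entry, equal to some $c \neq 0$.

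To finish, I would read off the $j_0$-th column of the identity $B \cdot \operatorname{adj} B = O_n$: since $\operatorname{adj} B$ is diagonal with $j_0$-th diagonal entry $c$, this column equals $c$ times the $j_0$-th column of $B$, so the $j_0$-th column of $B$ is zero; symmetry of $B$ then gives that its $j_0$-th row is zero, which is the desired conclusion. The argument is essentially a direct computation, so I do not expect a genuine obstacle; the one place to be careful is the bookkeeping in the second step --- correctly identifying order-$(n-1)$ almost-principal minors with the off-diagonal entries of $\operatorname{adj} B$ and confirming that the almost-principal condition is exactly $g\neq d$ --- together with citing the standard rank-of-adjugate fact so that the argument is valid over an arbitrary field $F$ (it also covers the small case $n=2$ without modification).
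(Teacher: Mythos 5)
Your proposal is correct and follows essentially the same route as the paper's proof: the vanishing of all order-$(n-1)$ almost-principal minors makes $\operatorname{adj}B$ diagonal, and the identity $B\cdot\operatorname{adj}B=O_n$ (valid since $B$ is singular) then forces the corresponding column, hence row, of $B$ to be zero. The only cosmetic difference is how a nonzero diagonal entry of $\operatorname{adj}B$ is located: you invoke the standard fact that $\rank(B)=n-1$ implies $\rank(\operatorname{adj}B)=1$, whereas the paper uses the principality of the rank of a symmetric matrix to exhibit a nonsingular $(n-1)\times(n-1)$ principal submatrix; both are valid over an arbitrary field.
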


\bpf
Suppose that $B=[b_{ij}]$.
Since $\apr(B)$ ends with $\tt N$,
the adjoint of $B$ is a diagonal matrix, and is of rank one, since $\rank(B)=n-1$.
Hence it follows that $Be_i=0$ for some standard basis vector $e_i$. 
Thus, the $i$th column (and hence row) 
of $B$ is zero. 
\epf

\begin{thm}\label{aprank singular}
Let $B \in \Fnn$ be a symmetric, non-diagonal, singular matrix
not containing a zero row.
Then $\aprank(B) = \rank(B)$.
\end{thm}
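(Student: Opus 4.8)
The plan is to prove the nontrivial inequality $\aprank(B) \geq \rank(B)$, since $\aprank(B) \leq \rank(B)$ holds in general. Set $r = \rank(B)$; because $B$ is non-diagonal, $r \geq 1$, and because $B$ is singular, $r \leq n-1$. Since the rank of a symmetric matrix is principal (Theorem \ref{thm: rank of a symm mtx}), $B$ has an $r \times r$ nonsingular principal submatrix $B[\alpha]$. The first step is to apply Lemma \ref{zero row 1} to obtain an index $p \in \{1,2,\dots,n\} \setminus \alpha$ such that $B' := B[\alpha \cup \{p\}]$ contains no zero row. Since $B'$ contains the nonsingular submatrix $B[\alpha]$ and is itself a submatrix of $B$, we get $\rank(B') = r = (r+1) - 1$; moreover $r + 1 \geq 2$, so $\apr(B') = a'_1 a'_2 \cdots a'_r$ is defined.

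The second step is to observe that $a'_r \neq {\tt N}$. If instead $a'_r = {\tt N}$, then $\apr(B')$ ends in ${\tt N}$ while $\rank(B') = (r+1)-1$, so Lemma \ref{zero row 2} (applied to the $(r+1) \times (r+1)$ matrix $B'$) would force $B'$ to contain a zero row, contradicting the choice of $p$. Hence $a'_r \in \{{\tt A}, {\tt S}\}$, so $B'$ has a nonsingular $r \times r$ almost-principal submatrix; this submatrix is also an almost-principal submatrix of $B$, whence $\aprank(B) \geq r$. Together with $\aprank(B) \leq r$ this yields $\aprank(B) = \rank(B)$.

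The substance of the argument lies in the two supporting lemmas rather than in this reduction. Lemma \ref{zero row 1} lets us enlarge a maximal nonsingular principal submatrix by a single index without introducing a zero row: otherwise $B$ would decompose as $B[\alpha] \oplus C$ with $C$ of zero diagonal, and rank-additivity of direct sums together with maximality of $\rank(B[\alpha])$ would force $C$ to be the zero matrix. Lemma \ref{zero row 2} is the genuine structural input: for a corank-one symmetric matrix all of whose largest almost-principal minors vanish, the adjugate is a diagonal matrix of rank one, hence supported in a single diagonal entry $d_{kk} \neq 0$; since it annihilates the singular matrix, the $k$th column and, by symmetry, the $k$th row must be zero. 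I expect the bulk of the work (already carried out in the excerpt) to be exactly these two lemmas; granting them, the theorem follows in the couple of lines sketched above.
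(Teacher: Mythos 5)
Your proposal is correct and follows essentially the same route as the paper's proof: reduce to producing a nonsingular $r \times r$ almost-principal submatrix, enlarge a maximal nonsingular principal submatrix $B[\alpha]$ via Lemma \ref{zero row 1} to a corank-one principal submatrix $B'$ with no zero row, and invoke Lemma \ref{zero row 2} to rule out $\apr(B')$ ending in $\tt N$. The only differences are cosmetic (explicit remarks that $r \geq 1$ and $r+1 \geq 2$), so nothing further is needed.
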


\bpf
Since $B$ is a non-diagonal matrix, $n \geq 2$.
Let $r=\rank(B)$.
Since $\aprank(B) \leq r$,
it suffices to show that $B$ contains a
nonsingular $r \times r$ almost-principal submatrix.
Since the rank of $B$ is principal,
$B$ contains a nonsingular $r \times r$ principal submatrix,
say, $B[\alpha]$.
By Lemma \ref{zero row 1},
there exists $p \in \{1,2, \dots, n\} \setminus \alpha$ such that
$B' := B[\alpha \cup \{p\}]$ does not contain a zero row.
Since $\rank(B) = r$,
and because $B'$ contains the
nonsingular $r \times r$ matrix $B[\alpha]$,
$\rank(B') = r$.
Let $\apr(B') = a'_1a'_2 \cdots a'_r$.
Since $B'$ is a singular $(r+1) \times (r+1)$ (symmetric) matrix
with $\rank(B') = r$, and because $B'$ does not contain a zero row,
Lemma \ref{zero row 2} implies that
$a'_r \neq \tt N$.
Hence, $B'$ contains a nonsingular, $r \times r$, almost-principal submatrix.
Then, as every almost-principal submatrix of $B'$ is also an
almost-principal submatrix of $B$, the desired conclusion follows.
\epf

Although the rank and ap-rank of a
symmetric matrix $B$ are not always the same,
the rank cannot exceed the ap-rank by
more than one if $B$ is a non-diagonal matrix:

\begin{thm}\label{ap-rank}
Let $B \in \Fnn$ be symmetric and non-diagonal. 
Define the parameter 
$t:=\max\{|\alpha|: B[\alpha] \mbox{ does not contain a zero row}\}$,
and
let $B'$ be the
$t\times t$ principal submatrix of $B$ not containing a zero row.
Then $\rank(B) - 1 \leq \aprank(B) \leq \rank(B)$.
Moreover,
$\aprank(B) = \rank(B)$
if and only if
$B'$ is singular.
Equivalently,
$\aprank(B) = \rank(B)-1$
if and only if
$B'$ is nonsingular.
\end{thm}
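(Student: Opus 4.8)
The plan is to reduce the whole statement to the single matrix $B'$, after first observing that $B'$ is nothing but $B$ with its zero rows and columns removed. First I would let $\delta \subseteq \{1,\dots,n\}$ be the set of indices of the nonzero rows of $B$ and check that $\delta$ is the unique largest index set for which $B[\delta]$ has no zero row: if $B[\alpha]$ has no zero row then each row $B[\{k\},\alpha]$ with $k\in\alpha$ is nonzero, so $k\in\delta$ and $\alpha\subseteq\delta$; conversely if $k\in\delta$, picking $\ell$ with $b_{k\ell}\neq 0$ and using symmetry of $B$ gives $\ell\in\delta$, so $B[\{k\},\delta]\neq 0$. Hence $t=|\delta|$ and $B'=B[\delta]$ is well-defined. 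Since the rows and columns of $B$ indexed outside $\delta$ vanish, $B$ is permutationally similar to $B'\oplus O_{n-t}$, so $\rank(B')=\rank(B)$; moreover $B'$ is symmetric, has no zero row, and is non-diagonal, because any nonzero off-diagonal entry $b_{ij}$ of $B$ forces $i,j\in\delta$.

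Next I would show $\aprank(B)=\aprank(B')$. One inequality is immediate, since any almost-principal submatrix of $B'=B[\delta]$ is an almost-principal submatrix of $B$. For the other, take a nonsingular almost-principal submatrix $B[\alpha\cup\{i\},\alpha\cup\{j\}]$ of $B$ with $i\neq j$; being nonsingular, it has no zero row and no zero column, and --- this is where symmetry is essential --- the nonzero row indexed by $i$ forces row $i$ of $B$ to be nonzero, the nonzero column indexed by $j$ forces column $j$, hence row $j$, of $B$ to be nonzero, and each row indexed by $k\in\alpha$ forces row $k$ of $B$ to be nonzero. Thus $\alpha\cup\{i,j\}\subseteq\delta$ and this submatrix already sits inside $B'$, giving $\aprank(B)\le\aprank(B')$.

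Finally I would split into two cases. If $B'$ is nonsingular, then $t=\rank(B')=\rank(B)$, and Proposition~\ref{aprank nonsingular} applied to the non-diagonal nonsingular matrix $B'$ yields $\aprank(B)=\aprank(B')=t-1=\rank(B)-1$. If $B'$ is singular, then $B'$ is symmetric, non-diagonal, singular and has no zero row, so Theorem~\ref{aprank singular} gives $\aprank(B)=\aprank(B')=\rank(B')=\rank(B)$. The two cases are exhaustive and mutually exclusive and produce the two distinct values $\rank(B)-1$ and $\rank(B)$, so all three assertions follow at once: the bounds $\rank(B)-1\le\aprank(B)\le\rank(B)$, the statement that $\aprank(B)=\rank(B)$ if and only if $B'$ is singular, and the equivalent reformulation in terms of $\rank(B)-1$. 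The main obstacle, such as it is, is the verification in the second paragraph that a nonsingular almost-principal submatrix cannot reach into a zero row of $B$ --- this is precisely where the symmetry hypothesis does the work; the rest is bookkeeping together with the two previously established results.
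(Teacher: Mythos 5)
Your proposal is correct and takes essentially the same route as the paper's proof: reduce everything to the submatrix $B'$ (since $B$ is permutationally similar to $B' \oplus O_{n-t}$, both the rank and the ap-rank are preserved), then apply Proposition~\ref{aprank nonsingular} when $B'$ is nonsingular and Theorem~\ref{aprank singular} when $B'$ is singular. The only difference is that you explicitly verify what the paper leaves implicit, namely that $B'$ is well-defined and that $\aprank(B)=\aprank(B')$ because a nonsingular almost-principal submatrix cannot meet a zero row or column of $B$.
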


\bpf
Since $B$ is symmetric and non-diagonal,
it is immediate that $n \geq 2$ and $t \geq 2$.
Without loss of generality, we may assume that
$B' = B[1,2, \dots,t]$.
Thus, $B=B' \oplus O_{n-t}$.
Then, as $\rank(B) = \rank(B')$ and $\aprank(B) = \aprank(B')$,
it suffices to show that the desired conclusions hold for
the case with $B=B'$ (that is, the case with $t=n$);
thus, we assume that $B=B'$.
If $B$ is nonsingular, then, by Proposition \ref{aprank nonsingular},
$\aprank(B) = n-1 = \rank(B)-1$.
If $B$ is singular, then, by Theorem \ref{aprank singular},
$\aprank(B)=\rank(B)$.
It follows that
$\aprank(B)= \rank(B)-1$ or $\aprank(B)=\rank(B)$, implying that
$\rank(B) - 1 \leq \aprank(B) \leq \rank(B)$, as desired.

The remaining two statements, and their equivalency, is established easily using the above arguments in this proof and the fact that
$\rank(B) - 1 \leq \aprank(B) \leq \rank(B)$.
\epf

Although for a given symmetric matrix $B \in \Fnn$ we must have
$0\leq \rank(B) - \apr(B) \leq 1$ if $B$ is non-diagonal,
$\rank(B) - \apr(B)$ can attain any integer value on
the closed interval $[0,n]$ if $B$ is a diagonal matrix, since, for example,
$\rank(B) - \apr(B) = r$ if $B$ is a diagonal matrix with $\rank(B) = r$.

\section{Concluding remarks}\label{s: final}
$\null$
\indent
Given that the only difference between 
the epr- and apr-sequence is that 
the former depends on principal minors, 
while the latter depends on almost-principal minors,
it is worthwhile to compare the state of affairs for 
epr- and apr-sequences.
Although the apr-sequence was just introduced (in the present paper),
we already have a better understanding of 
this sequence than of the epr-sequence:
The epr-sequences of symmetric matrices over 
the field of order $2$ were 
completely characterized in \cite{XMR-Char 2};
however, for any other field, no such characterization exists. 
In Section \ref{s: No As}, 
the sequences not containing any $\tt A$s that 
are realized as the apr-sequence of a 
symmetric matrix over an arbitrary field $\F$ were 
completely characterized.
Moreover, in Section \ref{s: No As},
a necessary condition for a sequence to
be the apr-sequence of a symmetric matrix over a field $\F$ was presented.
It is clear, then, that our understanding of 
apr-sequences is already better than that of epr-sequences.

As stated in Section \ref{s: intro},
one of our motivations for introducing the
ap-rank and apr-sequence of a symmetric matrix was
answering Question \ref{qpr question},
which asks if we should attribute the fact that
neither $\tt NA$ nor $\tt NS$ can occur as a subsequence of
the qpr-sequence of a symmetric matrix $B \in \Fnn$ entirely to
the dependence of qpr-sequences on almost-principal minors.
The following remark answers Question \ref{qpr question},
under the assumption that $n \geq 3$
(the question is trivial when $n \leq 2$).

\begin{rem}
\normalfont
Let $n \geq 3$, $\F$ be a field and $B \in \Fnn$ be symmetric.
Then the fact that
neither $\tt NA$ nor $\tt NS$ can occur as a subsequence of 
$\qpr(B)$ is attributed entirely to the dependence of 
$\qpr(B)$ on almost-principal minors 
if and only if
$B$ is a non-diagonal  matrix for which there does not exist a generalized permutation matrix such that 
$P^TBP = T^p_q$ when $p \geq 2$, 
where
$T^p_q$ is the $n \times n$ matrix
\[
T^p_q:=\underbrace{A(K_2) \oplus A(K_2) \oplus \cdots \oplus A(K_2)}_{\mbox{$p$ times}} \oplus O_{q}
\in \Fnn.
\]
We now establish the previous statement. 
First, suppose that $B$ is a diagonal matrix with $\rank(B)=r$.
If $B$ is nonsingular, then
$\epr(B) = \tt AAA\overline{A}$.
If $B$ is singular, then
$\epr(B) = \tt \overline{S} \hspace{0.4mm}\overline{N}$,
with $\tt S$ occurring $r$ times and $\tt N$ occurring $n-r$ times.
Since $r$ is equal to the index of the last $\tt A$ or $\tt S$ in $\qpr(B)$
(see Observation \ref{qpr rank}),
neither $\tt NA$ nor $\tt NS$ is a subsequence of $\qpr(B)$,
regardless of what $\apr(B)$ is;
hence, the fact that neither $\tt NA$ nor $\tt NS$ is a subsequence of 
$\qpr(B)$ is not attributed entirely to the dependence of 
$\qpr(B)$ on almost-principal minors.

Now, suppose that $B$ is a non-diagonal matrix for which 
there exists a generalized permutation matrix $P$ such that 
$P^TBP=T^p_q$ for some $p \geq 2$.
Then either
$\epr(B)=\tt NS\overline{NS}NA$ (if $q=0$) or
$\epr(B)=\tt NS\overline{NS}\hspace{0.4mm}\overline{N}$ (if $q \geq 1$),
with $\tt \overline{N}$ containing $q$ copies of $\tt N$.
Moreover,
$\apr(B) = \tt SNS\overline{NS} \hspace{0.4mm} \overline{N}$,
with $\tt \overline{N}$ containing $q$ copies of $\tt N$
(see Proposition \ref{SN}).
Then, as $\rank(B)=n-q$,
and because $\rank(B)$ is equal to
the index of the last $\tt A$ or $\tt S$ in $\qpr(B)$,
$\qpr(B) = \tt SSS\overline{S} \hspace{0.4mm} \overline{N}$.
It is easy to see that the fact that neither $\tt NA$ nor $\tt NS$ is a
subsequence of $\qpr(B)$ is attributed to {\em both}
the principal and the almost-principal minors of $B$.

Finally, 
suppose that $B$ is a non-diagonal matrix for which 
there does not exist a generalized permutation matrix such that 
$P^TBP = T^p_q$ when $p \geq 2$.
Let $\qpr(B)=q_1q_2 \cdots q_n$.
It suffices to present an argument based solely on
almost-principal minors for the fact that
if $q_k= \tt N$ for some $k$,
then $q_j = \tt N$ for all $j \geq k$;
we present one based on the ap-rank and apr-sequence of $B$:
Suppose that $q_k= \tt N$ for some $k$.
Let $\apr(B)=a_1a_2 \cdots a_{n-1}$.
If $k =n$, then there is nothing to prove;
thus, assume that $k \leq n-1$.
Obviously, $a_k = \tt N$.
We now show that
$\apr(B)$ does not contain $\tt NA$ nor $\tt NS$ as a subsequence.
If it was the case that $\apr(B)$ contained
$\tt NA$ or $\tt NS$ as a subsequence, then
Theorem \ref{necessary condition} would imply that
$\apr(B) = \tt SNS\overline{NS} \hspace{0.4mm} \overline{N}$,
and, then, Proposition \ref{SN} would imply that
there exists a generalized permutation matrix such that
$P^TBP=T^p_q$ for some $p \geq 2$, leading to a contradiction.
Hence, neither $\tt NA$ nor $\tt NS$ is a subsequence of $\apr(B)$.
It follows that $a_j = \tt N$ for all $j \geq k$, and, therefore, that
$\aprank(B) \leq k-1$.
Then, as $B$ is non-diagonal,
Theorem \ref{ap-rank} implies that $\rank(B) \leq k$.
Since $\rank(B)$ is equal to the index of
the last $\tt A$ or $\tt S$ in $\qpr(B)$ (see Observation \ref{qpr rank}),
$q_j = \tt N$ for all $j \geq k+1$.
Then, as $q_k = \tt N$, the desired conclusion follows.
\end{rem}

\subsection*{Acknowledgments}
$\null$
\indent
The research of the first author was supported in part by an
NSERC Discovery Research Grant RGPIN-2014-06036.
The authors express their gratitude to 
a referee for their careful review, and for 
many helpful and constructive comments on a 
previous version which greatly improved the presentation of the paper.
Moreover, they thank a referee for bringing 
Example \ref{Bordering Example 1} to their attention.

\end{document}